\documentclass[12pt]{amsart}
\usepackage{amscd,amssymb,graphics,color,a4wide,hyperref,verbatim}
\usepackage{graphicx}
\usepackage{psfrag} 
\usepackage{marvosym}

%\usepackage{pslatex}
%\usepackage{showlabels}
%\renewcommand{\showlabelfont}{\tiny\rm\ttfamily}

% change the commentation of the following two lines
% if the mathrsfs package is not installed
\usepackage{mathrsfs}
\input xy
\xyoption{all}

\footskip 1cm
\textheight 22.5cm

%%%%%%%%%%%%%%%%%%%%%%%%%%%%%%%%%%%%%%%%%
%%% Installation of Calligraphic letters
%%%%%%%%%%%%%%%%%%%%%%%%%%%%%%%%%%%%%%%%%
\DeclareFontFamily{U}{rsf}{}
\DeclareFontShape{U}{rsf}{m}{n}{
  <5> <6> rsfs5 <7> <8> <9> rsfs7 <10->  rsfs10}{}
\DeclareMathAlphabet{\mathscr}{U}{rsf}{m}{n}

%%%%%%%%%%%%%%%%%%%%%%%%%%%%%%%%%%%%%%%%%
%%% End of the installation of the
%%% Calligraphic letters
%%%%%%%%%%%%%%%%%%%%%%%%%%%%%%%%%%%%%%%%%

\newtheorem{theorem}{Theorem}[section]
\newtheorem{lemma}[theorem]{Lemma}
\newtheorem{proposition}[theorem]{Proposition}

\newtheorem{xca}[theorem]{Exercise}
\newtheorem{conjecture}[theorem]{Conjecture}

\newtheorem{question}[theorem]{Question}
\theoremstyle{definition}
\newtheorem{definition}[theorem]{Definition}
\newtheorem{construction}[theorem]{Construction}
\newtheorem{example}[theorem]{Example}
\newtheorem{examples}[theorem]{Examples}

\theoremstyle{remark}

\numberwithin{equation}{section}
%===========================================================

\newcommand {\Scatter} {\operatorname{{\mathsf{S}}}}
\newcommand {\fod}  {\mathfrak{d}}

\newcommand{\NN} {\mathbb{N}}
\newcommand{\ZZ} {\mathbb{Z}}
\newcommand{\QQ} {\mathbb{Q}}
\newcommand{\RR} {\mathbb{R}}
\newcommand{\VV} {\mathbb{V}}
\newcommand{\CC} {\mathbb{C}}

\newcommand{\PP} {\mathbb{P}}
\renewcommand{\AA} {\mathbb{A}}

\newcommand {\shL} {\mathcal{L}}
\newcommand {\shM} {\mathcal{M}}

\newcommand {\shO} {\mathcal{O}}

\newcommand {\shT} {\mathcal{T}}

\newcommand {\shP} {\mathcal{P}}

\newcommand {\shX} {\mathcal{X}}

\newcommand {\foD} {\mathfrak{D}}

\newcommand {\foM} {\mathfrak{M}}

%===========================================================

\renewcommand {\Bar} {\operatorname{Bar}}

\newcommand {\coker} {\operatorname{coker}}

\newcommand {\dual} {{\vee}}

\newcommand {\gp} {{\operatorname{gp}}}

\newcommand {\Hom} {\operatorname{Hom}}

\newcommand {\id} {\operatorname{id}}

\renewcommand {\Im} {\operatorname{Im}}

\newcommand {\Int} {\operatorname{Int}}

\renewcommand {\ker } {\operatorname{ker}}

\newcommand {\M} {\mathcal{M}}

\renewcommand{\O} {\mathcal{O}}

\newcommand {\out} {\mathrm{out}}

\renewcommand{\P} {\mathscr{P}}

\newcommand {\rank} {\operatorname{rank}}

\newcommand {\Sing} {\operatorname{Sing}}

\newcommand {\Spec} {\operatorname{Spec}}

\newcommand {\Supp} {\operatorname{Supp}}

\newcommand {\Trop} {\mathrm{Trop}}

\newcommand{\bbfamily}{\fontencoding{U}\fontfamily{bbold}\selectfont}
\newcommand{\textbb}[1]{{\bbfamily#1}}
\newcommand {\lfor} {\mbox{\textbb{[}}}
\newcommand {\rfor} {\mbox{\textbb{]}}}
%====================================================
%Mark's definitions.
\newcommand {\T} {\shT}
\newcommand {\X} {\shX}

\newcommand {\cM} {\mathscr{M}}
\newcommand {\U} {\mathscr{U}}

\def\mydate{\ifcase\month \or January\or February\or March\or
April\or May\or June\or July\or August\or September\or October\or 
November\or December\fi \space\number\day,\space\number\year}
%=========================================================

%widths

\newlength{\picwidth} \setlength{\picwidth}{.75\textwidth}
\newlength{\miniwidth} \setlength{\miniwidth}{.5\textwidth}
\newlength{\nanowidth} \setlength{\nanowidth}{.33\textwidth}
\newlength{\melowidth} \setlength{\melowidth}{.88\textwidth}
\newlength{\leftminiwidth} \setlength{\leftminiwidth}{.45\textwidth}
\newlength{\rightminiwidth} \setlength{\rightminiwidth}{.45\textwidth}
\newlength{\minipagewidth} \setlength{\minipagewidth}{.45\textwidth}
%=======================================================

%Set enumeration

\begin{document}
%====================================================
%More of Mark's definitions.
\def\mapright#1{\smash{
 \mathop{\longrightarrow}\limits^{#1}}}
\def\mapleft#1{\smash{
 \mathop{\longleftarrow}\limits^{#1}}}
\def\exact#1#2#3{0\to#1\to#2\to#3\to0}
\def\mapup#1{\Big\uparrow
  \rlap{$\vcenter{\hbox{$\scriptstyle#1$}}$}}
\def\mapdown#1{\Big\downarrow
  \rlap{$\vcenter{\hbox{$\scriptstyle#1$}}$}}
\def\dual#1{{#1}^{\scriptscriptstyle \vee}}
\def\invlim{\mathop{\rm lim}\limits_{\longleftarrow}}
\def\rto{\raise.5ex\hbox{$\scriptscriptstyle ---\!\!\!>$}}
\def\cy{\check y}
%===========================================================
\input epsf.tex

\title[The Strominger-Yau-Zaslow conjecture]{
Mirror symmetry and the Strominger-Yau-Zaslow conjecture}

\author{Mark Gross} 
\address{UCSD Mathematics, 9500 Gilman Drive, La Jolla, CA 92093-0112, USA}
\email{mgross@math.ucsd.edu}
\thanks{This work was partially supported by NSF grant
1105871}

\subjclass[2000]{14J32}
\date{}
\begin{abstract}
We trace progress and thinking about the Strominger-Yau-Zaslow conjecture
since its introduction in 1996. We begin with the original differential
geometric conjecture and its refinements, and explain how insights gained
in this context led to the algebro-geometric program developed by the author
and Siebert. The objective of this program is
to explain mirror symmetry by studying degenerations
of Calabi-Yau manifolds. This introduces
logarithmic and tropical geometry into the mirror symmetry story,
and gives a clear path towards a conceptual understanding of mirror
symmetry within an algebro-geometric context. After explaining the overall
philosophy, we explain how recent results fit into
this program.
\end{abstract}
\maketitle
\bigskip

%===========================================================
\section*{Introduction.}

Mirror symmetry got its start in 1989 with work of Greene and Plesser
\cite{GrPl} and Candelas, Lynker and Schimmrigk \cite{CLS}. These
two works first observed the existence of pairs of Calabi-Yau
manifolds exhibiting an exchange of Hodge numbers. Recall that by Yau's
proof of the Calabi conjecture \cite{Yau}, a Calabi-Yau manifold
is an $n$-dimensional complex manifold $X$ with a nowhere vanishing 
holomorphic $n$-form
$\Omega$ and a Ricci-flat K\"ahler metric with K\"ahler form $\omega$.
Ricci-flatness is equivalent to $\omega^n=C \Omega\wedge\bar\Omega$
for a constant $C$.

The most famous example of a Calabi-Yau manifold is a smooth quintic
three-fold $X\subseteq\PP^4$. 
The Hodge numbers of $X$ are $h^{1,1}(X)=1$ and
$h^{1,2}(X)=101$, with topological Euler characteristic $-200$. 
The original construction of Greene and Plesser gave a mirror to
$X$, as follows. 
Let $Y\subseteq \PP^4$ be given by the equation
\[
x_0^5+\cdots+x_4^5=0,
\]
and let 
\[
G=\{(a_0,\ldots,a_4)\in\ZZ_5^5\,|\, \sum_i a_i=0\}.
\]
An element $(a_0,\ldots,a_4)\in G$ acts on $Y$ by
\[
(x_0,\ldots,x_4)\mapsto (\xi^{a_0}x_0,\ldots,\xi^{a_4}x_4)
\]
for $\xi$ a primitive fifth root of unity. The quotient $Y/G$
is highly singular, but there is a resolution of singularities
$\check X\rightarrow Y/G$ such that $\check X$ is also Calabi-Yau, and
one finds $h^{1,1}(\check X)=101$ and $h^{1,2}(\check X)=1$, so that $\check
X$ has topological Euler characteristic $200$.

The relationship between these two Calabi-Yau manifolds proved to be much
deeper than just this exchange of Hodge numbers. 
Pioneering work of Candelas,
de la Ossa, Greene and Parkes \cite{COGP} performed an amazing calculation,
following string-theoretic predictions which suggested
that certain enumerative calculations on $X$ should give
the same answer as certain period calculations on $\check X$. The calculations
on $\check X$, though subtle, could be carried out: these involved integrals
of the holomorphic form on $\check X$ over three-cycles as the complex
structure on $\check X$ is varied. On the other hand, the corresponding
calculations on $X$ involved numbers of rational curves on $X$ of each
degree. For example, the number of lines on a generic quintic threefold
is $2875$ and the number of conics is $609250$. String theory thus gave
predictions for these numbers for every degree, an astonishing feat given
that most of these numbers seemed far beyond the reach of algebraic
geometry at the time.

More generally, string theory introduced the concepts of the \emph{$A$-model}
and \emph{$B$-model}. The $A$-model involves the symplectic geometry
of Calabi-Yau manifolds. Properly defined, the counts of rational curves
are in fact symplectic invariants, now known as Gromov-Witten invariants.
The $B$-model, on the other hand, involves the complex geometry of
Calabi-Yau manifolds. Holomorphic forms of course depend on the complex
structure, so the period calculations mentioned above can be thought of
as $B$-model calculations. Ultimately, string theory predicts an isomorphism
between the $A$-model of a Calabi-Yau manifold $X$ and the $B$-model
of its mirror, $\check X$. The equality of numerical invariants is then
a consequence of this isomorphism.

Proofs of these string-theoretic predictions of curve-counting
invariants were given
by Givental \cite{Givental} and Lian, Liu and Yau \cite{LLY}, with successively
simpler proofs by many other researchers. However, all the proofs relied
on the geometry of the ambient space $\PP^4$ in which the quintic is contained.
Roughly speaking, one considers all rational curves in $\PP^4$, and tries
to understand how to compute how many of these are contained in a given
quintic hypersurface.

This raised the question: \emph{is there some underlying intrinsic
geometry to mirror symmetry?}  

Historically the first approach to an intrinsic formulation
of mirror symmetry is Kontsevich's
Homological Mirror Symmetry conjecture, stated in 1994 in \cite{KHMS}. This
made mathematically precise the notion of an isomorphism between the
$A$- and $B$-models.
The homological mirror symmetry conjecture posits an isomorphism
between two categories, 
the Fukaya category of Lagrangian
submanifolds of $X$ (the $A$-model) and 
the derived category of coherent
sheaves on the mirror $\check X$ (the $B$-model).
Morally, this states that the 
symplectic geometry of $X$ is the same as the complex geometry of
$\check X$. At the time this conjecture was made, however,
there was no clear idea
as to how such an isomorphism might be realised, nor did this conjecture
state how to construct mirror pairs.

The second approach is due to Strominger, Yau
and Zaslow in their 1996 paper \cite{SYZ}.
They made a remarkable proposal, based on new ideas in string theory,
which gave a very concrete geometric interpretation for mirror
symmetry. 

Let me summarize, very roughly, the physical argument they used
here. Developments
in string theory in the mid-1990s introduced the notion of \emph{Dirichlet
branes}, or $D$-branes. These are submanifolds of space-time, with
some additional data, which
serve as boundary conditions for open strings, i.e., we allow
open strings to propagate with their endpoints constrained to lie on
a $D$-brane. Remembering that space-time, according to string theory, 
looks like
$\RR^{1,3}\times X$, where $\RR^{1,3}$ is ordinary space-time and
$X$ is a Calabi-Yau three-fold, we can split a $D$-brane into a product
of a submanifold of $\RR^{1,3}$ and one on $X$. It turned out, simplifying
a great deal, that there are 
two particular types of submanifolds on $X$ of interest: 
\emph{holomorphic} $D$-branes,
i.e., holomorphic submanifolds with a holomorphic line bundle, and 
\emph{special Lagrangian} $D$-branes, which are 
\emph{special Lagrangian submanifolds} with 
flat $U(1)$-bundle: 

\begin{definition} Let $X$ be an $n$-dimensional
Calabi-Yau manifold with $\omega$
the K\"ahler form of a Ricci-flat metric on $X$ and $\Omega$
a nowhere vanishing holomorphic $n$-form. Then a submanifold $M\subseteq
X$ is \emph{special Lagrangian} if it is Lagrangian, i.e., 
$\dim_{\RR} M=\dim_{\CC} X$
and $\omega|_M=0$, and in addition, $\Im\Omega|_M=0$.
\end{definition}

Holomorphic $D$-branes can be viewed as $B$-model objects, and special
Lagrangian $D$-branes as $A$-model objects. The isomorphism between 
the $B$-model on $X$ and the $A$-model on $\check X$ then suggests
that the moduli space of holomorphic $D$-branes on $X$
should be isomorphic to the moduli space of special Lagrangian $D$-branes 
on $\check X$. (This is now seen as a physical manifestation of the
homological mirror symmetry conjecture).
Now $X$ itself is the moduli space of points on $X$. So each
point on $X$ should correspond to a pair $(M,\nabla)$, where $M\subseteq\check
X$ is a special Lagrangian submanifold and $\nabla$ is a flat 
$U(1)$-connection on $M$. 

A theorem of McLean \cite{McLean}
tells us that the tangent space to the moduli space of
special Lagrangian deformations of a special Lagrangian submanifold $M\subseteq
\check X$ is $H^1(M,\RR)$. Of course, 
the moduli space of flat $U(1)$-connections
modulo gauge equivalence on $M$ is the torus $H^1(M,\RR)/H^1(M,\ZZ)$.
In order for this moduli space to be of the correct dimension, we need
$\dim H^1(M,\RR)=n$, the complex dimension of $X$. This suggests that 
$X$ consists of a family of tori which are dual to a family of special 
Lagrangian tori on $\check X$. An elaboration of this argument yields the
following conjecture:

\begin{conjecture}
\emph{The Strominger-Yau-Zaslow conjecture}. If $X$ and $\check X$ are
a mirror pair of Calabi-Yau $n$-folds, then there exists fibrations
$f:X\rightarrow B$ and $\check f:\check X\rightarrow B$ whose fibres
are special Lagrangian, with general fibre an $n$-torus. Furthermore,
these fibrations are dual, in the sense that canonically
$X_b=H^1(\check X_b,\RR/\ZZ)$
and $\check X_b=H^1(X_b,\RR/\ZZ)$ whenever $X_b$ and $\check X_b$ 
are non-singular tori.
\end{conjecture}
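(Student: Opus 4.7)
The plan is to derive the conjecture as a mathematical consequence of the expected isomorphism between D-brane moduli spaces on $X$ and on $\check X$, under the assumption that this correspondence, a physical precursor to homological mirror symmetry, is realised at the level of moduli. I would take as the starting point that a point $x\in X$ defines the simplest B-model object on $X$, namely the skyscraper sheaf $\O_x$ together with the trivial line bundle, so the mirror isomorphism must send such a point to an A-model object $(M_x,\nabla_x)$ on $\check X$, where $M_x\subseteq \check X$ is special Lagrangian and $\nabla_x$ is a flat $U(1)$-connection on $M_x$. This produces a set-theoretic map $f:X\to \shM$, where $\shM$ is the moduli space of such pairs.

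The key step is a dimension count combined with a topological identification of the fibre. By McLean's theorem, the local special Lagrangian deformations of $M\subseteq \check X$ are unobstructed with tangent space canonically $H^1(M,\RR)$, while the moduli of flat $U(1)$-connections modulo gauge is $H^1(M,\RR)/H^1(M,\ZZ)$, a torus of the same dimension. Hence $\dim_\RR \shM=2\,b_1(M)$ near a smooth pair, which must equal $2n=\dim_\RR X$, forcing $b_1(M)=n$. Since $M$ is a compact oriented real $n$-manifold on which $\Omega$ is nowhere vanishing, McLean's isomorphism between normal deformations and harmonic $1$-forms, together with the fact that the trivial connection is always present, leads one to argue that the generic $M$ must be a torus $T^n$. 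The map $f$ then factors as $X\to B$, where $B$ parametrises the special Lagrangian tori, with fibre over $[M]$ naturally identified with $H^1(M,\RR)/H^1(M,\ZZ)$.

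The duality assertion is obtained by applying the symmetric argument: points of $\check X$ are B-branes on $\check X$, and their mirrors are special Lagrangian branes on $X$, producing a fibration $\check f:\check X\to B'$. Matching the two constructions via the mirror isomorphism forces $B=B'$ and identifies, over a point $b\in B$ represented by a smooth torus $M=\check X_b$, the points of $X_b$ with flat $U(1)$-connections on $M$, i.e.\ $X_b=H^1(\check X_b,\RR/\ZZ)$. The converse identity follows by reversing the roles of $X$ and $\check X$.

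The main obstacle is that the entire argument rests on a conjectural moduli-level realisation of mirror symmetry that is only justified physically. Even granting this, serious mathematical issues remain: the moduli space of special Lagrangians is generically non-compact and subject to wall-crossing, so $\shM$ cannot naively be identified with all of $X$; the base $B$ is only a topological manifold away from a large singular discriminant locus (where the fibres degenerate into singular special Lagrangians); and the fibrewise duality breaks down over this discriminant. These difficulties are what force the introduction of quantum corrections, instanton contributions, and passage to the large complex structure limit, which motivate the subsequent refinements of the conjecture developed in the remainder of the paper.
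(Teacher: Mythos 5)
This is a conjecture, not a theorem; the paper offers no proof, only the informal physical heuristic preceding the conjecture's statement, and your proposal reproduces that heuristic essentially verbatim: points of $X$ correspond under the $A$/$B$-model identification to pairs $(M,\nabla)$ of special Lagrangians with flat $U(1)$-connection on $\check X$, McLean's theorem and the moduli of flat connections force $b_1(M)=n$ by dimension count, the generic fibre is argued to be a torus, and symmetry gives the dual fibration. You correctly flag that the passage from $b_1(M)=n$ to $M\cong T^n$, the identification of the D-brane moduli space with all of $X$, and the behaviour over the discriminant are all gaps that make this a motivating argument rather than a proof — exactly the stance the paper itself takes, with Sections~5 onward devoted to the limiting and algebro-geometric reformulations needed precisely because the literal statement is not expected to hold.
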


This conjecture motivated a great deal of work in the five years following
its introduction in 1996, 
some of which will be summarized in the following sections. There was
a certain amount of success, as we shall see, with the conjecture proved
for some cases, including the quintic three-fold, at the topological level.
Further, the conjecture gave a solid framework for thinking about
mirror symmetry at an intuitive level. However, work of Dominic Joyce 
demonstrated that the conjecture was unlikely to be literally true. 
Nevertheless, it is possible that weaker limiting forms of the conjecture
still hold.

In the first several sections of this survey,
I will clarify the conjecture, review what is known about it, and state
a weaker form which seems accessible. Most importantly, I will explain
how the SYZ conjecture leads to the study of affine manifolds (manifolds
with transition functions being affine linear) and hence to an algebro-geometric
interpretation of the conjecture, developed by me and Bernd Siebert.
This removes the hard analysis, and gives a powerful framework for understanding
mirror symmetry at a conceptual level. 

The bulk of the paper is devoted to outlining this framework as developed
over the last ten years. I explain how affine manifolds are related
to degenerations of Calabi-Yau manifolds. Once one begins to consider
degenerations, log geometry of K.\ Kato and Fontaine--Illusie comes
into the picture. Conjecturally, the base of the SYZ fibration
incorporates key combinatorial information about log structures on degenerations
of Calabi-Yau manifolds. Log geometry then gives a connection with tropical
geometry and log Gromov-Witten theory, which theoretically allows a description
of $A$-model curve counting using tropical geometry. On the mirror side,
we explain how again tropical geometry is used to describe complex structures.
This identifies tropical geometry as the geometry underlying both sides
of mirror symmetry, and guides us towards a conceptual understanding
of mirror symmetry. We end with a description of recent work with
Pandharipande and Siebert \cite{GPS}  
which provides a snapshot of the relationship between the two sides of
mirror symmetry.

\medskip

I would like to thank the organizers of Current Developments in Mathematics
2012 for inviting me to take part in the conference, and 
Bernd Siebert, my collaborator on much of the work described here. 
Some of the material appearing in this article
was first published in my article 
``The Strominger-Yau-Zaslow conjecture: From
 torus fibrations to degenerations," in \emph{Algebraic Geometry: Seattle 2005},
edited 
by D. Abramovich, et al., Proceedings of Symposia in Pure Mathematics 
Vol. 80, part 1, 149-192, published by the American Mathematical Society. (c) 
2009 by the American Mathematical Society. Finally, I would like to thank
Lori Lejeune and the Clay Institute for Figure \ref{vanishingdisk}.

\bigskip

\section{Moduli of special Lagrangian submanifolds}
\label{modulisection}

The first step in understanding the SYZ conjecture is to 
examine the structures which arise on the base of a special Lagrangian
fibration. These structures arise from McLean's theorem on the moduli
space of special Lagrangian submanifolds \cite{McLean}, and these
structures and their relationships were explained by Hitchin in \cite{Hit}. 
We outline some of these ideas here.
McLean's theorem says that the moduli space of deformations of a compact
special Lagrangian submanifold
of a compact 
Calabi-Yau manifold $X$ is unobstructed. Further, the
tangent space at the point of
moduli space corresponding to a special Lagrangian $M\subseteq
X$ is canonically isomorphic to
the space of harmonic $1$-forms on $M$. This isomorphism is seen
explicitly as follows. Let $\nu\in\Gamma(M,N_{M/X})$ be a normal
vector field to $M$ in $X$. Then the restriction of the contractions
$(\iota(\nu)\omega)|_M$ and
$(\iota(\nu)\Im\Omega)|_M$ are both seen to be well-defined forms
on $M$: one needs to lift $\nu$ to a vector field but the choice is
irrelevant because $\omega$ and $\Im\Omega$ restrict to zero on $M$.
McLean shows that if $M$ is special Lagrangian then
\[
\iota(\nu)\Im\Omega=-*\iota(\nu)\omega,
\]
where $*$ denotes the Hodge star operator on $M$. Furthermore,
$\nu$ corresponds to an infinitesimal deformation preserving the
special Lagrangian condition if and only if $d(\iota(\nu)\omega)
=d(\iota(\nu)\Im\Omega)=0$. This gives the correspondence between
harmonic $1$-forms and infinitesimal special Lagrangian deformations.

Let $f:X\rightarrow B$ be a special Lagrangian fibration
with torus fibres, and assume for now that all fibres of $f$ are non-singular.
Then we obtain three structures on $B$, namely 
two affine structures and a metric, as we shall now see.

\begin{definition} 
\label{affine}
Let $B$ be an $n$-dimensional manifold.
An {\it affine structure} on $B$ is given by an atlas $\{(U_i,\psi_i)\}$
of coordinate charts $\psi_i:U_i\rightarrow \RR^n$,
whose transition functions $\psi_i\circ\psi_j^{-1}$ lie in ${\rm Aff}(\RR^n)$.
We say the affine structure is \emph{tropical} if the transition functions
lie in $\RR^n\rtimes GL(\ZZ^n)$, i.e., have integral linear part. We say
the affine structure is {\it integral} if the transition functions
lie in ${\rm Aff}(\ZZ^n)$. 

If an affine manifold $B$ carries a Riemannian metric $g$, then we say
the metric is \emph{affine K\"ahler} or \emph{Hessian} if $g$ is locally
given by $g_{ij}=\partial^2K/\partial y_i\partial y_j$ for some convex
function $K$ and $y_1,\ldots,y_n$ affine coordinates.
\end{definition}

We obtain the three structures as follows:

\emph{Affine structure 1.} For a normal vector field $\nu$ to a fibre $X_b$
of $f$, $(\iota(\nu)\omega)|_{X_b}$ is a well-defined $1$-form on $X_b$,
and we can compute its periods as follows.
Let $U\subseteq B$ be a small open set,
and suppose we have submanifolds $\gamma_1,\ldots,\gamma_n\subseteq
f^{-1}(U)$ which are families of 1-cycles over $U$ and such that
$\gamma_1\cap X_b,\ldots,\gamma_n\cap X_b$ 
form a basis for $H_1(X_b,\ZZ)$ for each
$b\in U$. Consider the $1$-forms $\omega_1,\ldots,\omega_n$ on $U$
defined by fibrewise integration:
\[
\omega_i(\nu)=\int_{X_b\cap\gamma_i} \iota(\nu)\omega,
\]
for $\nu$ a tangent vector on $B$ at $b$, which we can lift
to a normal vector field of $X_b$. We have $\omega_i=f_*(\omega|_{\gamma_i})$,
and since $\omega$ is closed, so is $\omega_i$. Thus there are locally
defined functions $y_1,\ldots,y_n$ on $U$ with $dy_i=\omega_i$. 
Furthermore, these functions are well-defined up to the choice of basis
of $H_1(X_b,\ZZ)$ and constants. Finally, they give well-defined coordinates,
as follows from the fact that 
$\nu\mapsto \iota(\nu)\omega$ yields an isomorphism of $\T_{B,b}$ with
$H^1(X_b,\RR)$ by McLean's theorem. Thus $y_1,\ldots,y_n$ define local
coordinates of a tropical affine structure on $B$.

\emph{Affine structure 2.} We can play the same trick with $\Im\Omega$: 
choose submanifolds 
\[
\Gamma_1,\ldots,\Gamma_n\subseteq f^{-1}(U)
\]
which are families of $n-1$-cycles over $U$ and such that
$\Gamma_1\cap X_b,\ldots,\Gamma_n\cap X_b$ form a basis for $H_{n-1}(X_b,
\ZZ)$. We define $\lambda_i$ by $\lambda_i=-f_*(\Im\Omega|_{\Gamma_i})$,
or equivalently,
\[
\lambda_i(\nu)=-\int_{X_b\cap\Gamma_i} \iota(\nu)\Im\Omega.
\]
Again $\lambda_1,\ldots,\lambda_n$ are closed $1$-forms, with
$\lambda_i=d\check y_i$ locally, and again $\check y_1,\ldots,\check
y_n$ are affine coordinates for a tropical affine structure on $B$.

\emph{The McLean metric.} The Hodge metric on $H^1(X_b,\RR)$ is given
by 
\[
g(\alpha,\beta)=\int_{X_b} \alpha\wedge *\beta
\]
for $\alpha$, $\beta$ harmonic $1$-forms, and hence induces a
metric on $B$, which can be written as 
\[
g(\nu_1,\nu_2)=-\int_{X_b}\iota(\nu_1)\omega\wedge \iota(\nu_2)\Im\Omega.
\]

\medskip

A crucial observation of Hitchin \cite{Hit} is that these structures are
related by the Legendre transform:

\begin{proposition}
\label{hessianmetric}
Let $y_1,\ldots,y_n$ be local affine coordinates
on $B$ with respect to the affine structure induced by $\omega$. 
Then locally there is a function $K$ on $B$
such that
\[
g(\partial/\partial y_i,\partial/\partial y_j)=\partial^2 K/\partial y_i
\partial y_j.
\]
Furthermore, $\cy_i=\partial K/\partial y_i$
form a system of affine coordinates with respect to the affine
structure induced by $\Im\Omega$, and if
\[
\check K(\cy_1,\ldots,\cy_n)=\sum \cy_i y_i-K(y_1,\ldots,y_n)
\]
is the Legendre transform of $K$, then
\[
y_i=\partial \check K/\partial\cy_i
\]
 and
\[
\partial^2\check K/\partial \cy_i\partial \cy_j=g(\partial/\partial\cy_i,
\partial/\partial\cy_j).
\]
\end{proposition}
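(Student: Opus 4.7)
The plan is to reduce the proposition to the single identity
\[
\frac{\partial \cy_j}{\partial y_i}=g\left(\frac{\partial}{\partial y_i},\frac{\partial}{\partial y_j}\right).
\]
Once this is established, the existence of $K$ is immediate from the Poincaré lemma (using symmetry of $g$), and the Legendre transform statements are purely formal manipulations.

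First, I would compute the Jacobian in terms of fibrewise integrals. Let $\nu_i$ denote the normal vector field to $X_b$ lifting $\partial/\partial y_i$, characterized by the condition that $\iota(\nu_i)\omega|_{X_b}$ is the harmonic $1$-form with $\int_{\gamma_k\cap X_b}\iota(\nu_i)\omega=\delta_{ki}$. By definition $d\cy_j=\lambda_j$, so
\[
\frac{\partial \cy_j}{\partial y_i}=\lambda_j(\partial/\partial y_i)=-\int_{X_b\cap\Gamma_j}\iota(\nu_i)\Im\Omega.
\]
The crucial step is to choose the cycle families $\Gamma_j$ compatibly with the $\gamma_k$: namely, pick $\Gamma_j$ so that on each fibre $X_b$ the Poincaré-dual cohomology class $[\Gamma_j\cap X_b]^{\vee}\in H^1(X_b,\RR)$ coincides with $[\iota(\nu_j)\omega|_{X_b}]$. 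This is possible because the classes $[\iota(\nu_j)\omega|_{X_b}]$ form precisely the basis of $H^1(X_b,\RR)$ dual to $[\gamma_k\cap X_b]$, and one is free to choose an integral basis $\{[\Gamma_j\cap X_b]\}$ of $H_{n-1}(X_b,\ZZ)$ realizing the corresponding Poincaré-dual integral classes.

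With this choice, Poincaré duality on $X_b$ converts the fibre integral to a wedge-product integral over $X_b$, giving (up to a universal sign depending on $n$)
\[
\frac{\partial \cy_j}{\partial y_i}=-\int_{X_b}\iota(\nu_i)\Im\Omega\wedge\iota(\nu_j)\omega=-\int_{X_b}\iota(\nu_j)\omega\wedge\iota(\nu_i)\Im\Omega,
\]
which is exactly $g(\partial/\partial y_i,\partial/\partial y_j)$ by the formula recalled for the McLean metric (the fact that this last integral is symmetric in $i,j$ also follows from McLean's identity $\iota(\nu)\Im\Omega=-\ast\iota(\nu)\omega$ and symmetry of the Hodge inner product). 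Hence the Jacobian matrix $(\partial\cy_j/\partial y_i)$ is symmetric, so the $1$-form $\sum_i \cy_i\,dy_i$ is closed on $U$; after shrinking $U$ if necessary, write it as $dK$, yielding $\cy_i=\partial K/\partial y_i$ and $g_{ij}=\partial^2 K/\partial y_i\partial y_j$.

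For the Legendre transform, setting $\check K=\sum \cy_iy_i-K$ and differentiating in $\cy_i$ gives $\partial\check K/\partial\cy_i=y_i$, since the contributions involving $\partial y_j/\partial\cy_i$ cancel by $\cy_j=\partial K/\partial y_j$. Differentiating once more gives $\partial^2\check K/\partial\cy_i\partial\cy_j=\partial y_i/\partial\cy_j$, and this is the inverse matrix of $(g_{ij})$, which is exactly $g(\partial/\partial\cy_i,\partial/\partial\cy_j)$ by the standard transformation of a metric under a change of coordinates. The main obstacle I expect is purely bookkeeping: aligning the orientations of $\gamma_i$ and $\Gamma_j$ and tracking the signs coming from the definition $\lambda_j=-f_\ast(\Im\Omega|_{\Gamma_j})$, from McLean's $\iota(\nu)\Im\Omega=-\ast\iota(\nu)\omega$, and from the $(-1)^{n-1}$ in the graded commutativity of $\wedge$ on $X_b$, so that these combine to produce $+g_{ij}$ rather than $-g_{ij}$. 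Apart from this sign accounting, each step is either a direct fibre-integration computation or a standard invocation of Poincaré duality and the Poincaré lemma.
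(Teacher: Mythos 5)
Your proposal is correct and follows essentially the same route as the paper: you compute $\partial\check y_j/\partial y_i = \lambda_j(\partial/\partial y_i)$ as a fibre integral, choose the cycle families $\Gamma_j$ to be Poincar\'e dual to the $\gamma_j$ (your condition $[\Gamma_j\cap X_b]^\vee = [\iota(\nu_j)\omega|_{X_b}] = \gamma_j^*$ is exactly the paper's normalization $(\gamma_i\cap X_b)\cdot(\Gamma_j\cap X_b)=\delta_{ij}$), convert the period to a cup-product integral over $X_b$, identify it with the McLean metric, and then close with symmetry, the Poincar\'e lemma, and the formal Legendre computations. The only stylistic difference is that the paper packages this via the dual bases $\gamma_i^*,\Gamma_i^*$ of $R^1f_*\ZZ$ and $R^{n-1}f_*\ZZ$ and writes out $\lambda_i=\sum_j g_{ij}\,dy_j$ explicitly, whereas you compute the Jacobian entry directly; the substance, including the sign bookkeeping you flag, is the same.
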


\begin{proof}
Take families $\gamma_1,\ldots,\gamma_n,\Gamma_1,\ldots,\Gamma_n$
as above
over an open neighbourhood $U$ with the two bases being
Poincar\'e dual, i.e., $(\gamma_i\cap X_b)\cdot(\Gamma_j\cap X_b)=
\delta_{ij}$ for $b\in U$.
Let $\gamma_1^*,\ldots,\gamma_n^*$ and $\Gamma_1^*,\ldots,\Gamma_n^*$
be the dual bases for $\Gamma(U,R^1f_*\ZZ)$ and $\Gamma(U,R^{n-1}f_*\ZZ)$
respectively. From the choice of $\gamma_i$'s, we get local coordinates
$y_1,\ldots,y_n$ with $dy_i=\omega_i$, so in particular
\[
\delta_{ij}=\omega_i(\partial/\partial y_j)=\int_{\gamma_i\cap X_b}
\iota(\partial/\partial y_j)\omega,
\]
hence $\iota(\partial/\partial y_j)\omega$ defines the cohomology
class $\gamma_j^*$ in $H^1(X_b,\RR)$. Similarly,
let
\[
g_{ij}=-\int_{\Gamma_i\cap X_b}\iota(\partial/\partial y_j)\Im
\Omega;
\]
then $-\iota(\partial/\partial y_j)\Im\Omega$ defines the cohomology
class $\sum_i g_{ij}\Gamma_i^*$ in $H^{n-1}(X_b,\RR)$, and
$\lambda_i=\sum_j g_{ij}dy_j$.
Thus
\begin{eqnarray*}
g(\partial/\partial y_j,\partial/\partial y_k)&=&
-\int_{X_b}\iota(\partial/\partial y_j)\omega
\wedge \iota(\partial/\partial y_k)\Im\Omega\\
&=&g_{jk}.
\end{eqnarray*}
On the other hand, let $\cy_1,\ldots,\cy_n$ be coordinates with
$d\cy_i=\lambda_i$. Then
\[
{\partial\cy_i/\partial y_j}=g_{ij}=g_{ji}={\partial\cy_j/
\partial y_i},
\]
so $\sum\cy_i dy_i$ is a closed 1-form. Thus there exists
locally a function $K$ such that $\partial K/\partial y_i=\cy_i$ and
$\partial^2 K/\partial y_i\partial y_j=g(\partial/\partial y_i,
\partial/\partial y_j)$.
A simple calculation then confirms that $\partial\check K/\partial \cy_i
=y_i$. On the other hand,
\begin{eqnarray*}
g(\partial/\partial\cy_i,\partial/\partial\cy_j)&=&
g\left(\sum_k {\partial y_k\over\partial\cy_i}{\partial\over
\partial y_k},\sum_\ell {\partial y_\ell\over\partial\cy_j}
{\partial\over\partial y_\ell}\right)\\
&=&\sum_{k,\ell}{\partial y_k\over\partial\cy_i}{\partial y_\ell\over\partial
\cy_j} g(\partial/\partial y_k,\partial/\partial y_\ell)\\
&=&\sum_{k,\ell} {\partial y_k\over\partial\cy_i}{\partial y_\ell\over
\partial\cy_j}{\partial\cy_k\over\partial y_\ell}\\
&=&{\partial y_j\over\partial\cy_i}={\partial^2\check K\over
\partial\cy_i\partial\cy_j}.
\end{eqnarray*}
\end{proof}

Thus we introduce the notion of the \emph{Legendre transform} of an
affine manifold with a multi-valued convex function.

\begin{definition}
\label{multivaluedconvex}
Let $B$ be an affine manifold. A \emph{multi-valued} function $K$
on $B$ is a collection of functions on an open cover $\{(U_i,K_i)\}$
such that on $U_i\cap U_j$, $K_i-K_j$ is affine linear. We say $K$
is \emph{convex} if the Hessian
$(\partial^2 K_i/\partial y_j\partial y_k)$ is positive definite for
all $i$, in any, or equivalently all, affine coordinate systems $y_1,
\ldots,y_n$.

Given a pair $(B,K)$ of affine manifold and convex multi-valued
function, the \emph{Legendre transform} of $(B,K)$ is a pair $(\check B,
\check K)$ where $\check B$ is an affine structure on the underlying
manifold of $B$ with coordinates given locally
by $\check y_i=\partial K/\partial y_i$, and $\check K$ is defined
by
\[
\check K_i(\check y_1,\ldots,\check y_n)=\sum \check y_j y_j
-K_i(y_1,\ldots,y_n).
\]
\end{definition}

\begin{xca}
Check that $\check K$ is also convex, and that the Legendre transform
of $(\check B,\check K)$ is $(B,K)$.
\end{xca}

Curiously, this Legendre transform between affine manifolds
with Hessian metric seems to have first appeared
in a work in statistics predating mirror symmetry, see \cite{Amari}.

\section{Semi-flat mirror symmetry}
\label{semiflatsection}

Let's forget about special Lagrangian fibrations for the
moment. Instead, we will look at how the structures found on $B$ in the
previous section give a toy
version of mirror symmetry.

\begin{definition} Let $B$ be a tropical affine manifold.
\begin{enumerate}
\item
Denote by $\Lambda\subseteq\T_B$  the local system of lattices
generated locally by $\partial/\partial y_1,\ldots,\partial/\partial y_n$, where
$y_1,\ldots,y_n$ are local affine coordinates. This is well-defined because
transition maps are in $\RR^n\rtimes GL_n(\ZZ)$. Set 
\[
X(B):=\T_B/\Lambda.
\]
This is a torus bundle over $B$. In addition, $X(B)$ carries a complex
structure defined locally as follows. Let $U\subseteq B$ be
an open set with affine coordinates $y_1,\ldots,y_n$, so $\T_U$
has coordinate functions $y_1,\ldots,y_n$, $x_1=dy_1,\ldots,x_n=dy_n$.
Then 
\[
q_j=e^{2\pi i(x_j+iy_j)}
\]
gives a system of holomorphic coordinates on $T_U/\Lambda|_U$, and
the induced complex structure is 
independent of the choice of affine coordinates. This is called the
\emph{semi-flat} complex structure on $X(B)$.

Later we will need a variant of this: for $\epsilon>0$, set
\[
X_{\epsilon}(B):=\T_B/\epsilon\Lambda.
\]
This has a complex structure with coordinates given by
\[
q_j=e^{2\pi i(x_j+iy_j)/\epsilon}.
\]
(As we shall see later, the limit $\epsilon\rightarrow 0$ 
corresponds to a ``large complex structure limit.'')
\item
Define $\check\Lambda\subseteq\T^*_B$ to be the local system of
lattices generated locally by $dy_1,\ldots,dy_n$, with $y_1,\ldots,y_n$
local affine coordinates. Set
\[
\check X(B):=\T^*_B/\check\Lambda.
\]
Of course $\T^*_B$ carries a canonical symplectic structure, and this
symplectic structure descends to $\check X(B)$.
\end{enumerate}
\qed
\end{definition}

We write $f:X(B)\rightarrow B$ and $\check f:\check X(B)\rightarrow B$
for these torus fibrations; these are clearly dual.

Now suppose in addition we have a Hessian metric $g$ on $B$,
with local potential function $K$. Then the following propositions
show that in fact
both $X(B)$ and $\check X(B)$ become K\"ahler manifolds. 

\begin{proposition} $K\circ f$ is a (local) K\"ahler potential on
$X(B)$, defining a K\"ahler form $\omega=2i\partial\bar\partial(K\circ
f)$. This metric is
Ricci-flat if and only if $K$ satisfies
the real Monge-Amp\`ere equation
\[
\det {\partial^2 K\over \partial y_i\partial y_j}=constant.
\]
\end{proposition}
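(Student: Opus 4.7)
The plan is to work in the local holomorphic coordinates $q_j = e^{2\pi i z_j}$ with $z_j = x_j + iy_j$ on $X(B)$, noting that by construction $K\circ f$ depends only on the imaginary parts $y_j$.

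First I will verify that $\omega := 2i\partial\bar\partial(K\circ f)$ is a well-defined global K\"ahler form. The multi-valued nature of $K$ on $B$ is harmless at the level of $\partial\bar\partial$, since on overlaps $K_i - K_j$ is affine linear and hence pluriharmonic. A short chain-rule computation using $y_j = (z_j - \bar z_j)/(2i)$ gives
\[
\omega = \frac{i}{2}\sum_{j,k} K_{jk}\, dz_j\wedge d\bar z_k,\qquad K_{jk}:=\partial^2 K/\partial y_j\partial y_k.
\]
Closedness is automatic, positivity follows immediately from the convexity of $K$, and independence of the chosen integral affine chart on $B$ holds because $\Aff(\ZZ^n)$-transformations conjugate the matrix $(K_{jk})$ by a unimodular matrix; in particular $\det(K_{jk})$ descends to an intrinsic function on $B$.

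Second, for Ricci-flatness I will invoke the standard identity $\operatorname{Ric}(\omega) = -i\partial\bar\partial\log\det(g_{j\bar k})$; in the coordinates above $g_{j\bar k}$ is a constant multiple of $K_{jk}$, so $\log\det(g_{j\bar k})$ and $\log\det(K_{jk})$ differ by a constant. Since this function depends only on $y$, and since for any $\psi(y)$ one has $\partial\bar\partial\psi$ proportional to $\sum (\partial^2\psi/\partial y_j\partial y_k)\,dz_j\wedge d\bar z_k$, Ricci-flatness is equivalent to $\log\det(K_{jk})$ being affine linear in $y$. Absorbing the linear piece into the multi-valued potential $K$ itself then reduces this to the stated real Monge-Amp\`ere condition $\det(K_{jk}) = $ constant. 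The natural normalization is the one for which $\omega^n = C\,\Omega\wedge\bar\Omega$ with $\Omega = dz_1\wedge\cdots\wedge dz_n$, the canonical holomorphic volume form on $X(B)$ (well-defined precisely because the transition functions lie in $\RR^n\rtimes GL_n(\ZZ)$).

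The argument has no deep analytic content; the only obstacle is careful bookkeeping of the numerical factors $1/2$, $1/4$, and $2\pi$ coming from the exponential coordinates and from rewriting real partials in terms of $\partial$ and $\bar\partial$. The sole conceptual subtlety worth flagging is the reduction from the a priori equivalent condition (pluriharmonicity of $\log\det(K_{jk})$, i.e.\ affine linearity in $y$) to the cleaner form $\det(K_{jk}) = $ constant, which is a choice of normalization rather than a genuine analytic difficulty.
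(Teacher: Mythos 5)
Your computation of $\omega = \tfrac{i}{2}\sum K_{jk}\,dz_j\wedge d\bar z_k$ and the verification of well-definedness and positivity match the paper's. The gap is in the Ricci-flatness step. You correctly deduce, from $\operatorname{Ric}(\omega)=-i\partial\bar\partial\log\det(g_{j\bar k})$, that vanishing Ricci curvature is equivalent to $\log\det(K_{jk})$ being affine linear in $y$. But the next move --- ``absorbing the linear piece into the multi-valued potential $K$'' --- does not work: adding an affine linear function to $K$ leaves the Hessian $(K_{jk})$, and hence $\det(K_{jk})$, unchanged. Concretely, $K=a^{-2}e^{a y_1}+\tfrac12 y_2^2$ with $a\neq 0$ has $\det(K_{jk})=e^{a y_1}$ nonconstant, yet $\log\det(K_{jk})=a y_1$ is affine linear, so the curvature vanishes; no redefinition of $K$ preserving the metric makes the determinant constant.

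What the paper uses --- stated in its introduction and invoked in the proof --- is that Ricci-flatness here means $\omega^n = C\,\Omega\wedge\bar\Omega$ for the distinguished holomorphic form $\Omega=dz_1\wedge\cdots\wedge dz_n$ and a constant $C$. Comparing $\omega^n$ with $\Omega\wedge\bar\Omega$ directly produces exactly the factor $\det(K_{jk})$, giving the Monge-Amp\`ere equation with no detour through the curvature formula. On a noncompact $X(B)$ the condition $\omega^n = C\,\Omega\wedge\bar\Omega$ is strictly stronger than $\operatorname{Ric}(\omega)=0$ (the two coincide only when $B$ is compact, where a pluriharmonic function on $X(B)$ is constant); your last paragraph treats the stronger condition as a harmless ``normalization,'' which is precisely where the argument breaks. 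Replacing that paragraph with the direct volume-form comparison closes the gap.
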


\begin{proof}
Working locally with affine coordinates $(y_j)$ and 
complex coordinates 
\[
z_j={1\over 2\pi i}\log q_j=x_j+i y_j,
\]
we compute $\omega=2i\partial\bar\partial(K\circ f)={i\over 2}
\sum {\partial^2 K\over \partial y_j\partial y_k} dz_j\wedge
d\bar z_k$
which is clearly positive. Furthermore,
if $\Omega=dz_1\wedge\cdots\wedge dz_n$, then
$\omega^n$ is proportional to $\Omega\wedge\bar\Omega$ if and only if
$\det (\partial^2 K/\partial y_j\partial y_k)$ is constant.
\end{proof}

We write this K\"ahler manifold as $X(B,K)$.

Dually we have

\begin{proposition}
In local canonical coordinates $y_i,\check x_i$ on $\T^*_B$, the 
complex coordinate functions
$z_j=\check x_j+i\partial K/\partial y_j$ on $\T^*_B$ induce a well-defined
complex structure on $\check X(B)$, with respect to which the canonical
symplectic form $\omega$ is the K\"ahler form of a metric. Furthermore
this metric is Ricci-flat if and only if $K$ satisfies the real
Monge-Amp\`ere equation
\[
\det {\partial^2 K\over \partial y_j\partial y_k}=constant.
\]
\end{proposition}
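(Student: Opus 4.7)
The plan is to mirror the proof of Proposition \ref{hessianmetric}, exchanging the roles of the two affine structures. I need to verify three things: that the proposed complex coordinates descend to a well-defined and globally compatible complex structure on $\check X(B)$, that the canonical symplectic form $\omega$ becomes a K\"ahler form in these coordinates, and that Ricci-flatness is equivalent to the real Monge-Amp\`ere equation.

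First I would check that $z_j = \check x_j + i\,\partial K/\partial y_j$ defines a complex structure on $\check X(B) = \T^*_B/\check\Lambda$. Since $\check\Lambda$ is generated locally by $dy_1,\ldots,dy_n$, the fibre coordinates $\check x_j$ on $\T^*_B$ are well-defined modulo $\ZZ$ in the quotient, so each $z_j$ is well-defined up to addition of a real integer, preserving the complex structure. Under an affine coordinate change $y'_i = \sum_j A_{ij} y_j + b_i$ with $A \in GL(\ZZ^n)$, both $\check x'_i$ and $\partial K/\partial y'_i$ transform by the matrix $(A^{-1})^T$, so $z'_i = \sum_j (A^{-1})_{ji} z_j$ is $\RR$-linear with real coefficients, hence holomorphic. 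Replacing the branch of the multi-valued $K$ by $K + \ell$ with $\ell$ affine linear only shifts $\partial K/\partial y_j$ by a real constant, absorbed in the $\check x$-translation ambiguity.

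Second, I would identify $\omega$ as a K\"ahler form in these coordinates. From
\[
dz_j = d\check x_j + i\sum_k g_{jk}\, dy_k, \qquad g_{jk} = \frac{\partial^2 K}{\partial y_j\partial y_k},
\]
I solve $d\check x_j = \tfrac12(dz_j + d\bar z_j)$ and $dy_k = \tfrac{1}{2i}\sum_j g^{jk}(dz_j - d\bar z_j)$, where $(g^{jk})$ is the inverse Hessian. Substituting into $\omega = \sum_j d\check x_j \wedge dy_j$ and using the symmetry $g^{jk} = g^{kj}$ to annihilate the $dz\wedge dz$ and $d\bar z \wedge d\bar z$ contributions yields
\[
\omega = \frac{i}{2}\sum_{j,k} g^{jk}\, dz_j \wedge d\bar z_k,
\]
which is manifestly the K\"ahler form of a Hermitian metric with positive-definite matrix $(g^{jk})$.

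Third, for Ricci-flatness let $\Omega = dz_1 \wedge \cdots \wedge dz_n$. A direct computation of $\omega^n$ gives, up to a nonzero numerical factor and the sign $(-1)^{n(n-1)/2}$ from reordering, $\det(g^{jk})\cdot \Omega\wedge\bar\Omega = \det(g_{jk})^{-1}\,\Omega\wedge\bar\Omega$. Thus $\omega^n$ is a constant multiple of $\Omega \wedge \bar\Omega$ precisely when $\det(\partial^2 K/\partial y_j\partial y_k)$ is constant. The only step demanding genuine care is the bookkeeping of signs and factors of $i/2$ in this top-form computation; no conceptual obstacle arises, and the entire result is morally the Legendre-transform dual of Proposition \ref{hessianmetric}, so that $(\check X(B), \omega)$ may be identified with the K\"ahler manifold $X(\check B, \check K)$ built from the Legendre transform $(\check B, \check K)$ of $(B,K)$, providing a useful sanity check.
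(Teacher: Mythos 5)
Your proof is correct and follows essentially the same route as the paper's: verify that affine transitions in the $y_j$ (with the induced change in $\check x_j$) produce real-linear, hence holomorphic, transitions in the $z_j$ and invariance under $\check x_j\mapsto \check x_j+1$; compute $\omega=\tfrac{i}{2}\sum g^{jk}\,dz_j\wedge d\bar z_k$; and note $\det(g^{jk})$ constant iff $\det(g_{jk})$ constant. You spell out a couple of points (the $GL(\ZZ^n)$ transformation law and the effect of changing branch of the multivalued $K$) that the paper treats as evident, but the argument is the same; one small phrasing quibble is that a shift of $\partial K/\partial y_j$ by a real constant is harmless not because it is ``absorbed in the $\check x$-translation ambiguity'' but simply because constant translations of $z_j$ are holomorphic and hence do not alter the complex structure.
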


\begin{proof}
It is easy to see that an affine linear change in the coordinates
$y_j$ (and hence an appropriate change in the coordinates $\check x_j$)
results in a linear change of the coordinates $z_j$, so they induce
a well-defined complex structure invariant under $\check x_j\mapsto \check x_j+1$, and hence
a complex structure on $\check X(B)$. Then one computes that
\[
\omega=\sum d\check x_j\wedge dy_j={i\over 2}\sum g^{jk} dz_j\wedge d\bar z_k
\]
where $g_{ij}=\partial^2 K/\partial y_j\partial y_k$. Then the metric
is Ricci-flat if and only if $\det(g^{jk})=constant$, if and only if
$\det(g_{jk})=constant$.
\end{proof}

As before, we call this K\"ahler manifold $\check X(B,K)$.

This motivates the definition

\begin{definition} An affine manifold with metric of Hessian form
is a \emph{Monge-Amp\`ere manifold} if the local potential function $K$
satisfies the Monge-Amp\`ere equation $\det(\partial^2K/\partial y_i\partial
y_j)=constant$.
\end{definition}

Hessian and Monge-Amp\`ere manifolds were first studied by
Cheng and Yau in \cite{ChengYau}.

\begin{xca}
\label{caniso}
Show that the identification of $\T_B$ and $\T^*_B$ given by a Hessian metric
induces a canonical isomorphism $X(B,K)\cong\check X(\check B,\check K)$
of K\"ahler manifolds, where $(\check B,\check K)$ is the Legendre transform
of $(B,K)$.
\end{xca}

There is a key extra parameter which appears in mirror symmetry known as
the $B$-field. This appears as a field in the non-linear sigma model
with Calabi-Yau target space, and is required mathematically to make
sense of mirror symmetry. Mirror
symmetry roughly posits an isomorphism between the complex moduli space of a
Calabi-Yau manifold $X$ and the K\"ahler moduli space of 
$\check X$. If one interprets the K\"ahler moduli space to mean the space
of all Ricci-flat K\"ahler forms on $\check X$, then one obtains only
a real manifold as moduli space, and one needs a complex manifold to
match up with the complex moduli space of $X$. The $B$-field is interpreted
as an element ${\bf B}\in H^2(\check X,\RR/\ZZ)$, and one views
${\bf B}+i\omega$ as a complexified K\"ahler class on $\check X$
for $\omega$ a K\"ahler class on $\check X$.

In the context of our toy version of mirror symmetry, 
we view the $B$-field as an element ${\bf B}\in
H^1(B,\Lambda_{\RR}/\Lambda)$, where $\Lambda_{\RR}=\Lambda\otimes_{\ZZ}
\RR$. This does not quite agree with the above definition of the $B$-field,
as this group does not necessarily coincide with $H^2(\check X,\RR/\ZZ)$. 
However,
in many important cases, such as for simply connected
Calabi-Yau threefolds with torsion-free
integral cohomology, these two groups do coincide. More generally, including
the case of K3 surfaces and abelian varieties, one would
need to pass to generalized
complex structures \cite{HitGen}, \cite{Gual}, \cite{Oren}, \cite{Clay}, 
\cite{Huy}, which we do not wish to do here.

Noting that a section of $\Lambda_{\RR}/\Lambda$ over an open
set $U$ can be viewed as a section of $\T_U/\Lambda|_U$, such a section
acts on $\T_U/\Lambda|_U$ via translation, and this action is in fact
holomorphic with respect to the semi-flat complex structure.
Thus a \v Cech 1-cocycle $(U_{ij},\beta_{ij})$
representing ${\bf B}$ allows us to reglue
$X(B)$ via translations over the intersections $U_{ij}$. This is done
by identifying the open subsets $f^{-1}(U_{ij})\subseteq f^{-1}(U_i)$
and $f^{-1}(U_{ij})\subseteq f^{-1}(U_j)$ via the automorphism
of $f^{-1}(U_{ij})$ given by translation by the section $\beta_{ij}$.
This gives a new
complex manifold $X(B,{\bf B})$. If in addition there is a multi-valued
potential function $K$ defining a metric, these translations preserve
the metric and yield a K\"ahler manifold $X(B,{\bf B},K)$.

\medskip

Thus the full toy version of mirror symmetry is as follows:

\begin{construction}[The toy mirror symmetry construction]
Suppose given an affine manifold $B$ with potential $K$ and $B$-fields ${\bf B}
\in H^1(B,\Lambda_{\RR}/\Lambda)$, $\check {\bf B}\in H^1(B,
\check\Lambda_{\RR}/\check\Lambda)$. It is not difficult to see, and
you will have seen this already if you've done Exercise \ref{caniso}, that
the local system $\check\Lambda$ defined using the affine structure on $B$
is the same as the local system $\Lambda$ defined using the affine
stucture on $\check B$. So we say
the pair
\[
(X(B,{\bf B},K),\check{\bf B})
\]
is mirror to
\[
(X(\check B,\check {\bf B},\check K),\bf B).
\]
\end{construction}

This provides a reasonably fulfilling picture of mirror symmetry
in a simple context. Many more aspects of mirror symmetry can be
worked out in this semi-flat context, see \cite{Leung} and \cite{Clay},
Chapter 6. This semi-flat case is an ideal testing ground for concepts
in mirror symmetry. However,
ultimately this only sheds limited insight into the general case.
The only compact Calabi-Yau manifolds with semi-flat Ricci-flat metric
which arise in this way are complex tori (shown by Cheng and Yau
in \cite{ChengYau}).
To deal with more interesting cases, we need to allow singular fibres,
and hence, singularities in the affine structure of $B$. The existence
of singular fibres are fundamental for the most interesting aspects of
mirror symmetry.

\section{Affine manifolds with singularities}
\label{affsection}

To deal with singular fibres, we define

\begin{definition}
A \emph{(tropical, integral) affine manifold with singularities}
is a $(C^0)$ manifold $B$ with an open subset $B_0\subseteq B$ which carries
a (tropical, integral) affine structure, and such that $\Gamma:=B
\setminus B_0$ is a locally finite union of locally closed submanifolds
of codimension $\ge 2$. 
\end{definition}

Here we will give a relatively simple construction of such affine manifolds
with singularities; a broader class of examples is given in 
\cite{GBB}; see also \cite{HZ} and \cite{HZ3}.

Let $\Delta$ be a reflexive polytope in $M_{\RR}=M\otimes_{\ZZ}\RR$,
where $M=\ZZ^n$. This means that $\Delta$ is a lattice polytope with
a unique interior integral point $0\in\Delta$, and the polar dual
polytope
\[
\nabla:=\{n\in N_{\RR}|\hbox{$\langle m,n\rangle\ge -1$ for all $m\in\Delta$}
\}
\]
is also a lattice polytope.  

Let $B=\partial\Delta$, and let $\P$ be a decomposition of $B$ into
lattice polytopes, i.e., $\P$ is a set of lattice polytopes contained
in $B$ such that (1) $B=\bigcup_{\sigma\in\P} \sigma$; (2) $\sigma_1,\sigma_2
\in \P$ implies $\sigma_1\cap\sigma_2$ lies in $\P$ and is a face
of both $\sigma_1$ and $\sigma_2$; (3) if $\sigma\in\P$, any face of
$\sigma$ lies in $\P$.

We now define a structure of integral affine manifold with singularities
on $B$, with discriminant locus $\Gamma\subseteq B$ defined as follows.
Let $\Bar(\P)$ denote the first barycentric subdivision of
$\P$ and let $\Gamma\subseteq B$ be the union of all simplices of
$\Bar(\P)$ not containing a vertex of $\P$ (a zero-dimensional cell)
or intersecting the interior of a maximal cell of $\P$. Setting
$B_0:=B\setminus\Gamma$, we define an affine structure on $B_0$
as follows. 
$B_0$ has an open cover
\[
\{W_{\sigma}|\hbox{$\sigma\in\P$ maximal}\}\cup
\{W_v|\hbox{$v\in\P$ a vertex}\}
\]
where $W_{\sigma}=\Int(\sigma)$, the interior of $\sigma$, and 
\[
W_v=\bigcup_{\tau\in\Bar(\P)\atop v\in\tau}\Int(\tau)
\]
is the (open) star of $v$ in $\Bar(\P)$. We define an affine chart
\[
\psi_{\sigma}:W_{\sigma}\hookrightarrow\AA_{\sigma}\subseteq N_{\RR}
\]
given by the inclusion of $W_{\sigma}$ in
$\AA_{\sigma}$, which denotes the unique $(n-1)$-dimensional
affine hyperplane in $N_{\RR}$ containing $\sigma$. Also, take
$\psi_v:W_v\rightarrow M_{\RR}/\RR v$ to be the projection.
One checks easily that for
$v\in\sigma$, $\psi_{\sigma}\circ\psi_v^{-1}$ is integral affine linear
(integrality follows from reflexivity of $\Delta$!)
so $B$ is an integral affine manifold with singularities. 

\begin{example}
\label{quintic}
Let $\Delta\subseteq\RR^4$ be the convex hull of the points
\begin{align*}
&(-1,-1,-1,-1),\cr & (4,-1,-1,-1),\cr & (-1,4,-1,-1),\cr & (-1,-1,4,-1),\cr &
(-1,-1,-1,4).
\end{align*}
Choose a triangulation $\P$ of $B=\partial\Delta$ into standard simplices;
this can be done in a regular way so that the restriction of $\P$ to
each two-dimensional face of $\Delta$ is as given by the light lines
in Figure \ref{quinticdisc}.
This gives a discriminant locus $\Gamma$ depicted by the dark lines
in the figure; the line segments coming out of the boundary of
the two-face are meant to illustrate the pieces of discriminant locus
contained in adjacent two-faces. The discriminant locus there
is not contained in the plane of the two-face. In particular, the
discriminant locus is not planar at the vertices of $\Gamma$ on the edges
of $\Xi$ with respect to the affine structure
we define.
Note $\Gamma$ is a trivalent graph, with two types of trivalent
vertices, the non-planar ones just mentioned and the planar vertices
contained in the interior of two-faces.
\begin{figure}
\includegraphics{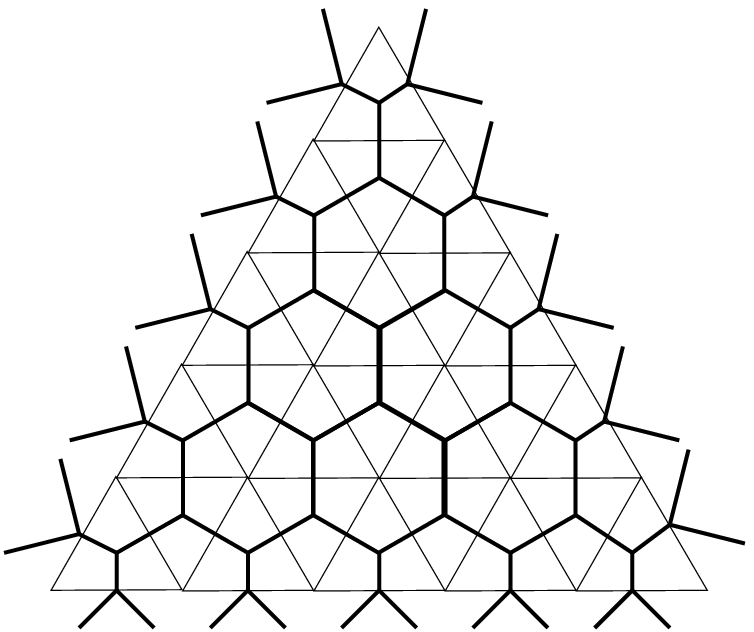}
\caption{}
\label{quinticdisc}
\end{figure}

For an affine manifold, the monodromy of the local system $\Lambda$ 
is an important feature of the affine structure. In this example,
it is very useful to analyze this monodromy around loops about the
discriminant locus. If $v$ is a vertex of $\Gamma$ contained in 
the interior of a two-face of $\Delta$, one can consider loops based
near $v$ in $B_0$ around the three line segments of $\Gamma$ adjacent
to $v$. It is an enjoyable exercise to calculate that these
monodromy matrices
take the form, in a suitable basis,
\[
T_1=\begin{pmatrix} 1&0&0\\1&1&0\\0&0&1\end{pmatrix},
T_2=\begin{pmatrix} 1&0&0\\0&1&0\\1&0&1\end{pmatrix},
T_3=\begin{pmatrix} 1&0&0\\-1&1&0\\-1&0&1\end{pmatrix}.
\]
They are computed by studying the composition of
transition maps between charts that a loop passes through.
These matrices can be viewed as specifying the obstruction to
extending the affine structure across a neighbourhood of $v$ in
$\Gamma$.
Of course, the monodromy
of $\check\Lambda$ is the transpose inverse of these matrices.
Similarly, if $v$ is a vertex of $\Gamma$ contained in an edge of 
$\Delta$, then the monodromy will take the form
\[
T_1=\begin{pmatrix} 1&-1&0\\0&1&0\\0&0&1\end{pmatrix},
T_2=\begin{pmatrix} 1&0&-1\\0&1&0\\0&0&1\end{pmatrix},
T_3=\begin{pmatrix} 1&1&1\\0&1&0\\0&0&1\end{pmatrix}.
\]
So we see that the monodromy of the two types of vertices are interchanged
between $\Lambda$ and $\check\Lambda$. 
\end{example}

One main result of \cite{TMS} is

\begin{theorem}
If $B$ is a three-dimensional tropical
affine manifold with singularities such that $\Gamma$ is
trivalent and the monodromy of $\Lambda$ at each vertex is one of the
above two types, then $f_0:X(B_0)\rightarrow B_0$ can be compactified to
a topological fibration $f:X(B)\rightarrow B$. Dually, $\check f_0:\check X(B_0)
\rightarrow B_0$ can be compactified to a topological fibration
$\check f:\check X(B)\rightarrow B$. Both $X(B)$ and $\check X(B)$ are
topological manifolds.
\end{theorem}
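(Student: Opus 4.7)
The plan is to construct $f:X(B)\to B$ by exhibiting explicit topological models for the fibration near each stratum of $\Gamma$ and then gluing them to the semi-flat fibration $f_0:X(B_0)\to B_0$ via the classification of topological $T^3$-bundles by their monodromy representation.

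Along the interior of an edge of $\Gamma$ the monodromy of $\Lambda$ is conjugate to a single elementary shear, so a tubular neighborhood factors locally as $D^2\times I$ with monodromy only around the central axis of $D^2$. The local extension of $f_0$ is then the product of a standard topological $I_1$-degeneration of elliptic curves (central fiber a pinched torus) over $D^2$ with the trivial $S^1$-bundle over $I$. At each trivalent vertex $v$ of $\Gamma$ the monodromy representation of the three edge loops is one of the two prescribed types, and for each type I would construct by hand a topological fibration $\pi_\pm:Y_\pm\to V_\pm$, with $V_\pm\cong B^3$ a ball containing a trivalent graph modelling $\Gamma$ near $v$, such that: (i) the restriction to $V_\pm$ minus the graph is a topological $T^3$-bundle realising the prescribed monodromy; (ii) the singular fiber over $v$ is an explicit quotient of $T^3$ obtained by collapsing the vanishing cycles determined by the monodromy; and (iii) $Y_\pm$ is a topological $6$-manifold, verified by computing the link of each non-smooth point of $Y_\pm$ and checking that it is a sphere.

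To glue the $\pi_\pm$ onto $f_0$ over a neighborhood of $v$ one uses that $V_\pm$ minus the graph deformation retracts onto a thrice-punctured $S^2$, which is a $K(\pi,1)$ with free $\pi_1$ having the homotopy type of a wedge of two circles. Topological $T^3$-bundles have classifying space fitting in a fibration $BT^3\to B\mathrm{Homeo}(T^3)\to B\GL(3,\ZZ)$; the obstructions to lifting a monodromy representation to a bundle, and to constructing an isomorphism between two such bundles with equal monodromy, lie respectively in $H^3$ and $H^2$ of the base with coefficients in $\ZZ^3$, both of which vanish. Hence the restriction of $\pi_\pm$ is isomorphic as a topological $T^3$-bundle to the restriction of $f_0$ near $v$, and any such isomorphism may be used to reglue. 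Performing this at every vertex, together with the analogous gluings of the edge models, produces $f:X(B)\to B$. Manifoldness is then local: immediate over $B_0$, covered by the product model along edges, and built into the model at each vertex. The dual fibration $\check f:\check X(B)\to B$ is obtained by the identical argument with $\check\Lambda$ in place of $\Lambda$; since transpose-inverse swaps the two vertex types, one simply interchanges the roles of the two local vertex models.

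The central difficulty, and essentially the whole content of the theorem, lies in producing the vertex models $\pi_\pm$: one must describe the singular fiber and its neighborhood in $Y_\pm$ explicitly and verify by direct topological inspection that $Y_\pm$ is a manifold. The specific numerical form of the two allowed monodromy matrices is exactly what makes this possible, and generic unipotent monodromy would not admit a manifold compactification. Once these models are in hand, the edge gluings and the monodromy-based classification in the preceding paragraph are comparatively routine.
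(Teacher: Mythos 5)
Your overall strategy — explicit local models over neighborhoods of edges and vertices of $\Gamma$, glued to $f_0$ via a classification of $T^3$-bundles over a one-complex — is the right one and is essentially the strategy of the cited reference \cite{TMS}, to which this survey defers for the proof. The edge model (an $I_1$ degeneration over $D^2$ crossed with a trivial circle bundle over $I$, giving fibre $I_1\times S^1$ over the discriminant) is correct, and the obstruction-theoretic gluing argument is sound: a punctured ball around a trivalent vertex retracts onto a thrice-punctured sphere, whose $H^2$ and $H^3$ with any twisted $\ZZ^3$ coefficients vanish, so a $T^3$-bundle over it is determined by its monodromy and any two with the same monodromy admit an isomorphism to use for regluing. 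The handling of the dual fibration via $\check\Lambda$ and swapping the two vertex types is also correct.

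The gap you flag at the end is real and is the entire content of the theorem; a review must say so. You have not constructed $Y_\pm$, so the proposal is a framework rather than a proof. To make it a proof you would need to (a) write down $Y_\pm\to V_\pm$ so that the singular fibre over $v$ is exactly the quotient $T^3/\!\!\sim$ that the paper records — a single $S^1$ collapsed for the positive vertex ($\chi=+1$), a figure-eight singular locus for the negative vertex ($\chi=-1$) — and (b) verify manifoldness. Point (b) deserves more care than ``compute the link and check it is a sphere'': for the negative vertex the singular set of the singular fibre is one-dimensional, so the bad locus of the candidate $Y_-$ is not a discrete set of cone points, and the local manifold check there comes from recognizing the fibration near generic points of the figure-eight as a product of a nodal $2$-dimensional degeneration (total space smooth) with a disk, with a separate argument at the two special points. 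Absent these constructions the theorem is not established; but the framework around them, including the edge model and the bundle-theoretic gluing, is correct and matches the route taken in \cite{TMS}.
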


We won't give any details here of how this is carried out, but it is
not particularly difficult, as long as one restricts to the category
of topological (not $C^{\infty}$) manifolds. However, it is interesting
to look at the singular fibres we need to add.

If $b\in\Gamma$ is a point which is not a vertex of $\Gamma$, then $f^{-1}(b)$
is homeomorphic to $I_1\times S^1$, where $I_1$ denotes a Kodaira type $I_1$
elliptic curve, i.e., a pinched torus.

If $v$ is a vertex of $\Gamma$, with monodromy of the first type, then
$f^{-1}(v)=S^1\times S^1\times S^1/\sim$, with $(a,b,c)\sim (a',b',c')$
if $(a,b,c)=(a',b',c')$ or $a=a'=1$, where $S^1$ is identified with the unit
circle in $\CC$.
This is the three-dimensional analogue
of a pinched torus, and $\chi(f^{-1}(v))=+1$. We call this a \emph{positive}
fibre.

If $v$ is a vertex of $\Gamma$, with monodromy of the second type, then
$f^{-1}(v)$ can be described as $S^1\times S^1\times S^1/\sim$,
with $(a,b,c)\sim (a',b',c')$ if $(a,b,c)=(a',b',c')$ or $a=a'=1$, $b=b'$,
or $a=a',b=b'=1$.
The singular locus of this fibre is a figure eight, and $\chi(f^{-1}(v))=-1$.
We call this a \emph{negative} fibre.

So we see a very concrete local consequence of SYZ duality:
in the compactifications $X(B)$ and $\check X(B)$, the positive and
negative fibres are interchanged. Of course, this results in the
observation that the Euler characteristic changes sign under mirror symmetry
for Calabi-Yau threefolds.

\begin{example}
Continuing with Example \ref{quintic}, it was proved in \cite{TMS} 
that $\check X(B)$ is homeomorphic to the quintic and $X(B)$
is homeomorphic to the mirror quintic. Modulo a paper \cite{tori}
whose appearance
has been long-delayed because of other, more pressing, projects, 
the results of \cite{GBB} imply that the SYZ conjecture
holds for all complete intersections in toric varieties at a topological
level. 
\end{example}

W.-D.\ Ruan in 
\cite{Ruan} gave a description of \emph{Lagrangian} torus fibrations
for hypersurfaces in toric varieties using a symplectic flow argument, 
and his construction should
coincide with a \emph{symplectic} compactification of the symplectic
manifolds $\check X(B_0)$. In the three-dimensional case, such a 
symplectic compactification has been constructed by Ricardo
Casta\~no-Bernard and Diego Matessi \cite{CastMat}. If this compactification
is applied to the affine manifolds with singularities described here,
the resulting symplectic manifolds should be symplectomorphic
to the corresponding toric hypersurface, but this has not yet been shown.

\section{Tropical geometry}
\label{tropgeomsect}

Recalling that mirror symmetry is supposed to allow us to count curves,
let us discuss at an intuitive level how the picture so far gives us insight
into this question. Let $B$ be a tropical affine manifold. Then
as we saw, $X(B)$ carries the semi-flat complex structure, and it is
easy to describe some complex submanifolds of $X(B)$ 
as follows. Let $L\subseteq B$
be a linear subspace with rational slope, i.e., the tangent space $\shT_{L,b}$
to $L$ at any $b\in L$ can be written as $M\otimes_{\ZZ} \RR$
for some sublattice $M\subseteq \Lambda_b$. Then we obtain a submanifold
\[
X(L):=\shT_L/(\shT_L\cap \Lambda)\subseteq X(B).
\]
One checks easily that this is a complex submanifold. For example, if
$B=\RR^n$, so that $X(B)$ is just an algebraic torus $(\CC^*)^n$ with
coordinates $q_1,\ldots,q_n$, and $L\subseteq B$ is a codimension
$p$ affine linear subspace
defined by equations 
\[
\sum_j c_{ij}y_j=d_i,\quad 1\le i\le p,
\]
with $c_{ij}\in\ZZ$, $d_i\in\RR$, then the corresponding submanifold
of $X(B)$ is the subtorus given by the equations
\[
\prod_j q_j^{c_{ij}}= e^{-2\pi d_j},\quad 1\le i\le p.
\]

Of course, subtori of tori are not particularly interesting. How might we
build more complicated submanifolds? Let us focus on curves, where we
take the linear submanifolds of $B$ to be of dimension one. Then if we take
$L$ to be a line segment, ray, or line, $X(L)$ is a cylinder, with or without
boundary in the various cases. We can then try to glue
such cylinders together to obtain more complicated curves. For example,
imagine we are given rays  meeting at a point $b\in B=\RR^2$ as pictured
in Figure \ref{tropicalline}.
Take primitive integral tangent vectors $v_1,v_2,v_3\in\RR^2$ 
to $L_1,L_2$ and $L_3$
pointing outwards from the point $b$ where the three segments intersect.
Now we have the three cylinders $X(L_i)$ which do not match up over
$b$: the fibre $f^{-1}(b)=\RR^2/\ZZ^2$ intersects $X(L_i)$ in a circle
$\RR v_i/\ZZ v_i$. These circles are represented in $H_1(f^{-1}(b),\ZZ)=
\Lambda_b$ precisely by the vectors $v_1,v_2,v_3$, and so the condition
that the circles bound a surface in $f^{-1}(b)$ is that $v_1+v_2+v_3=0$. 
Thus, if this condition holds, we can glue in a surface $S$ contained
in $f^{-1}(b)$ so that $X(L_1)\cup X(L_2)\cup X(L_3)\cup S$ now has
no boundary at $b$. Of course, it is very far from being a holomorphic
submanifold. The expectation, however, is that this sort of object can
be deformed to a nearby holomorphic curve. 

\begin{figure}
\input{trivalent.pstex_t}
\caption{}
\label{tropicalline}
\end{figure}

Precisely, continuing with the above example, suppose $b=0$ and $v_1=(1,0)$,
$v_2=(0,1)$ and $v_3=(-1,-1)$. With holomorphic coordinates $q_1,q_2$
on $X(B)=(\CC^*)^2$, consider the curve $C\subseteq (\CC^*)^2$ defined
by $1+q_1+q_2=0$. Look at the
image of this curve under the map $f:X(B)\rightarrow B$, which here can be
written explicitly as $(q_1,q_2)\mapsto {-1\over 2\pi}(\log |q_1|,
\log |q_2|)$. One finds that one obtains a thickening of the trivalent
graph above, typically known as an amoeba. Further, if one considers
not $X(B)$ but $X_{\epsilon}(B)$, where now holomorphic coordinates are
given by $q_j=e^{2\pi i(x_j+iy_j)/\epsilon}$ and $f_{\epsilon}:X_{\epsilon}(B)
\rightarrow B$ is given by $(q_1,q_2)\mapsto -{\epsilon\over 2\pi}
(\log |q_1|,\log |q_2|)$, one finds that as $\epsilon\rightarrow 0$, 
$f_{\epsilon}(C)$ converges to the trivalent graph in the above figure.
In this sense the trivalent graph on $B$ is a limiting version of 
curves on a family of varieties tending towards a large complex structure
limit.

This basic picture for curves in algebraic tori is now very well studied.
In particular, this study spawned the subject of \emph{tropical
geometry}. The word \emph{tropical} is motivated by the role that the
\emph{tropical semiring} plays. This is the semiring $(\RR,\oplus, \odot)$
where addition and multiplication are given by
\begin{align*}
a\oplus b:= {} & \min(a,b)\\
a\odot b:= {} & a+b.
\end{align*}
The word ``tropical'' is used in honor of the Brazilian mathematician Imre
Simon, who pioneered use of this semi-ring.

We now consider polynomials over the tropical semiring, as follows.
Let $S\subseteq \ZZ^n$ be a finite subset, and
consider tropical polynomials on $\RR^n$ of the form
\[
g:=\sum_{(p_1,\ldots,p_n)\in S} c_{p_1\ldots p_n} x_1^{p_1}\cdots
x_n^{p_n}
\]
where the coefficients lie in $\RR$ and the operations are in the 
tropical semiring. Then $g$ is a convex piecewise linear function on
$\RR^n$, and the locus where $g$ is not linear is called a \emph{tropical
hypersurface}. In particular, in the case $n=2$, we obtain a tropical curve.
In the example of Figure \ref{tropicalline}, 
the relevant tropical polynomial could be taken to be
$0\oplus x_1\oplus x_2$.

While the tropical semiring has been used extensively in tropical geometry,
it is not so convenient for us to view our tropical curves on $B$ as being
defined by equations, since typically these curves will be of high codimension.
Instead, it is better to follow Mikhalkin \cite{Mik} and use parameterized
tropical curves.

The domain of a parameterized tropical curve will be a weighted
graph. In what follows, $\overline{\Gamma}$ will denote a connected graph.
Such a graph can be viewed in two different ways. First, it can be viewed
as a purely combinatorial object, i.e., a set $\overline{\Gamma}^{[0]}$
of vertices and a set $\overline{\Gamma}^{[1]}$ of edges consisting of
unordered pairs of elements of $\overline{\Gamma}^{[0]}$, indicating the
endpoints of an edge. We can also view $\overline{\Gamma}$ as the topological
realization of the graph, i.e., a topological space which is the union of
line segments corresponding to the edges. We shall confuse these two
viewpoints at will. We will then denote by $\Gamma$ the topological
space obtained from $\overline{\Gamma}$ by deleting the univalent vertices
of $\overline{\Gamma}$, so that $\Gamma$ may have some non-compact edges.

We also take $\overline{\Gamma}$ to come with a \emph{weight function}, a map
\[
w:\overline{\Gamma}^{[1]}\rightarrow \NN=\{0,1,2,\ldots\}.
\]

Replacing $\RR^n$ with a general tropical affine manifold
$B$, we now arrive at the following definition:

\begin{definition} A parameterized tropical curve in $B$ is a
continuous map
\[
h:\Gamma\rightarrow B
\]
where $\Gamma$ is obtained from a graph $\overline{\Gamma}$ as above,
satisfying the following two properties:
\begin{enumerate}
\item If $E\in\Gamma^{[1]}$ and $w(E)=0$, then $h|_E$ is
constant; otherwise $h|_E$ is a proper embedding of $E$ into $B$ as a
line segment, ray or line of rational slope.
\item  \emph{The balancing condition}. Let $V\in\overline{\Gamma}^{[0]}$
be a vertex with valency larger than $1$, with adjacent edges
$E_1,\ldots,E_{\ell}$. Let $v_i\in \Lambda_{h(V)}$ be a primitive
tangent vector to $h(E_i)$ at $h(V)$, pointing away from $h(V)$. Then
\[
\sum_{i=1}^{\ell} w(E_i)v_i=0.
\]
\end{enumerate}
\end{definition}

Here the balancing condition is just expressing the topological requirement
that the boundaries of the various cylinders $X(h(E_i))\subseteq X(B)$ 
can be connected
up with a surface contained in the fibre of $X(B)\rightarrow B$
over $h(V)$. The weights can be interpreted
as taking the cylinders $X(h(E_i))$ with multiplicity.

An important question then arises:

\begin{question}
When can a given parameterized tropical curve be viewed as a limit
of holomorphic curves in $X_{\epsilon}(B)$ as $\epsilon\rightarrow 0$?
\end{question}

This question has attracted a great deal of attention when $B=\RR^n$,
with completely satisfactory results in the case $n=2$ (Answer: always),
and less complete results when $n\ge 3$. The $n=2$ case was first treated
by Mikhalkin \cite{Mik}, and resuts in all dimensions were first obtained
by Nishinou and Siebert \cite{NS}. 
In particular, Mikhalkin proved that in this two-dimensional case,
one can calculate numbers of curves of a given degree and genus
passing through a fixed set of points, showing that difficult holomorphic
enumerative problems can be solved by a purely combinatorial approach.
This work gives hope that one can really count curves combinatorially
in much more general settings. In the two-dimensional case, again,
my own work \cite{GP2}
showed that the mirror side (for mirror symmetry for $\PP^2$)
could also be interpreted tropically, giving a completely tropical 
interpretation of mirror symmetry for $\PP^2$.

So far we have not considered the case that $B$ has singularities. In 
case $B$ has singularities, we expect that one should be able to relax
the balancing condition when a vertex falls inside of a point of the
singular locus, and in particular one can allow univalent vertices
which map to the singular locus. The reason for this is that once we
compactify $X(B_0)$ to $X(B)$, one expects to find holomorphic disks
fibering over line segments emanating from singular points: see Figure
\ref{vanishingdisk}
for a depiction of this when $B$ is two-dimensional, having isolated
singularities.

\begin{figure}
\centerline{\epsfbox{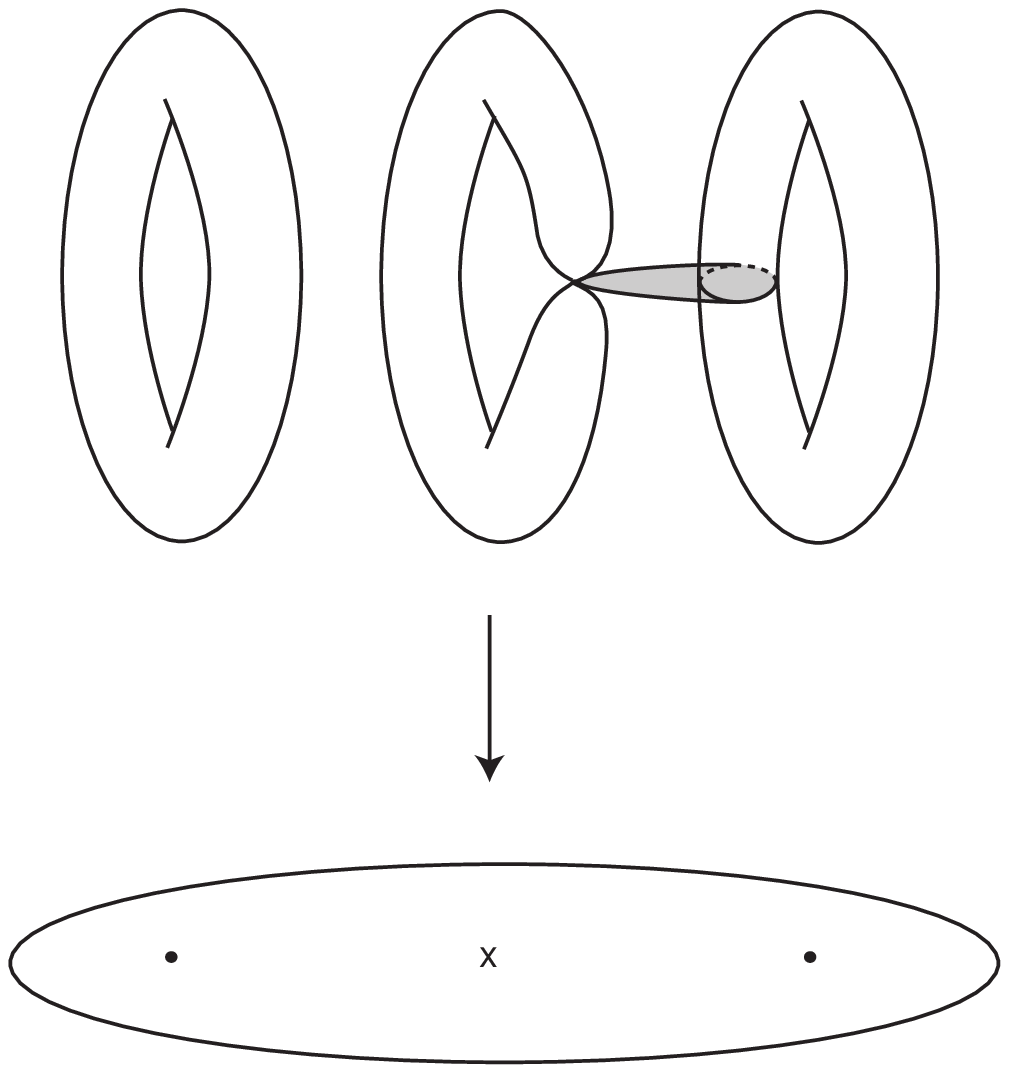}}
\caption{}
\label{vanishingdisk}
\end{figure}

We will avoid giving a precise definition of what a tropical curve should
mean in the case that $B$ has singularities, largely because it is not
clear yet what the precise definition should be. Hopefully, though, this
discussion makes it clear that at an intuitive level, counting curves
should be something which can be done on $B$.

\section{The problems with the SYZ conjecture, and how to get around them}
\label{problemsection}

The discussion of \S\ref{affsection}
demonstrates that the SYZ conjecture gives a beautiful
description of mirror symmetry at a purely topological level. This, by itself,
can often be useful, but fails to get at the original hard differential
geometric conjecture and fails to give insight into why mirror symmetry
counts curves. 

In order for the full-strength version of the SYZ conjecture to hold,
the strong version of duality for topological torus fibrations we saw
in \S\ref{affsection}
should continue to hold at the special Lagrangian
level. This would mean that a mirror pair $X,\check X$ would
possess special Lagrangian torus fibrations
$f:X\rightarrow B$ and $\check f:\check X\rightarrow B$ with codimension
two discriminant loci, and the discriminant loci of $f$ and
$\check f$ would coincide.
These fibrations would then be dual away from the discriminant locus.

There are examples of special Lagrangian fibrations on non-compact
toric varieties $X$ with discriminant locus looking very similar to
what we have described in the topological case. In particular, if 
$X$ is an $n$-dimensional Ricci-flat K\"ahler manifold with a 
$T^{n-1}$-action preserving the metric and holomorphic $n$-form,
then $X$ will have a very nice special Lagrangian fibration
with codimension two discriminant locus. (See \cite{SLAGex} and 
\cite{Gold}). However, Dominic Joyce (see \cite{Joyce} and other
papers cited therein) began studying some 
three-dimensional $S^1$-invariant
examples, and discovered quite different behaviour. There is an argument
in \cite{SlagII}
that if a special Lagrangian fibration is $C^{\infty}$, then the
discriminant locus will be (Hausdorff) codimension two. However, Joyce
discovered examples which were not differentiable, but only piecewise
differentiable, and furthermore, had a codimension one discriminant locus:

\begin{example}
Define $F:\CC^3\rightarrow \RR\times\CC$ by
$F(z_1,z_2,z_3)=(a,c)$ with
$2a=|z_1|^2-|z_2|^2$ and
\[
c=\begin{cases}
z_3&a=z_1=z_2=0\\
z_3-\bar z_1\bar z_2/|z_1|& a\ge 0, z_1\not=0\\
z_3-\bar z_1\bar z_2/|z_2|&a<0.
\end{cases}
\]
It is easy to see that if $a\not=0$, then $F^{-1}(a,c)$ is homeomorphic to
$\RR^2\times S^1$, while if $a=0$, then $F^{-1}(a,c)$ is a cone over $T^2$:
essentially, one copy of $S^1$ in $\RR^2\times S^1$ collapses to a point. 
In addition, all fibres of this map are special Lagrangian, and it is obviously
only piecewise smooth. The discriminant locus is the entire plane given
by $a=0$.
\end{example}

This example forces a reevaluation of the strong form of the SYZ conjecture.
In further work Joyce found evidence for a more likely picture
for general special Lagrangian fibrations in three dimensions. The discriminant
locus, instead of being a codimension two graph, will be a codimension one
blob. Typically the union of the singular points of singular fibres will
be a Riemann surface, and it will map to an amoeba-shaped set in $B$, i.e.,
the discriminant locus looks like the picture on the right rather than the 
left in Figure \ref{fatdisc},
and will be a fattening of the old picture of a codimension two discriminant.

\begin{figure}
\includegraphics{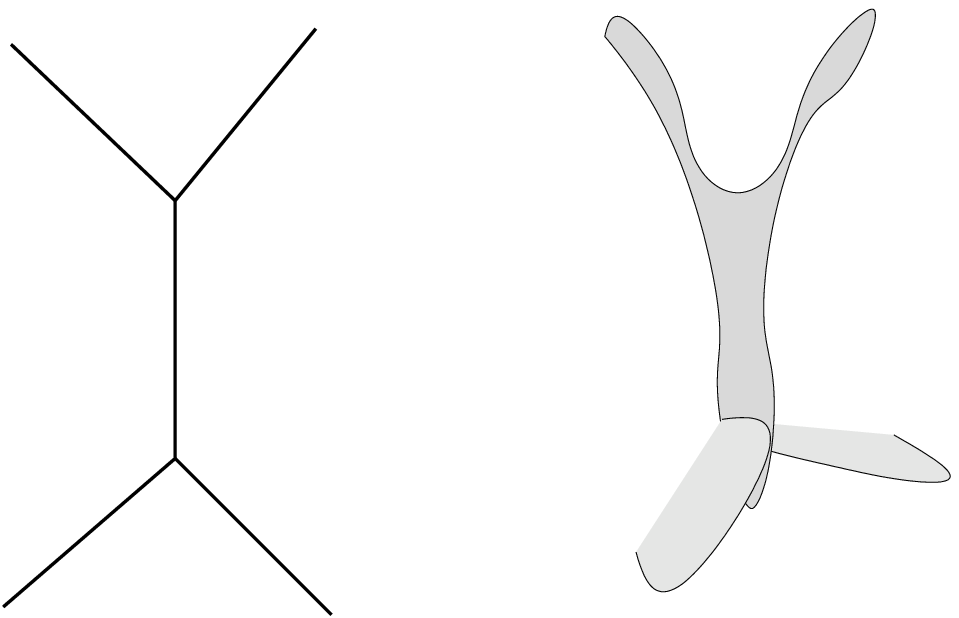}
\caption{}
\label{fatdisc}
\end{figure}

Joyce made some additional arguments to suggest that this fattened 
discriminant locus must look fundamentally different in a neighbourhood
of the two basic types of vertices we saw in \S\ref{affsection},
with the two types of vertices expected to appear pretty much as depicted in
Figure \ref{fatdisc}.
Thus the strong form of duality mentioned above, where we expect the
discriminant loci of the special Lagrangian fibrations on a mirror pair
to be the same, cannot hold. If this is the case, one needs to replace
this strong form of duality with a weaker form.

It seems likely that the best way to rephrase the SYZ conjecture is
in a limiting form. Mirror symmetry as we currently understand it has to
do with degenerations of Calabi-Yau manifolds. Given a flat family $f:\X
\rightarrow D$ over a disk $D$, with the fibre $\X_0$ over $0$
singular and all other fibres $n$-dimensional
Calabi-Yau manifolds, we say the family is \emph{maximally unipotent}
if the monodromy transformation $T:H^n(\X_t,\QQ)\rightarrow H^n(\X_t,\QQ)$
($t\in D$ non-zero) satisfies $(T-I)^{n+1}=0$ but $(T-I)^n\not=0$.
It is a standard expectation 
of mirror symmetry that mirrors should be associated
to maximally unipotent degenerations of Calabi-Yau manifolds. In particular,
given two different maximally unipotent degenerations in a single complex
moduli space for some Calabi-Yau manifold, one might obtain different
mirror manifolds. Such degenerations are usually called ``large complex
structure limits'' in the physics literature, although sometimes this
phrase is used to impose some additional conditions on the degeneration,
see \cite{Morr}.

We recall the definition of Gromov-Hausdorff convergence,
a notion of convergence of a sequence of metric spaces.

\begin{definition}
Let $(X,d_X)$, $(Y,d_Y)$ be two compact
metric spaces. Suppose there exists maps $f:X\rightarrow Y$
and $g:Y\rightarrow X$ (not necessarily continuous) such that
for all $x_1,x_2\in X$,
$$|d_X(x_1,x_2)-d_Y(f(x_1),f(x_2))|<\epsilon$$
and for all $x\in X$,
$$d_X(x,g\circ f(x))<\epsilon,$$
and the two symmetric properties for $Y$ hold. Then we say the
Gromov--Hausdorff distance between $X$ and $Y$ is at most $\epsilon$.
The Gromov--Hausdorff distance $d_{GH}(X,Y)$ is the infimum of all
such $\epsilon$.
\end{definition}

It follows from results of Gromov (see for example \cite{Petersen}, 
pg. 281, Cor.
1.11) that the space of compact Ricci-flat manifolds with diameter $\le C$
is precompact with respect to Gromov-Hausdorff distance, 
i.e., any sequence of such manifolds has a subsequence
converging with respect to the Gromov-Hausdorff distance to a metric
space. This metric space could be quite bad; this is quite outside
the realm of algebraic geometry! Nevertheless, this raises the following
natural question. Given a maximally unipotent
degeneration of Calabi-Yau manifolds $\X\rightarrow D$, take a 
sequence $t_i\in D$ converging to $0$, and consider a sequence $(\X_{t_i},
g_{t_i})$, where $g_{t_i}$ is a choice of Ricci-flat metric chosen so
that $Diam(g_{t_i})$ remains bounded. What is the Gromov-Hausdorff
limit of $(\X_{t_i},g_{t_i})$, or the limit of some convergent subsequence?

\begin{example} Consider a degenerating family of elliptic curves
parameterized by $t$, given by $\CC/(\ZZ+\ZZ\tau)$ where
$1$ and $\tau={1\over 2\pi i}\log t$ are periods of the elliptic curves. 
If we take $t$ approaching
$0$ along the positive real axis, then we can just view this as a family
of elliptic curves $\X_{\alpha}$
with period $1$ and $i\alpha$ with $\alpha\rightarrow\infty$.
If we take the standard Euclidean metric $g$ on $\X_{\alpha}$, then
the diameter of $\X_{\alpha}$ is unbounded. To obtain a bounded diameter,
we replace $g$ by $g/\alpha^2$; equivalently, we can keep $g$ fixed
on $\CC$ but change the periods of the elliptic curve to $1/\alpha, i$.
It then becomes clear that the Gromov-Hausdorff limit of such a sequence
of elliptic curves is a circle $\RR/\ZZ$.
\end{example}

This simple example motivates the first conjecture about maximally
unipotent degenerations, conjectured independently by myself and Wilson
on the one hand \cite{GrWi} and Kontsevich and Soibelman \cite{KS} on the other.

\begin{conjecture}
\label{GWKSconj}
Let $\X\rightarrow D$ be a maximally unipotent
degeneration of simply-connected 
Calabi-Yau manifolds with full $SU(n)$ holonomy, 
$t_i\in D$ with $t_i\rightarrow 0$,
and let $g_i$ be a Ricci-flat metric on $\X_{t_i}$ normalized to have
fixed diameter $C$. Then a convergent subsequence of $(\X_{t_i},g_i)$
converges to a metric space $(X_{\infty},d_{\infty})$, where $X_{\infty}$
is homeomorphic to $S^n$. Furthermore, $d_{\infty}$ is induced by
a Riemannian metric on $X_{\infty}\setminus\Gamma$, where $\Gamma\subseteq
X_{\infty}$ is a set of codimension two.
\end{conjecture}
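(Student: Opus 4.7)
My plan is to reduce the conjecture to three largely independent pieces: (a) the existence of a (possibly weak) special Lagrangian torus fibration $f_t:\X_t\to B_t$ for $t$ close to $0$; (b) an analysis of the collapse of the Ricci-flat metric $g_i$ to the McLean metric on the base; and (c) a topological identification of the limiting base with $S^n$ using the combinatorics of the degeneration. Throughout I work with the normalized family $(\X_{t_i},g_i)$ of diameter $C$, and I use the McLean--Hitchin formalism recalled in Section~\ref{modulisection} and Proposition~\ref{hessianmetric} to understand what the limit should look like.

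\emph{Step 1 (SLAG fibrations near the limit).} First I would invoke a weak version of the SYZ conjecture to produce, for $i$ large, a piecewise-smooth Lagrangian torus fibration $f_i:\X_{t_i}\to B_i$ whose smooth fibres are special Lagrangian outside a discriminant locus $\Gamma_i\subseteq B_i$ which, in the limit, is expected to collapse to a codimension-two set. Away from $\Gamma_i$, McLean's theorem endows $B_i\setminus\Gamma_i$ with two tropical affine structures and an induced Hessian metric $g_{B_i}$ with local potential $K_i$; by the Calabi--Yau condition $\omega^n=C\,\Omega\wedge\bar\Omega$ the potential $K_i$ satisfies the real Monge--Amp\`ere equation $\det(\partial^2K_i/\partial y_j\partial y_k)=\mathrm{const}$.

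\emph{Step 2 (collapse of fibres).} Once SLAG fibrations are in hand, I would show that under the normalization of diameter, the volumes of the fibres $f_i^{-1}(b)$ tend to zero uniformly in $b$, while the diameters of $(B_i,g_{B_i})$ stay bounded. Applying Gromov precompactness fibrewise and base-wise, and using Cheeger--Colding-type estimates to control the Ricci-flat metric on the almost-flat tori, I would extract a convergent subsequence whose Gromov--Hausdorff limit $(X_\infty,d_\infty)$ is isometric to $(B_\infty,d_\infty)$, where $B_\infty$ is the completion of a Monge--Amp\`ere affine manifold with singularities $(B_0,K_\infty)$ obtained as a limit of the $(B_i\setminus\Gamma_i,K_i)$. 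This identifies $d_\infty$ on $X_\infty\setminus\Gamma_\infty$ with the Hessian metric coming from $K_\infty$, giving the second assertion of the conjecture once we know $\Gamma_\infty:=X_\infty\setminus B_0$ has codimension two.

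\emph{Step 3 (topology of the base).} To see $X_\infty\cong S^n$, I would tie $B_\infty$ to the algebraic geometry of the degeneration $\X\to D$. Maximal unipotency of the monodromy, together with a suitable (toroidal/semistable/log smooth) model of the family, produces a dual intersection complex $\Delta(\X_0)$, and the philosophy of Section~\ref{affsection}, together with the construction of $X(B)$ and $\check X(B)$ there, strongly suggests that $B_\infty$ is homeomorphic to $\Delta(\X_0)$ with its canonical piecewise-linear structure. Under the hypotheses of simple connectivity and full $SU(n)$ holonomy (so that $h^{n,0}=1$ and no intermediate holomorphic forms obstruct the argument), the weight filtration on $H^n(\X_t,\QQ)$ forces the dual complex to be a rational homology $n$-sphere, and one checks using the explicit local models around trivalent vertices (cf.\ Example~\ref{quintic}) that $\Delta(\X_0)$ is a genuine topological manifold away from a codimension-two set. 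Combined with simple connectivity, the resolution of the Poincar\'e conjecture yields $\Delta(\X_0)\cong S^n$ for $n\le 3$, while in higher dimensions one would argue via an analogue of Koll\'ar--Xu's work on the dual complexes of log Calabi--Yau pairs.

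\emph{Main obstacle.} The genuinely hard step is Step~1. As Joyce's examples show, special Lagrangian fibrations in dimension $\ge 3$ need not have codimension-two discriminant locus, and indeed may only be piecewise smooth with fattened, amoeba-like singular sets as in Figure~\ref{fatdisc}. The conjecture nevertheless predicts that after rescaling to bounded diameter and passing to the Gromov--Hausdorff limit, these fattened discriminants collapse onto a codimension-two set. Making this precise---ideally by bypassing the analytic construction of SLAG fibrations altogether and constructing $B_\infty$ directly from the algebraic data of the degeneration, for instance via Berkovich's non-archimedean analytification following Kontsevich--Soibelman---is where any realistic attempt must concentrate its technical effort.
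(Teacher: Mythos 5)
The statement you are trying to prove is presented in the paper as an open \emph{conjecture}, not a theorem; the paper gives no proof. The paper records that the conjecture has been proved only for K3 surfaces (in \cite{GrWi}), and there the argument is quite different in spirit from your plan: it exploits the hyperk\"ahler structure to rotate special Lagrangian fibrations into elliptic fibrations (whose existence is easy on a K3), and then constructs explicit approximate Ricci-flat metrics near singular fibres. The paper explicitly warns that ``Most of the techniques used are not available in higher dimension.'' For the general case the paper offers only a heuristic: if special Lagrangian fibrations $f_i:\X_{t_i}\to B_i$ existed, the fibres would shrink and the base would be the limit. That heuristic is exactly your Steps 1 and 2, so your proposal does not prove anything beyond what the paper already motivates.

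The central issue is that your Step 1 assumes a weak SYZ conjecture (existence of special Lagrangian or piecewise-smooth Lagrangian torus fibrations with controlled discriminant on $\X_{t_i}$ for $i\gg 0$), and this assumption is essentially the whole difficulty: the paper emphasizes that even the existence of a single special Lagrangian torus near a maximally unipotent point is unknown, that Joyce's examples show the discriminant need not be codimension two, and that the only unconditional partial result in higher dimensions (\cite{Zhang}) produces fibrations only over regions of bounded geometry, while the hyperk\"ahler collapsing result \cite{GTZ} \emph{assumes} the existence of abelian fibrations rather than deriving it. You honestly flag Step 1 as the main obstacle, which is correct, but without it Steps 2 and 3 have no input. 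Step 3 also elides substantial points: the dual intersection complex being a rational homology sphere does not immediately give a topological $S^n$ (your invocation of the Poincar\'e conjecture presupposes that the \emph{base} is simply connected, which is not the same as $\X_t$ being simply connected, and assumes the dual complex is a manifold), and identifying the Gromov--Hausdorff limit $B_\infty$ with the dual intersection complex is itself a nontrivial expectation stated in the paper (Theorem~\ref{complextheorem} is only a partial result in this direction). In short, your proposal is a reasonable roadmap consistent with the paper's motivating discussion, but it is not a proof, and there is no proof in the paper against which to compare it.
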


Here the topology of the limit depends on the nature of the non-singular
fibres $\X_t$; for example,
if instead $\X_t$ was hyperk\"ahler, then we would expect
the limit to be a projective space. Also, even in the case of full
$SU(n)$ holonomy, if $\X_t$ is not simply connected, we would expect
limits such as $\QQ$-homology spheres to arise.

Conjecture \ref{GWKSconj} is directly inspired by the SYZ conjecture. Suppose
we had special Lagrangian fibrations $f_i:\X_{t_i}\rightarrow B_i$.
Then as the maximally unipotent degeneration is approached, one can
see that the volume of the fibres of these fibrations
goes to zero. This would suggest these fibres collapse, hopefully
leaving the base as the limit.

This conjecture was proved by myself and Wilson
in 2000 for K3 surfaces in \cite{GrWi}. The proof relied on a number
of pleasant facts about K3 surfaces. First, they are hyperk\"ahler manifolds,
and a special Lagrangian torus fibration becomes an elliptic fibration
after a hyperk\"ahler rotation of the complex structure. Since it is 
easy to construct elliptic fibrations on K3 surfaces, and indeed such
a fibration arises from the data of the maximally unipotent degeneration,
it is easy to obtain a special Lagrangian fibration. Once this is done,
one needs to carry out
a detailed analysis of the behaviour of Ricci-flat metrics in the limit.
This is done by creating good approximations to Ricci-flat metric,
using the existence of explicit local models for these metrics
near singular fibres of special Lagrangian fibrations in complex dimension
two.

Most of the techniques used are not available in higher dimension.
However, much more recently, weaker collapsing results in the hyperk\"ahler 
case were obtained in work with V.\ Tosatti and Y.\ Zhang in 
\cite{GTZ}, 
assuming the existence of abelian variety fibrations analogous to the
elliptic fibrations in the K3 case. Rather than getting an explicit
approximate Ricci-flat metric, we make use of a priori 
estimates of Tosatti in \cite{Tos}.  

In the general Calabi-Yau case, the only progress towards the conjecture
has been work of Zhang in \cite{Zhang} showing existence of special Lagrangian
fibrations in regions of Calabi-Yau manifolds with bounded injectivity
radius and sectional curvature and deduces local collapsing from the
existence of special Lagrangian fibrations. 

The motivation for Conjecture \ref{GWKSconj} 
from SYZ also provides a limiting form
of the conjecture. There are any number of problems with trying to prove
the existence of special Lagrangian fibrations on Calabi-Yau manifolds.
Even the existence of a single special Lagrangian torus near a maximally
unipotent degeneration is unknown, but we expect it should be easier to
find them as we approach the maximally unipotent point. Furthermore,
even if we find a special Lagrangian torus, we know that it moves
in an $n$-dimensional family, but we don't know its deformations fill
out the entire manifold. In addition, there is no guarantee that even if it
does, we obtain a foliation of the manifold: nearby special Lagrangian
submanifolds may intersect. (For an example, see \cite{Matessi}.)
So instead, we will just look at the moduli space
of special Lagrangian tori.

Given a maximally unipotent degeneration of Calabi-Yau manifolds of 
dimension $n$, it is known that the image of $(T-I)^n:H_n(\shX_t,\QQ)
\rightarrow H_n(\shX_t,\QQ)$ is a one-dimensional subspace $W_0$.
Suppose, given a sequence $t_i$ with $t_i \rightarrow 0$ as $i\rightarrow
\infty$, that for $t_i$ sufficiently close
to zero, there is a special Lagrangian $T^n$ which generates $W_0$.
This is where we expect to find fibres
of a special Lagrangian fibration associated to a maximally unipotent
degeneration. Let $B_{0,i}$ be the moduli space of deformations
of this torus; every point of $B_{0,i}$ corresponds to a smooth special
Lagrangian torus in $\X_{t_i}$. This manifold then comes
equipped with the McLean metric and affine structures
defined in \S 2. One can then
compactify $B_{0,i}\subseteq B_i$, (probably by taking the closure
of $B_{0,i}$ in the space of special Lagrangian currents; the details
aren't important here). This gives a series of metric spaces $(B_i,d_i)$
with the metric $d_i$ induced by the McLean metric. If the McLean
metric is normalized to keep the diameter of $B_i$ constant independent of $i$,
then we can hope that $(B_i,d_i)$ converges to a compact metric space
$(B_{\infty},d_{\infty})$. Here then is the limiting form of SYZ:

\begin{conjecture} If $(\X_{t_i},g_i)$ converges to $(X_{\infty},g_{\infty})$
and $(B_i,d_i)$ is non-empty for large $i$ and converges to
$(B_{\infty},d_{\infty})$, then $B_{\infty}$ and $X_{\infty}$
are isometric up to scaling. Furthermore, there is a subspace $B_{\infty,0}
\subseteq B_{\infty}$ with $\Gamma:=B_{\infty}\setminus B_{\infty,0}$ 
of Hausdorff codimension 2 in $B_{\infty}$
such that $B_{\infty,0}$ is a Monge-Amp\`ere manifold, with the Monge-Amp\`ere
metric inducing $d_{\infty}$ on $B_{\infty,0}$.
\end{conjecture}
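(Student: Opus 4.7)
The plan is to combine the semi-flat picture of Section \ref{semiflatsection} with a priori analytic estimates on the Ricci-flat metric near the maximally unipotent limit, in the spirit of \cite{GrWi} and \cite{GTZ}. By McLean's theorem and Proposition \ref{hessianmetric}, each $B_{0,i}$ carries two tropical affine structures related by Legendre transform together with the McLean metric $d_i$, which is Hessian with local potential $K_i$ encoding the periods of $\omega_{t_i}$ and $\Im\Omega_{t_i}$. I would first rescale so that $\mathrm{Diam}(B_i,d_i)=1$ and, after choosing basepoints, normalize each $K_i$ modulo affine functions so that the family $\{(B_i,K_i)\}$ has uniform local $C^{1,\alpha}$ estimates on compact subsets of $B_{0,i}$ away from the singular loci $\Gamma_i$.

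Next, over the regular locus $B_{0,i}$ the Ricci-flat condition on $\X_{t_i}$ forces the $K_i$ to approximately solve the real Monge-Amp\`ere equation $\det(\partial^2 K_i/\partial y_j\partial y_k)=c_i$, so $g_i$ should be $C^0$-close on compact subsets of $B_{0,i}$ to the semi-flat Ricci-flat metric built from $(B_{0,i},K_i)$ via the construction of Section \ref{semiflatsection}. Because the generator of $W_0\subseteq H_n(\X_{t_i},\QQ)$ is represented by these SYZ tori, the monodromy weight filtration forces their fibrewise volume $\int_{\X_{t_i,b}}\Im\Omega_{t_i}$ to zero as $t_i\to 0$, so the fibers collapse relative to the base. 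The projection $f_i:\X_{t_i}\to B_i$ is then an $\epsilon$-Gromov-Hausdorff approximation for $i\gg 0$, and any convergent subsequence yields an isometry $X_\infty\cong B_\infty$ up to an overall scaling.

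The real Monge-Amp\`ere equation is stable under uniform convergence of convex potentials, so a subsequential limit $K_\infty$ of the rescaled $K_i$ exists on an open dense $B_{\infty,0}\subseteq B_\infty$ and satisfies $\det(\partial^2 K_\infty/\partial y_j\partial y_k)=c_\infty$ in a weak (pluripotential) sense. This exhibits $B_{\infty,0}$ as a Monge-Amp\`ere manifold whose Hessian metric induces $d_\infty$. For the codimension-$2$ bound on $\Gamma=B_\infty\setminus B_{\infty,0}$ I would rely on an $\epsilon$-regularity statement for Ricci-flat limits: at any point around which the curvature of $g_i$ stays bounded, the nearby fiber is a smooth torus and the semi-flat approximation extends, so $\Gamma$ is contained in the curvature concentration set of the $g_i$, which has Hausdorff codimension $\ge 2$ by the usual volume-monotonicity argument for Einstein metrics.

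The principal obstacle lies in the second step: producing a special Lagrangian fibration is unknown in dimension $\ge 3$, and even granting its existence, Joyce's examples of codimension-$1$ fattened discriminant loci (\S\ref{problemsection}) show that $\Gamma_i\subseteq B_i$ need not be codimension $2$ for finite $i$. The genuinely new analytic input required, beyond what is available in the K3 case \cite{GrWi} or the hyperk\"ahler setting \cite{GTZ}, is a quantitative statement that the $\epsilon_i$-neighborhood of the fattened part of $\Gamma_i$ has $d_i$-volume tending to zero as $i\to\infty$, so that the codimension-$1$ discriminant washes out in the Gromov-Hausdorff limit and only a codimension-$2$ residue $\Gamma\subseteq B_\infty$ survives. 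Without such a collapse estimate, the identification of $d_\infty$ on $B_{\infty,0}$ with the Monge-Amp\`ere Hessian metric remains conditional.
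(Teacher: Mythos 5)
This statement is a \emph{conjecture}, not a theorem, and the paper offers no proof: immediately after stating it the author writes that ``the above conjecture seems likely to be accessible by currently understood techniques. I remain hopeful that this conjecture will be proved, though much additional work will be necessary.'' So there is no internal argument to compare yours against; what you have written is a plausible plan of attack for an open problem, and you are right to label it as such. Your outline is in the spirit of the known partial results that the paper cites (\cite{GrWi} for K3, \cite{GTZ} for hyperk\"ahler fibrations), and you correctly identify the two principal obstacles the author also emphasizes: the lack of existence results for special Lagrangian fibrations in dimension $\geq 3$, and Joyce's fattened codimension-one discriminant loci.

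Two further gaps in your sketch deserve mention. First, $B_{0,i}$ is a \emph{moduli space} of special Lagrangian tori, not the base of a fibration: McLean gives the tori an $n$-dimensional family of deformations, but nearby tori may intersect (the paper cites \cite{Matessi} for examples), so the map $f_i:\X_{t_i}\to B_i$ you use to produce an $\epsilon$-Gromov--Hausdorff approximation is not available in general. Justifying ``fiber collapse $\Rightarrow$ $X_\infty\cong B_\infty$'' needs an argument that bypasses the fibration assumption, which is exactly the step the paper says is missing. Second, your appeal to ``the usual volume-monotonicity argument for Einstein metrics'' to get Hausdorff codimension $\geq 2$ of the singular set is a non-collapsed $\epsilon$-regularity statement. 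Here the $\X_{t_i}$ collapse to the lower-dimensional $B_\infty$, so the relevant regularity theory is the much more delicate collapsed-Einstein theory, and a codimension-two bound on the singular set of a collapsed Gromov--Hausdorff limit is not available at the generality you are invoking. Finally, the McLean metric on $B_{0,i}$ is Hessian but, as the paper explicitly notes (again citing \cite{Matessi}), it need not satisfy the Monge--Amp\`ere equation for finite $t_i$; your step asserting that ``the Ricci-flat condition on $\X_{t_i}$ forces the $K_i$ to approximately solve'' the real Monge--Amp\`ere equation is exactly the quantitative comparison between the genuine Ricci-flat metric and the semi-flat approximation whose absence in higher dimensions is the crux of the problem.
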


Essentially what this is saying is that as we approach the maximally
unipotent degeneration, we expect to have a special Lagrangian fibration
on larger and larger subsets of $\X_{t_i}$. Furthermore, in the limit,
the codimension one discriminant locus suggested by Joyce converges to
a codimension two discriminant locus, and (the not necessarily
Monge-Amp\`ere, see \cite{Matessi}) Hessian metrics on $B_{0,i}$
converge to a Monge-Amp\`ere metric.

The main point I want to get at here is that
it is likely the SYZ conjecture is only ``approximately'' correct, and
one needs to look at the limit to have a hope of proving anything.
On the other hand, the above conjecture seems likely to be accessible
by currently understood techniques.  
I remain hopeful that this conjecture will be proved,
though much additional work will be necessary.

How do we do mirror symmetry using this modified version of the SYZ conjecture?
Essentially, we would follow these steps:
\begin{enumerate}
\item 
We begin with a maximally unipotent degeneration of Calabi-Yau manifolds
$\X\rightarrow D$, along with a choice of polarization. This gives
us a K\"ahler class $[\omega_t]\in H^2(\X_t,\RR)$ for each $t\in D\setminus
0$, represented by $\omega_t$ the K\"ahler form of a Ricci-flat metric
$g_t$. 
\item Identify the Gromov-Hausdorff limit of a sequence $(\X_{t_i}, r_ig_{t_i})$
where $t_i\rightarrow 0$ and $r_i$ is a scale factor which keeps the diameter
of $\X_{t_i}$ constant. The limit will be, if the above conjectures work,
an affine manifold with singularities $B$ along with a Monge-Amp\`ere metric.
\item Perform a Legendre transform to obtain a new affine manifold with
singularities $\check B$, though with the same metric.
\item Try to construct a compactification of $X_{\epsilon}(\check B_0)$
for small $\epsilon>0$ to obtain a complex manifold $X_{\epsilon}(\check B)$.
This will be the mirror manifold.
\end{enumerate}

As we shall see, we do not expect that we will need the full strength
of steps (2) and (3) to carry out mirror symmetry; some way of identifying the
base $B$ will be sufficient. Nevertheless, (2) is interesting
from the point of view of understanding the differential geomtry
of Ricci-flat K\"ahler manifolds.

Step (4), on the other hand, is crucial, and
we need to elaborate on this last step a bit more. The
problem is that while we expect that it should be possible in general
to construct symplectic compactifications of the symplectic manifold
$\check X(B_0)$ (and hence get the mirror as a symplectic manifold, see
\cite{CastMat} for the three-dimensional case), 
we don't expect to be able to compactify $X_{\epsilon}(\check B_0)$
as a complex manifold. Instead, the expectation is that a small deformation
of $X_{\epsilon}(\check B_0)$ is necessary before it can be compactified.
Furthermore, this small deformation is critically important in mirror
symmetry: \emph{it is this small deformation which provides the $B$-model
instanton corrections}. 

Because this last item is so important, let's give it a name:

\begin{question}[The reconstruction problem, Version I]
\label{reconstruct1}
Given a tropical affine manifold with singularities $B$, construct
a complex manifold $X_{\epsilon}(B)$ which is a compactification
of a small deformation of $X_{\epsilon}(B_0)$.
\end{question} 

We will return to this question later in the paper. However, 
I do not wish to dwell further on the differential-geometric versions
of the SYZ conjecture here. Instead I will move on to describing how
the above discussion motivated the algebro-geometric
program developed by myself and Siebert for understanding mirror symmetry,
and then describe recent work and ideas coming
out of this program.

\section{Gromov-Hausdorff limits, algebraic degenerations, and mirror
symmetry}

We now have two notions of limit: the familiar algebro-geometric 
notion of a degenerating family $\X\rightarrow D$ over a disk
on the one hand, and the Gromov-Hausdorff limit on the other. In 2000 Kontsevich
and Soibelman
had an important insight (see \cite{KS}) into the connection between these
two. In this section I will give a rough idea of how and why this works.

Very roughly speaking, the Gromov-Hausdorff limit $(\X_{t_i},g_{t_i})$
as $t_i\rightarrow 0$, or equivalently, the base of the putative SYZ
fibration, should coincide, topologically, with the \emph{dual
intersection complex} of the singular fibre $\X_0$. More precisely,
in a relatively simple situation,
suppose $f:\X\rightarrow D$ is relatively minimal (in the sense of Mori)
and normal crossings,
with $\X_0$ having irreducible components $X_1,\ldots,X_m$. The dual 
intersection complex of $\X_0$ is the simplicial complex with vertices
$v_1,\ldots,v_m$, and which contains a simplex $\langle v_{i_0},\ldots,
v_{i_p}\rangle$ if $X_{i_0}\cap\cdots\cap X_{i_p}\not=\emptyset$. The
idea that the dual intersection complex should play a role in describing
the base of the SYZ fibration was perhaps first suggested by Leung
and Vafa in \cite{LV}.

Let us explain roughly why this should be, first by looking at a standard
family of degenerating elliptic curves with periods $1$ and ${n\over 2\pi i}
\log t$ for $n$ a positive integer. Such a family over the punctured disk
is extended to a family over the disk by adding a Kodaira type 
$I_n$ (a cycle of $n$ rational curves) fibre over the origin.

Taking a sequence $t_i\rightarrow 0$ with $t_i$ real and positive gives
a sequence of elliptic curves of the form $X_{\epsilon_i}(B)$ where
$B=\RR/n\ZZ$ and
$\epsilon_i=-{2\pi\over\ln t_i}$. In addition, the metric on 
$X_{\epsilon_i}(B)$, properly scaled, comes from the constant Hessian
metric on $B$. So we wish to explain how $B$ is related to the geometry 
near the singular fibre. To this end,
let $X_1,\ldots,X_n$ be the irreducible components of $\X_0$; these
are all $\PP^1$'s. Let $P_1,\ldots,P_n$ be the singular points of $\X_0$.

We'll consider two sorts of open sets in $\X$. For the first type,
choose a coordinate $z$ on $X_i$, with $P_i$ given by $z=0$ and
$P_{i+1}$ given by $z=\infty$. Let $U_i\subseteq X_i$ be the open set
$\{z\,|\,\delta\le |z| \le 1/\delta\}$ for some small fixed $\delta$. Then one
can find a neighbourhood $\widetilde U_i$ of $U_i$ in $\X$ such that 
$\widetilde U_i$ is biholomorphic to $U_i\times D_{\rho}$ for $\rho>0$ sufficiently
small, $D_{\rho}$ a disk of radius $\rho$ in $\CC$, and
$f|_{\widetilde U_i}$ is the projection onto $D_{\rho}$. 

On the
other hand, each $P_i$ has a neighbourhood $\widetilde V_i$ in $\X$
biholomorphic to a polydisk
$\{(z_1,z_2)\in\CC^2\,|\,|z_1|\le \delta', |z_2|\le\delta'\}$ on which $f$
takes the form $z_1z_2$. 

If $\delta$ and $\delta'$ are chosen correctly,
then for $t$ sufficiently close to zero,
\[
\{\widetilde V_i\cap\X_t\,|\,1\le i\le n\}\cup \{\widetilde U_i\cap\X_t\,|\,1\le i\le n\}
\]
form an open cover of $\X_t$. Now each of the sets in this open cover
can be written as $X_{\epsilon}(U)$ for some $U$ a one-dimensional
(non-compact) affine manifold and
$\epsilon=-2\pi/\ln|t|$. If $U$ is an open interval $(a,b)\subseteq
\RR$, then $X_{\epsilon}(U)$ is biholomorphic to the annulus
\[
\{z\in\CC\,|\, e^{-2\pi b/\epsilon}\le |z|\le e^{-2\pi a/\epsilon}\}
\]
as $q=e^{2\pi i(x+i y)/\epsilon}$ is a holomorphic coordinate on
$X_{\epsilon}((a,b))$. 
Thus 
\[
\widetilde U_i\cap \X_t\cong X_{\epsilon}\left(\left({\epsilon\ln\delta\over 2\pi},
-{\epsilon\ln\delta\over 2\pi}\right)\right)
\]
with $\epsilon=-2\pi/\ln|t|$. As
$t\rightarrow 0$, the interval $(\epsilon\ln\delta/2\pi,
-\epsilon\ln\delta/2\pi)$ shrinks to a point. So $\widetilde U_i\cap
\X_t$ is a smaller and smaller open subset of $\X_t$ as $t\rightarrow 0$
when we view things in this way. This argument suggests that every
irreducible component should be associated to a point on $B$.

Now look at $\widetilde V_i\cap\X_t$. This is
\begin{eqnarray*}
\{(z_1,z_2)\in\CC^2\,|\,|z_1|,|z_2|<\delta', z_1z_2=t\}
&\cong&\{z\in\CC\,|\,|t|/\delta'\le |z|\le \delta'\}\\
&\cong& X_{\epsilon}\left({-\epsilon\over 2\pi}\ln\delta',
{\epsilon\over 2\pi} (\ln\delta'-\ln |t|)\right)
\end{eqnarray*}
with $\epsilon=-2\pi/\ln|t|$. This interval approaches the unit interval
$(0,1)$ as $t\rightarrow 0$. So the open set $\widetilde V_i\cap \X_t$ ends up being
a large portion of $\X_t$. We end up with $\X_t$, for small $t$, being
a union of open sets of the form 
$X_{\epsilon}((i+\epsilon',i+1-\epsilon'))$ (i.e., $\widetilde V_i\cap\X_{\epsilon}$)
and $X_{\epsilon}((i-\epsilon'',i+\epsilon''))$ (i.e., $\widetilde U_i\cap
\X_t$) for $\epsilon'$, $\epsilon''$ sufficiently small. These should glue,
at least approximately, to give $X_{\epsilon}(B)$. So we see
that irreducible components of $\X_0$ seem to coincide with points on $B$,
but intersections of components
coincide with lines. In this way we see the dual
intersection complex emerge.

\medskip

Let us make one more observation before beginning with rigorous
results in the next section. Suppose more generally we had a \emph{Gorenstein
toroidal
crossings} degeneration of Calabi-Yau manifolds $f:\X\rightarrow D$ (see
\cite{ss}). 
This means that every point $x\in\X$ has a neighbourhood isomorphic
to an open set in an affine Gorenstein (i.e., the canonical class is
a Cartier divisor) toric variety, with $f$ given
locally by a monomial which vanishes exactly
to order $1$ on each codimension one
toric stratum. This is a generalization of the notion of normal crossings.
Very roughly, the above argument suggests that each irreducible component
of the central fibre will correspond to a point of the Gromov-Hausdorff
limit. The following exercise shows what kind of contribution to $B$
to expect from a point $x\in\X_0$ which is a zero-dimensional stratum in
$\X_0$.

\begin{xca}
\label{gorensteinlimit}
Suppose that there is a point $x\in\X_0$ which has
a neighbourhood isomorphic to a neighbourhood of a dimension zero torus
orbit of an affine Gorenstein toric variety $Y_x$. 
Such an affine variety is
specified as follows.
Set $M=\ZZ^n$, $M_{\RR}=M\otimes_{\ZZ}\RR$, $N=\Hom_{\ZZ}(M,\ZZ)$,
$N_{\RR}=N\otimes_{\ZZ}\RR$ with $n=\dim\X_t$. Then
there is a lattice polytope $\sigma\subseteq M_{\RR}$, 
$C(\sigma):=\{(rm,r)\,|\, m\in\sigma,r\ge 0\}\subseteq
M_{\RR}\oplus\RR$, $P:=\dual{C(\sigma)}\cap (N\oplus\ZZ)$ the monoid
determined by the dual of the cone $C(\sigma)$, $Y_x
=\Spec \CC[P]$, and finally $f$ coincides with the monomial $z^{(0,1)}$.

Now let us take a small neighbourhood of $x$ of the form
\[
\widetilde 
U_{\delta}=\{y\in \Spec \CC[P]\,|\,\hbox{$|z^p|<\delta$ for all $p\in P$}\}.
\]
This is an open set as the condition $|z^p|<\delta$ can be tested on a
finite generating set for $P$, provided that $\delta<1$. 
Then show that for a given $t$, $|t|<1$ and $\epsilon=-2\pi/\log|t|$, if 
\[
\sigma_t:=\{m\in M_{\RR}\,|\,\hbox{$\langle p,(m,1)\rangle>{\log\delta\over
\log |t|}$ for all $p\in P$}\},
\]
then 
\[
f^{-1}(t)\cap \widetilde U_{\delta}\cong X_{\epsilon}(\sigma_t).
\]
Note that 
\[
\sigma:=\{m\in M_{\RR}\,|\,\hbox{$\langle p,(m,1)\rangle\ge 0$ for all $p\in P$}\},
\]
so $\sigma_t$ is an open subset of $\sigma$, and
as $t\rightarrow 0$, $\sigma_t$ converges to the interior of
$\sigma$. 
\qed
\end{xca}

This observation hopefully motivates the basic construction
of the next section.

\section{Toric degenerations, the intersection complex and its dual}

I will now introduce the basic objects of the program developed by
myself and Siebert to understand mirror symmetry in an algebro-geometric
context. This program was announced in \cite{Announce}, and has been developed
further in a series of papers \cite{PartI}, \cite{PartII}, \cite{Annals},
\cite{GBB}, \cite{JAMS}, \cite{GPS}.

The motivation for this program came from two different directions. The first,
which was largely my motivation, was the discussion of the
limiting form of the SYZ conjecture of the previous sections. The second
arose in work of
Schr\"oer and Siebert \cite{ssKod}, \cite{ss}, which led Siebert
to the idea that log structures on degenerations of Calabi-Yau manifolds
would allow one to view mirror symmetry as an operation performed on
degenerate Calabi-Yau varieties. Siebert observed that at a combinatorial
level, mirror symmetry exchanged data pertaining to the log structure
and a polarization. This will be explained more clearly in the following
section, when I introduce log structures. Together, Siebert and I realised
that the combinatorial data he was considering could be encoded naturally
in the dual intersection complex of the degeneration, which we saw in the
previous section appears to be the base of the SYZ fibration.
The combinatorial interchange of data necessary for mirror symmetry
then corresponded to a discrete Legendre transform on the dual
intersection complex. It became apparent that this approach provided
an algebro-geometrization of the SYZ conjecture.

To set this up properly, one has to consider what kind of degenerations
to allow. They should be maximally unipotent, of course, but there can
be many different birational models of degenerations. 
Below we define the notion of \emph{toric
degeneration}. The class of toric degenerations may seem rather restrictive,
but it appears to be the largest class of degenerations closed under
mirror symmetry: one can construct the mirror of a toric degeneration
as a toric degeneration. It does not appear that there is any other
natural family of degenerations with this property.
Much of the material in this section comes from \cite{PartI}, \S 4. 

Roughly put, a toric degeneration of Calabi-Yau varieties is a degeneration
whose central fibre is a union of toric varieties glued along toric strata,
and the total space of the degeneration is, off of some well-behaved
set $Z$ contained in the central fibre, locally toric with the family 
locally given
by a monomial. The precise technical definition is as follows.

\begin{definition}
\label{toricdegen}
Let $f:\X\rightarrow D$ be a proper flat family of relative dimension
$n$, where $D$ is a disk and $\X$ is a complex analytic space
(not necessarily non-singular). We say $f$ is a {\it toric degeneration}
of Calabi-Yau varieties if
\begin{enumerate}
\item
$\X_t$ is an irreducible normal Calabi-Yau variety with only
canonical singularities for $t\not=0$. (The reader may
like to assume $\X_t$ is smooth for $t\not=0$).
\item
If $\nu:\widetilde\X_0\to\X_0$ is the normalization,
then $\widetilde\X_0$ is a disjoint union of toric varieties,
the conductor locus $C\subseteq\widetilde\X_0$ is reduced,
and the map $C\to\nu(C)$ is unramified and generically
two-to-one. (The conductor locus is a naturally defined scheme structure
on the set where $\nu$ is not an isomorphism.)
The square
\[\begin{CD}
C@>>> \widetilde\X_0\\
@VVV @VV{\nu}V\\
\nu(C)@>>> \X_0
\end{CD}\]
is cartesian and cocartesian.
\item $\X_0$ is a reduced Gorenstein space
and the conductor locus $C$
restricted to each irreducible component of $\widetilde\X_0$ is the union
of all toric Weil divisors of that component. 
\item There exists a closed subset $Z\subseteq\X$ of relative
codimension $\ge 2$ such that $Z$ satisfies the following properties:
$Z$ does not contain the image under $\nu$
of any toric stratum of $\widetilde\X_0$,
and for any point $x\in \X\setminus Z$, there is a neighbourhood
$\widetilde U_x$ (in the analytic topology) of $x$, an
$n+1$-dimensional affine toric variety $Y_x$, a regular function
$f_x$ on $Y_x$ given by a monomial, and a commutative diagram
$$\begin{matrix}
\widetilde U_x&\mapright{\psi_x}&Y_x\cr
\mapdown{f|_{\widetilde U_{x}}}&&\mapdown{f_x}\cr
D'&\mapright{\varphi_x}&\CC\cr
\end{matrix}$$
where $\psi_x$ and $\varphi_x$ are open embeddings and $D'\subseteq D$. 
Furthermore,
$f_x$ vanishes precisely once on each toric divisor of $Y_x$.
\end{enumerate}
\end{definition}

\begin{example}
\label{quarticexample}
Take $\X$ to be defined by the equation $tf_4+z_0z_1z_2z_3=0$
in $\PP^3\times D$, where $D$ is a disk with coordinate
$t$ and $f_4$ is a general homogeneous quartic polynomial on $\PP^3$.
It is easy to see that $\X$ is singular at the locus
\[
\{t=f_4=0\}\cap Sing(\X_0).
\]
As $\X_0$ is the coordinate tetrahedron, the singular locus of
$\X_0$ consists of the 
six coordinate lines of $\PP^3$, and $\X$ has four singular points along
each such line, for a total of 24 singular points. 
Take $Z=Sing(\X)$. Then away from
$Z$, the projection $\X\rightarrow D$ is normal crossings, which
yields condition (4) of the definition of toric degeneration. It is easy
to see all other conditions are satisfied.
\end{example}

Given a toric degeneration $f:\X\rightarrow D$, 
we can build the \emph{dual intersection
complex} $(B,\P)$ of $f$, as follows. Here $B$ is an integral affine
manifold with singularities, and $\P$ is a \emph{polyhedral decomposition}
of $B$, i.e., a decomposition of $B$ into lattice polytopes. In fact, we 
will construct $B$ as a union of lattice polytopes.
Specifically,
let the normalisation of $\X_0$, $\widetilde \X_0$, be written as a disjoint
union $\coprod X_i$ of toric varieties $X_i$, $\nu:\widetilde\X_0\rightarrow\X_0$
the normalisation. The {\it strata} of $\X_0$ are the elements of
the set
\[
Strata(\X_0)=\{\nu(S)\,|\,
\hbox{$S$ is a toric stratum of $X_i$ for some $i$}\}.
\]
Here by toric stratum we mean the closure of a $(\CC^*)^n$ orbit.

Let $\{x\}\in Strata(\X_0)$ be a zero-dimensional stratum. 
Applying Definition \ref{toricdegen},(4), to a neighbourhood of $x$,
there is a toric variety $Y_x$ 
such that in
a neighbourhood of $x$, $f:\X\rightarrow D$ is locally isomorphic to
$f_x:Y_x\rightarrow\CC$, where $f_x$ is given by a monomial.
Now the condition that $f_x$ vanishes precisely
once along each toric divisor of $Y_x$ is the statement
that $Y_x$ is Gorenstein, and as such, it arises 
as in Exercise \ref{gorensteinlimit}. Indeed, let $M,N$ be given
in Exercise \ref{gorensteinlimit},
with $\rank M=\dim\X_0$. Then there
is a lattice polytope $\sigma_x\subseteq M_{\RR}$ such that $C(\sigma_x)
=\{(rm,r)|m\in\sigma, r\ge0\}$
is the cone defining the toric variety $Y_x$. As we saw in Exercise
\ref{gorensteinlimit}, a small neighbourhood of $x$ in $\X$ should contribute
a copy of $\sigma_x$ to $B$, which provides the motivation for our construction.
We can now describe how to construct $B$ by gluing together the polytopes
\[
\{\sigma_x\,|\, \{x\}\in Strata(\X_0)\}.
\]
We will do this in the case that every irreducible component
of $\X_0$ is in fact itself normal so that $\nu:X_i\rightarrow \nu(X_i)$ is an
isomorphism. The reader may be able to imagine the more
general construction.
With this normality assumption, there is a one-to-one inclusion reversing
correspondence between faces of
$\sigma_x$ and elements of $Strata(\X_0)$ containing $x$. We can then
identify faces of $\sigma_x$ and $\sigma_{x'}$ if they correspond
to the same strata of $\X_0$. Some argument is necessary to show that this
identification can be done via an integral affine transformation, but
again this is not difficult.

Making these identifications, one obtains $B$. One can then prove

\begin{lemma} If $\X_0$ is complex $n$-dimensional, then $B$ is an
real $n$-dimensional manifold.
\end{lemma}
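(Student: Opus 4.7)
The plan is to verify the manifold property of $B$ locally, by exploiting the Gorenstein toric local models from Definition~\ref{toricdegen}(4).

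First I would check the dimension count. Because $\X$ has complex dimension $n+1$ and $\psi_x$ is an open embedding, the affine toric variety $Y_x$ is $(n+1)$-dimensional, so its defining cone $C(\sigma_x)\subseteq M_\RR\oplus\RR$ is $(n+1)$-dimensional, and its height-one slice $\sigma_x = C(\sigma_x)\cap(M_\RR\times\{1\})$ is an $n$-dimensional lattice polytope. Any point in the relative interior of some maximal face $\sigma_x$ therefore already has an $\RR^n$-neighborhood in $B$.

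Next, suppose $p\in B$ lies in the relative interior of a face $\tau$ of dimension $d<n$. By the inclusion-reversing correspondence, $\tau$ corresponds to a stratum $S\in Strata(\X_0)$ of dimension $n-d$ whose closure contains $x$. After gluing, the faces of $B$ containing $\tau$ are in bijection with strata $S'$ satisfying $\bar S'\supseteq\bar S$: a stratum $S'$ of dimension $n-d'$ contributes a face of dimension $d'$, and the top-dimensional pieces ($d'=n$) are exactly the polytopes $\sigma_{x'}$ for the zero-dimensional strata $x'\in\bar S$. Showing that $B$ is an $n$-manifold at $p$ therefore reduces, via the join decomposition of the star, to showing that the link of $\tau$ in $B$ is homeomorphic to the sphere $S^{n-d-1}$.

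The key step is to describe this link using the toric local structure transverse to $S$. Because $Z$ has codimension $\ge 2$ and contains no toric stratum of $\widetilde\X_0$, a generic point $s\in S$ lies outside $Z$ and admits a toric local model $Y_s$. In this model $S$ extends to the orbit $O_\rho$ corresponding to a face $\rho$ of the defining cone $C_s$, and dimension counting forces $\dim\rho = d+1$. The strata $S'$ with $\bar S'\supseteq\bar S$ meeting a neighborhood of $s$ correspond bijectively to the faces of $C_s$ containing $\rho$; passing to the quotient $C_s/\mathrm{span}(\rho)$, these are the faces of an $(n-d)$-dimensional cone $C_s^{\perp}$. The Gorenstein condition (that $f_s$ is a monomial vanishing once on each toric divisor of $Y_s$) descends to $C_s^{\perp}$, so the latter has a well-defined height-one slice $\sigma^{\perp}$, an $(n-d-1)$-dimensional lattice polytope encoding the combinatorics of the strata in $\bar S$ near $s$.

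The principal obstacle is the final topological identification: one must show that the local link polytopes $L_{x'}\subseteq\sigma_{x'}$ opposite to $\tau$, as $x'$ ranges over the zero-dimensional strata in $\bar S$, glue along their common faces to produce precisely the boundary of $\sigma^{\perp}$, which will then be a topological $(n-d-1)$-sphere. This amounts to checking compatibility between the transverse combinatorial picture seen in the toric local model at $s$ and those seen at each $x'\in\bar S$, and ultimately rests on the consistency of the gluings already used in the construction of $B$ from the polytopes $\sigma_x$. The cleanest route is likely an induction on $n$: the transverse structure at $s$ is itself a lower-dimensional Gorenstein analogue of the original situation, so the inductive hypothesis identifies the link as a sphere. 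Combined with the dimension count from the first step, this yields the desired $n$-manifold structure on $B$.
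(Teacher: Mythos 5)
The heart of your proposal is to read off the link of a cell $\tau\in\P$ from the toric local model at a generic point $s$ of the corresponding stratum $X_\tau$. This does not work, because of the direction of the inclusion-reversing correspondence. The cells $\tau'\in\P$ with $\tau'\supsetneq\tau$, which are what make up the link, correspond to strata $S'$ with $\overline{S'}\subsetneq\overline{X_\tau}$, i.e., to strata properly contained in the closure of $X_\tau$. But a neighborhood of a generic point $s$ in the orbit $O_\rho$ of a toric variety only meets orbits $O_\sigma$ with $\sigma\subseteq\rho$; the orbits with $\sigma\supsetneq\rho$ lie in $\overline{O_\rho}\setminus O_\rho$ and are bounded away from $s$. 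So the faces of $C_s/\mathrm{span}(\rho)$ --- the faces of $C_s$ containing $\rho$ --- index strata that the local model at $s$ does not see, and $\sigma^\perp$ carries no information about the link. (There is also a sign inconsistency in your intermediate claim: strata with $\overline{S'}\supseteq\overline{S}$ correspond to \emph{faces of} $\tau$, not cells containing it; and even the corrected statement doesn't help, for the reason just given.) Likewise, the zero-dimensional strata $x'\in\overline{X_\tau}$ are scattered across $\X_0$ far from $s$, so there is no way the link polytopes $L_{x'}$ can be assembled from a single local model $Y_s$.

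The argument in [PartI], Proposition 4.10 (and implicitly in this section, in the discussion of fan structures) is global rather than transverse-local. The stratum $X_\tau$ is a complete (because $\X_0$ is proper) toric variety of dimension $n-d$, so its fan $\Sigma_\tau$ in $\RR^{n-d}$ is complete. The cells of $\P$ strictly containing $\tau$ are in bijection with the nonzero cones of $\Sigma_\tau$, and the fan structure $S_\tau$ identifies the star of $\tau$ with a neighborhood of the origin in $\RR^{n-d}$ (times the tangent directions along $\tau$). Completeness of $\Sigma_\tau$ is precisely what makes the star cover a full $\RR^n$-neighborhood of $\tau$, which is the manifold condition. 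Compactness of $\X_0$, and hence of each stratum, is the essential global input that the transverse local model at one generic point cannot supply.
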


See \cite{PartI}, Proposition 4.10 for a proof.

Now so far $B$ is just a topological manifold, constructed by gluing together
lattice polytopes. Let 
\[
\P=\{\sigma\subseteq B| \hbox{$\sigma$ is a face of $\sigma_x$ 
for some zero-dimensional stratum $x$}\}.
\]
There is a one-to-one inclusion reversing correspondence between strata
of $\X_0$ and elements of $\P$. 

It only remains to give $B$ an affine structure with singularities.
In fact, I shall describe somewhat more structure on $B$ derived
from $\X_0$ which in particular gives an affine structure with 
singularities on $B$.

First, for $\tau\in\P$, let
\[
U_{\tau}:=\bigcup_{\{\sigma\in\P\,|\,\tau\subseteq\sigma\}}\Int(\sigma).
\]
A \emph{fan structure  along $\tau\in\P$} is a continuous map $S_{\tau}:
U_{\tau}\rightarrow\RR^k$ such that
\begin{enumerate}
\item $S_{\tau}^{-1}(0)=\Int(\tau)$.
\item If $e:\tau\rightarrow\sigma$ is an inclusion then $S_{\tau}|_{\Int\sigma}$
is an integral affine submersion onto its image.
\item The collection of cones
\[
\{K_e:=\RR_{\ge 0} S_{\tau}(\sigma\cap U_{\tau})\,|\, e:\tau\rightarrow\sigma\}
\]
defines a finite fan $\Sigma_{\tau}$ in $\RR^k$.
\end{enumerate}
Two fan structures $S_{\tau},S'_{\tau}:U_{\tau}\rightarrow\RR^k$ are
considered \emph{equivalent} if they differ only by an integral linear
transformation of $\RR^k$.

If $S_{\tau}:U_{\tau}\rightarrow\RR^k$ is a fan structure along $\tau\in\P$
and $\sigma\supseteq \tau$ then $U_{\sigma}\subseteq U_{\tau}$.
The fan structure along $\sigma$ \emph{induced by $S_{\tau}$} is the
composition
\[
U_{\sigma}\mapright{} U_{\tau} \mapright{S_{\tau}}\RR^k\mapright{}
\RR^k/L_{\sigma}\cong \RR^{\ell}
\]
where $L_{\sigma}\subseteq\RR^k$ is the linear span of $S_{\tau}(\sigma)$.

\begin{definition}
An \emph{integral tropical manifold of dimension $n$} is a pair $(B,\P)$
as above along with a choice of fan structure $S_v$ at each vertex
$v$ of $\P$, with the property that if $v,w\in\tau$, then the fan structures
along $\tau$ induced by $S_v$ and $S_w$ are equivalent.
\end{definition}

Such data gives $B$ the structure of an integral affine manifold with
singularities. Let $\Gamma\subseteq B$ be the union of those cells of
$\Bar(\P)$ (the first barycentric subdivision of $\P$) which are not contained
in maximal cells of $\P$ nor contain vertices of $\P$. Then $B_0:=B\setminus
\Gamma$ can be covered by
\[
\{\Int(\sigma)\,|\,\sigma\in\P_{\max}\}\cup \{W_v\,|\,
\hbox{$v\in\P$ a vertex}\}
\]
for certain open neighbourhoods $W_v$ of $v\in B$ contained in $U_v$.
We define an affine structure
on $B_0$ by giving $\Int(\sigma)$ the natural affine structure given by
$\sigma$ being a lattice polytope, while $S_v:U_v\rightarrow \RR^n$
restricts to an affine chart on $W_v$.

Finally, the point is that the structure of $\X_0$ gives rise
to an integral tropical manifold structure on $(B,\P)$. Indeed, each
vertex $v\in \P$ corresponds to an irreducible component $X_v$ of 
$\X_0$ and this
irreducible component is a toric variety with fan $\Sigma_v$ in $\RR^n$.
Furthermore, there is a one-to-one
correspondence between $p$-dimensional
cones of $\Sigma_v$ and $p$-dimensional cells
of $\P$ containing $v$ as a vertex, as they both correspond to 
strata of $\X_0$ contained in $X_v$. There is then a continuous map
\[
\psi_v:U_v\rightarrow \RR^n
\]
which takes $U_v\cap\sigma$, for any $\sigma\in\P$ containing $v$ as a vertex,
into the corresponding cone of $\Sigma_v$
\emph{integral affine linearly}. Such a map is uniquely determined
by the combinatorial correspondence and the requirement that it
be integral affine linear on each cell. These maps define a fan structure
at each vertex. Furthermore, these fan structures are compatible in
the sense that if $v,w\in\tau$, the two induced fan structures on $U_{\tau}$
are equivalent. This follows because there is a well-defined fan
$\Sigma_{\tau}$ defining the stratum corresponding to $\tau$.

\begin{example} Let $f:\X\rightarrow D$ be a degeneration of elliptic
curves to an $I_n$ fibre. Then $B$ is the circle $\RR/n\ZZ$, decomposed
by $\P$ into $n$ line segments of length one.
\end{example}

\begin{example}
Continuing with Example \ref{quarticexample}, the dual intersection
complex is the boundary of a tetrahedron, with each face affine isomorphic
to a standard two-simplex, and the affine structure near each vertex 
makes the polyhedral decomposition look locally like the fan for $\PP^2$.
There is one singularity at the barycenter of each edge, and one can calculate
that the monodromy of $\Lambda$ about each of these singularities is
$\begin{pmatrix} 1&4\\ 0&1\end{pmatrix}$ in a suitable basis.
\end{example}

\begin{example}
Consider the polytope $\Delta$ of Example \ref{quintic}. The dual polytope
$\nabla$ is the convex hull of the points $(-1,-1,-1,-1),(1,0,0,0),\ldots,
(0,0,0,1)$. The corresponding projective toric variety $\PP_{\nabla}$
has a crepant resolution $X_{\Sigma}\rightarrow\PP_{\nabla}$ where $\Sigma$
is the fan consisting of cones over all elements of the decomposition
$\P$ of $\partial\Delta$ as described in Example \ref{quintic}. 
Consider in $\PP_{\nabla}\times\AA^1$ the
degenerating family $\shX\rightarrow\AA^1$ of Calabi-Yau manifolds given by
\[
s_0+t\sum_{m\in \nabla\cap \ZZ^4} c_ms_m=0
\]
where $s_m$ is the section of $\shO_{\PP_{\nabla}}(1)$ corresponding
to $m\in\nabla\cap\ZZ^4$. Let $\widetilde\shX$ be the proper transform
of $\shX$ in $X_{\Sigma}\times\AA^1$. 
Then the family $\widetilde\shX\rightarrow \AA^1$
is a toric degeneration with general fibre the mirror quintic, and its 
dual intersection complex is the affine manifold $B$ constructed in
Example \ref{quintic}.
\end{example}

Is the dual intersection complex the right affine manifold with singularities?
The following theorem provides evidence for this, and gives the connection
between this construction and the SYZ conjecture.

\begin{theorem}
\label{complextheorem}
Let $\X\rightarrow D$ be a toric degeneration, with dual intersection
complex $(B,\P)$. Then there is an open set $U\subseteq B$ such that 
$B\setminus U$ retracts onto the discriminant locus $\Gamma$ of $B$,
and an open subset $\U_t$ of $\shX_t$ which is biholomorphic
to a small deformation of a twist of $X_{\epsilon}(U)$,
where $\epsilon=O(-1/\ln|t|)$. 
\end{theorem}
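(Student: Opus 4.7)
The plan is to cover a neighborhood of $\X_0$ in $\X$ by toric charts coming from condition~(4) of Definition~\ref{toricdegen}, to identify each chart fiber over $t\ne 0$ with an open subset of a semi-flat model $X_\epsilon(\cdot)$ via Exercise~\ref{gorensteinlimit}, and then to glue these identifications so as to recognize a well-chosen open subset $\U_t\subseteq \X_t$ as a small deformation of a twist of $X_\epsilon(U)$.

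The charts and the set $U$ arise naturally. For each zero-dimensional stratum $\{x\}$ I use $\widetilde U_{x,\delta}=\{y\in Y_x\,|\,|z^p|<\delta\text{ for all }p\in P_x\}$, whose fiber over $t$ is, by Exercise~\ref{gorensteinlimit}, biholomorphic to $X_\epsilon(\sigma_{x,t})$ with $\epsilon=-2\pi/\log|t|$ and $\sigma_{x,t}\subseteq\sigma_x$ an open subset converging to $\Int(\sigma_x)$ as $t\to 0$. For strata of positive dimension, condition~(4) of Definition~\ref{toricdegen} furnishes an analogous toric model whose fiber over $t$ is an open subset of some $X_\epsilon(V_\tau)$, where $V_\tau$ is a neighborhood in $B$ of the cell dual to the stratum. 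I take $U$ to be the union of all these pieces; since $Z$ has relative codimension $\ge 2$ and misses the images of all toric strata, the complement $B\setminus U$ deformation-retracts along $\Bar(\P)$ onto the discriminant locus $\Gamma$, which by construction is the subcomplex of $\Bar(\P)$ consisting of cells that avoid both the vertices and the interiors of maximal cells.

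The core of the argument is to glue the local descriptions. Where two charts $\widetilde U_x$ and $\widetilde U_{x'}$ overlap in a neighborhood of the intermediate stratum corresponding to a common face $\tau$ of $\sigma_x$ and $\sigma_{x'}$, the integral tropical manifold structure on $(B,\P)$ is arranged precisely so that the fan structures induced from $x$ and $x'$ coincide up to integral affine equivalence on $U_\tau$. This forces the two local descriptions of $\X_t$ to agree with the semi-flat gluing of $X_\epsilon(\sigma_{x,t})$ and $X_\epsilon(\sigma_{x',t})$ along their common face, up to two corrections: (i)~a fiberwise translation by a section of $\Lambda_\RR/\Lambda$, and (ii)~a complex-analytic perturbation of order $O(t)$ of the transition maps. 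Assembling the translations into a \v{C}ech $1$-cocycle on the cover of $U$ gives the twist, and assembling the perturbations gives the small deformation of $X_\epsilon(U)$.

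The main obstacle lies in verifying (i) and (ii): one must determine precisely what the transition maps between the toric local models look like when rewritten in the semi-flat coordinates $q_j=e^{2\pi i(x_j+iy_j)/\epsilon}$, uniformly in $t$. This requires choosing the thresholds $\delta_x$ in each chart as explicit functions of $|t|$ so that the rescaled semi-flat coordinates match the toric coordinates to leading order as $t\to 0$. With this scaling in hand, the transition maps between two toric charts become explicit monomial maps whose deviation from the corresponding semi-flat transition maps can be read off by Taylor expansion in $t$: the constant term in $t$ gives the translation cocycle and the remaining terms give an $O(t)$ deformation. What is left is the \v{C}ech-theoretic bookkeeping of assembling these local comparisons into a globally coherent twist-and-deformation description of $\U_t$, which reduces to checking the cocycle condition on triple overlaps of the cover.
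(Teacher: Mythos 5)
The paper does not in fact give a proof of Theorem~\ref{complextheorem}; immediately after the statement it says that the statement itself is informal, that a more precise version appears in \cite{Announce}, and that the actual proof ``will eventually appear in \cite{tori},'' a paper listed as in preparation. So there is no proof in the present text to compare against. Your proposal is, in outline, exactly the strategy that the heuristic discussion in \S 6 (and Exercise~\ref{gorensteinlimit}) is designed to suggest: cover a tube around $\shX_0$ by the local toric models of Definition~\ref{toricdegen}~(4), identify each one in the fiber over $t$ with a semi-flat piece $X_{\epsilon}(\cdot)$ with $\epsilon\sim -1/\log|t|$, glue, and read off the twist as a \v{C}ech cocycle of fiber translations plus an $O(t)$ deformation.

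That said, the last paragraph of your write-up is where the genuine mathematical content of the theorem lives, and there it remains a sketch rather than a proof. The difficulty is precisely to obtain, uniformly as $t\to 0$, the comparison between the toric transition maps and the semi-flat ones, and to check that the discrepancy splits cleanly into a flat translation term and a higher-order correction; this is the part that the paper itself says requires a separate paper. Two more concrete imprecisions: (a) the region $\sigma_{x,t}\subseteq\sigma_x$ produced by Exercise~\ref{gorensteinlimit} depends on $t$, whereas the theorem asserts the existence of a single fixed open $U\subseteq B$; you should instead fix $U$ with $B\setminus U$ retracting onto $\Gamma$ and observe that for $|t|$ small enough $U$ sits inside all the relevant $\sigma_{x,t}$'s and their positive-dimensional analogues. (b) You assert without argument that for positive-dimensional strata one gets models whose fibers are open subsets of $X_{\epsilon}(V_{\tau})$; this needs the compatibility of fan structures to be checked at the level of the actual toric charts, not just combinatorially, and the regions must be chosen so that the two chart types fit together away from $Z$ and from $\Gamma$. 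None of these observations shows your plan is wrong---it is the expected plan---but as written it does not yet constitute a proof, which is consistent with the paper's own treatment of the statement as an announced result rather than a proved one.
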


We will not be precise here about what we mean by small deformation;
by twist, we mean a twist of the complex structure of $X_{\epsilon}(U)$
by a $B$-field. See \cite{Announce} 
for a much more precise statement; the above statement
is meant to give a feel for what is true. The proof, along with much more
precise statements, will eventually appear in \cite{tori}.

\medskip

If $\X\rightarrow D$ is a \emph{polarized} toric degeneration, i.e., if there
is a relatively ample line bundle $\shL$ on $\X$, then we can construct
another integral tropical manifold
$(\check B,\check\P)$, which we call the \emph{intersection complex},
as follows.

For each irreducible component $X_i$ of $\X_0$, $\shL|_{X_i}$ is an ample
line bundle on a toric variety. Let $\check\sigma_i\subseteq N_{\RR}$
denote the Newton polytope of this line bundle. There is then
a one-to-one inclusion preserving correspondence between strata of
$\X_0$ contained in $X_i$ and faces of $\check\sigma_i$. We can then glue together
the $\check\sigma_i$'s in the obvious way: if $Y$ is a codimension one
stratum of $\X_0$, it is contained in two irreducible components $X_i$ and
$X_j$, and defines faces of $\check\sigma_i$ and $\check\sigma_j$. 
These faces are affine
isomorphic because they are both the Newton polytope of $\shL|_Y$, and
we can then identify them in the canonical way. Thus we obtain a topological
space $\check B$ with a polyhedral decomposition $\check\P$. 

To define the fan structure at a vertex $v\in\P$, note that such a vertex
corresponds to a zero-dimensional stratum of $\X_0$, giving rise
to a maximal cell $\sigma_v$ of the dual intersection complex. Take the
fan structure at $v$ to be defined using the normal fan 
$\check\Sigma_v$ to $\sigma_v$. Then there is
a one-to-one inclusion preserving correspondence between cones in
$\check\Sigma_v$ and strata of $\X_0$ containing the stratum corresponding
to $v$. This correspondence allows us to define a fan structure
\[
S_v:U_v\rightarrow \RR^n
\]
which takes $U_v\cap\check\sigma$, 
for any $\check\sigma\in\check\P$ containing $v$ as a vertex,
into the corresponding cone of $\check\Sigma_v$. One checks easily that
this set of fan structures satisfies the definition of integral tropical
manifold, 
and hence defines the intersection complex $(\check B,\check\P)$.

Analogously to Theorem \ref{complextheorem}, we expect

\begin{conjecture}
Let $\X\rightarrow D$ be a polarized toric degeneration, 
with intersection complex
$(\check B,\check\P)$. 
Let $\omega_t$ be a K\"ahler form on $\X_t$ representing the first Chern
class of the polarization.
Then there is an open set $\check U\subseteq \check B$ 
such that $\check B\setminus \check U$ retracts onto the discriminant locus
$\Gamma$ of $\check B$, such that $\X_t$ is a symplectic compactification
of $\check X(\check U)$ for any $t$.
\end{conjecture}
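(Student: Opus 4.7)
The plan is to construct a symplectic torus fibration $\check f_t : \U_t \to \check U$ on a large open subset $\U_t \subseteq \X_t$ using moment maps on local toric models determined by the polarization, and then identify $(\U_t, \omega_t|_{\U_t})$ with $\check X(\check U)$. This parallels Theorem \ref{complextheorem} on the complex side, but now we exploit the polarization $\shL$ rather than the monomial defining $f$.

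First I would work locally near a zero-dimensional stratum $\{x\} \in \mathrm{Strata}(\X_0)$, which corresponds to a vertex $v \in \check \P$. By Definition \ref{toricdegen}(4), a neighbourhood of $x$ in $\X$ is modelled on an affine Gorenstein toric variety $Y_x$, and the polarization $\shL$ restricts on each irreducible component of $\X_0$ containing $x$ to a very ample toric line bundle whose Newton polytope is by construction the maximal cell $\check \sigma_v \in \check \P$. The natural $(\CC^*)^n$-action on $Y_x$ preserves $f_x$, so it restricts to a fibrewise action on $f_x^{-1}(t)$, and the moment map for its compact subtorus $T^n$ with respect to a K\"ahler representative of $c_1(\shL)$ is a Lagrangian fibration onto (an open subset of) $\check\sigma_v$, away from a codimension-two locus where the action fails to be free. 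This is Ruan's construction \cite{Ruan} in the toric hypersurface setting, and the more general machinery of Casta\~no-Bernard--Matessi \cite{CastMat} provides the symplectic compactification across the boundary strata in dimension three.

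Next I would glue the local fibrations. On the overlap of two neighbourhoods $\widetilde U_x, \widetilde U_{x'}$ associated to vertices $v,v'$ of $\check \P$, the two local moment maps take values in adjacent maximal cells $\check\sigma_v, \check\sigma_{v'}$ of $\check\P$, and the compatibility of the fan structures at $v$ and $v'$ along $\tau = \check\sigma_v \cap \check\sigma_{v'}$ (which is built into the definition of integral tropical manifold) guarantees the two moment-map images agree integrally-affinely on the common face. An Arnold--Liouville style argument, plus Moser's trick applied fibrewise away from the critical locus of the $T^n$-action, identifies the glued fibration symplectically with $\check X(\check U) = T^*_{\check U}/\check \Lambda$, where $\check U \subseteq \check B$ is the union of the interiors of the maximal cells together with open neighbourhoods of the vertices. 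The compatibility of the K\"ahler forms on overlaps can be arranged by a partition-of-unity correction in the same cohomology class, exploiting that $[\omega_t]$ is global.

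The open set $\check U$ will miss exactly a neighbourhood of the codimension-two locus in each cell of $\check\P$ where the local toric symmetry breaks down, and this locus retracts onto $\Gamma$ by the general combinatorial description of $\Gamma \subseteq \check B$ in terms of $\Bar(\check\P)$. The main obstacle, I expect, is exactly this gluing step in arbitrary dimension: in dimension three it is carried out in \cite{CastMat}, but in higher dimensions one has to produce a genuine Lagrangian compactification across the corners of $\check \sigma_v$, where the $T^n$-action degenerates in multiple directions simultaneously. A secondary difficulty is the dependence on choices: the construction a priori depends on the choice of K\"ahler representative of $c_1(\shL)$ and of the local equivariant structure, and one must check that different choices yield symplectomorphic fibrations in a way compatible with the identification with $\check X(\check U)$. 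In practice one would likely proceed by first normalising $\omega_t$ near $\X_0$ using the toric structure (so that it agrees with the standard toric K\"ahler form on each $\widetilde U_x$), and then extend to all of $\X_t$ using the fact that $\X_t \setminus \U_t$ is of relative codimension at least two and so imposes no cohomological obstruction.
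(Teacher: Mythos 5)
This statement appears in the paper only as a conjecture, with no proof given; the sentence immediately following it says it should be amenable to the techniques of W.-D.\ Ruan \cite{RuanJSG} but that such an approach has not been carried out in general. There is therefore no argument of the paper's to check your sketch against, and what you have written is, in spirit, the strategy the paper anticipates. You are also right to single out the essential obstruction: a Lagrangian compactification across the corner strata of $\check\P$ in dimension $>3$, with \cite{CastMat} being the state of the art in dimension three. That is precisely why this remains a conjecture.

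Two places in the sketch need repair before they could count as steps of a proof. First, the indexing of $\check\P$: the maximal cells of $\check\P$ are the Newton polytopes $\check\sigma_i$ of the restrictions $\shL|_{X_i}$ and hence are labelled by the irreducible components $X_i$ of $\X_0$ (vertices of $\P$), not by vertices of $\check\P$. A zero-dimensional stratum $\{x\}$ gives a vertex $v$ of $\check\P$ lying in several maximal cells $\check\sigma_{i_1},\ldots,\check\sigma_{i_m}$, one for each component $X_{i_j}$ through $x$; your local moment-map image near $x$ should be an open neighbourhood of $v$ meeting all of these cells, not ``an open subset of the maximal cell $\check\sigma_v$'' --- there is no single maximal cell attached to a vertex of $\check\P$. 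Producing that neighbourhood from the local model $Y_x$ requires choosing the K\"ahler potential near $x$ compatibly with all of $\shL|_{X_{i_1}},\ldots,\shL|_{X_{i_m}}$ at once, which is nontrivial and should not be deferred as a ``normalisation.'' Second, the method in \cite{Ruan}, \cite{RuanJSG} is a gradient-flow construction, not a moment-map gluing: Ruan flows the standard toric Lagrangian fibration of the ambient toric variety onto the Calabi--Yau hypersurface. Your alternative (local moment maps plus fibrewise Moser) is reasonable away from the discriminant, but the identification of the result with $\check X(\check U)=T^*_{\check U}/\check\Lambda$ requires showing that the action coordinates of $\omega_t$ --- the period integrals of $\omega_t$ over a basis of $1$-cycles in the torus fibres --- induce exactly the integral affine structure on $\check B$ coming from the fan structures of $\check\P$, not merely that cohomology classes on overlaps agree. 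That affine-geometric matching, which is where the polarization enters quantitatively, is the content your Moser step is silently assuming.
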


I don't expect this to be particularly difficult: it should be amenable
to the techniques of W.-D.\ Ruan \cite{RuanJSG}, but such an approach
has not been carried out in general.

\medskip

The relationship between the intersection complex and the dual intersection
complex can be made more precise by introducing multi-valued piecewise
linear functions, in analogy with the multi-valued convex functions of
Definition \ref{multivaluedconvex}.

\begin{definition}
Let $(B,\P)$ be an integral tropical manifold.
Then a \emph{multi-valued piecewise linear function}
$\varphi$ on $B$ is a collection of continuous functions on an open cover
$\{(U_i,\varphi_i)\}$ such that $\varphi_i$ is affine linear on each
cell of $\P$ intersecting $U_i$, and on $U_i\cap U_j$, $\varphi_i-\varphi_j$
is affine linear. Furthermore, for any $\tau\in\P$, let $S_{\tau}:U_{\tau}
\rightarrow \RR^k$ be the induced fan structure. Then there is a piecewise
linear function $\varphi_{\tau}$ on the fan $\Sigma_{\tau}$ such that
on $U_i\cap U_{\tau}$, $\varphi_i-\varphi_{\tau}\circ S_{\tau}$ is affine
linear. Here we will always assume that each linear part of $\varphi_i$
has differential in $\check\Lambda$, i.e., $\varphi_i$ has integral slopes.
\end{definition}

The rather technical condition on the local behaviour of each $\varphi_i$
on $U_{\tau}$ comes from the idea that such a multi-valued piecewise linear
function is really just a collection of piecewise linear functions on
the fans $\Sigma_{\tau}$ given by the fan structure of $(B,\P)$. These
functions need to satisfy some compatibility conditions, and this compatibility
is motivated by the following discussion.

Suppose we are given a 
polarized toric degeneration $\X\rightarrow D$. We in fact obtain a 
multi-valued piecewise
linear function $\varphi$ on the dual intersection complex $(B,\P)$
as follows. Restricting to any toric stratum $X_{\tau}$, $\shL|_{X_{\tau}}$
is determined completely by an integral piecewise linear function
$\varphi_{\tau}$ on $\Sigma_{\tau}$, well-defined up to
a choice of linear function. Pulling back this piecewise linear function
via $S_{\tau}$ to $U_{\tau}$, we obtain a collection of piecewise
linear functions $\{(U_{\tau},\varphi_{\tau}\circ S_{\tau})\,|\,\tau\in\P\}$.
The fact that $(\shL|_{X_{\tau}})|_{X_{\sigma}}=\shL|_{X_{\sigma}}$
for $\tau\subseteq\sigma$ implies that on overlaps $\varphi_{\sigma}
\circ S_{\sigma}$
and $\varphi_{\tau}\circ S_{\tau}$ differ by at most an affine linear function. 
So $\{(U_{\tau},
\varphi_{\tau}\circ S_{\tau})\}$ 
defines a multi-valued piecewise linear function.
The last condition in the definition of multi-valued piecewise linear
function then reflects the need for the function to be locally
a pull-back of a function via $S_{\sigma}$ in a neighbourhood of $\sigma$.

If $\shL$ is ample, then the piecewise linear function determined by
$\shL|_{X_{\sigma}}$ is strictly convex. So we say a multi-valued piecewise
linear function is \emph{strictly convex} if $\varphi_{\tau}$ is
strictly convex for each $\tau\in\P$. 

As a consequence, if $\shX\rightarrow D$ is a polarized toric degeneration,
we will write $(B,\P,\varphi)$ for the data of the dual intersection
complex and the induced multi-valued function $\varphi$. We call this
triple the dual intersection complex of the polarized degeneration.

\medskip

Now suppose we are given abstractly a triple $(B,\P,\varphi)$ with $(B,\P)$
an integral tropical manifold and $\varphi$ a strictly convex multi-valued
piecewise linear function on $B$. Then we construct the \emph{discrete
Legendre transform} $(\check B,\check\P,\check\varphi)$ of $(B,\P,\varphi)$
as follows.

$\check B$ will be constructed by gluing together Newton polytopes.
If we view, for $v$ a vertex of $\P$, the fan $\Sigma_v$ as living
in $M_{\RR}$, then the Newton polytope of $\varphi_v$ is
\[
\check v:=\{x\in N_{\RR}\,|\,\langle x,y\rangle\ge-\varphi_v(y)
\quad\forall y\in M_{\RR}\}.
\]
There is a one-to-one inclusion reversing correspondence between faces of
$\check v$ and cells of $\P$ containing $v$. Furthermore, if
$\sigma$ is the smallest cell of $\P$ containing two vertices $v$ and $v'$,
then the corresponding faces of $\check v$ and $\check v'$ are integral 
affine isomorphic, as they are both isomorphic to the Newton polytope
of $\varphi_{\sigma}$. Thus we can glue $\check v$ and $\check v'$
along this common face. After making all these identifications, we obtain a cell
complex $(\check B,\check\P)$, which is really just the dual cell complex
of $(B,\P)$. This is given an integral tropical structure by taking the
fan structure at a vertex $\check\sigma$, for $\sigma\in\P_{\max}$,
to be given by the normal fan to $\sigma$.

Finally, the function $\varphi$ has a discrete Legendre transform
$\check\varphi$ on $(\check B,\check\P)$. We have no choice but to
define $\check\varphi$ in a neighbourhood of a vertex $\check\sigma\in
\check\P$ dual to a maximal cell $\sigma\in\P$ to be a piecewise
linear function whose Newton polytope is $\sigma$, i.e.,
\[
\check\varphi_{\check\sigma}(y)
=-\inf\{\langle y,x\rangle\,|\,x\in\sigma\subseteq M_{\RR}\}.
\]
This gives $(\check B,\check\P,\check\varphi)$, the discrete
Legendre transform of $(B,\P,\varphi)$. If $B$ is $\RR^n$, then
this coincides with the classical notion of discrete Legendre
transform. The discrete Legendre transform has several relevant 
properties:
\begin{itemize}
\item The discrete Legendre transform of $(\check B,\check\P,\check\varphi)$
is $(B,\P,\varphi)$.
\item If we view the underlying topological spaces $B$ and $\check B$
as identified by being the underlying space of dual cell complexes,
then $\Lambda_{B_0}\cong \check\Lambda_{\check B_0}$ and
$\check\Lambda_{B_0}\cong\Lambda_{\check B_0}$, where the subscript
denotes which affine structure is being used to define $\Lambda$ or
$\check\Lambda$. 
\end{itemize}

This hopefully makes it clear that the discrete Legendre transform
is a suitable replacement for the duality provided by the
Legendre transform of \S 2.

\medskip

Note in particular that if $\shX\rightarrow D$ is a polarized toric 
degeneration, with dual intersection complex $(B,\P,\varphi)$, then
the discrete Legendre transform $(\check B,\check\P,\check\varphi)$
satisfies the condition that $(\check B,\check\P)$ is the intersection
complex of the polarized degeneration. The function $\check\varphi$
is some extra information on $\check B$, which from the definition
of discrete Legendre transform encodes the cells of $\P$. These cells
of the dual intersection complex were defined using the local toric
structure of $\shX\rightarrow D$.
So $\check\varphi$ can be seen as carrying information
about this local toric structure. 
We will say $(\check B,\check\P,\check\varphi)$
is the intersection complex of the polarized toric degeneration
$\shX\rightarrow D$.

So we see that for $(B,\P,\varphi)$, $\P$ carries information about the
log structure and $\varphi$ carries information about the polarization,
but for $(\check B,\check\P,\check\varphi)$, $\check\P$ carries information
about the polarization and $\check\varphi$ carries information about
the log structure. Mirror symmetry interchanges these two pieces of
information!

We can now state an \emph{algebro-geometric
SYZ procedure}. In analogy with the procedure suggested in \S 5, we
could follow these steps:
\begin{enumerate}
\item We begin with a toric degeneration of Calabi-Yau
manifolds $\X\rightarrow D$ with an ample polarization.
\item Construct the dual intersection complex $(B,\P,\varphi)$ 
from this data, as explained above.
\item Perform the discrete Legendre transform to obtain $(\check B,
\check\P,\check\varphi)$.
\item Try to construct a polarized degeneration of Calabi-Yau
manifolds $\check\X\rightarrow D$ whose dual intersection
complex is $(\check B,\check\P,\check\varphi)$, or whose intersection
complex is $(B,\P,\varphi)$.
\end{enumerate}

\begin{example} The discrete Legendre transform enables us to reproduce 
Batyrev duality \cite{Bat}. Let $\Delta\subseteq M_{\RR}$ be a reflexive polytope,
$\nabla\subseteq N_{\RR}$ the polar dual, and assume $0\in\Delta$ is
the unique interior point. We then obtain two toric degenerations
given by the equations
\[
s_0+t\sum_{m\in M\cap\Delta} c_ms_m=0,\quad\quad s_0+t\sum_{n\in N\cap
\nabla} c_ns_n=0
\]
in $\PP_{\Delta}\times\AA^1$ and $\PP_{\nabla}\times\AA^1$ respectively,
with $s_m$ ($s_n$) the section of $\shO_{\PP_{\Delta}}(1)$ corresponding
to $m$ (the section of $\shO_{\PP_{\nabla}}(1)$ corresponding to $n$).
It is easy to check that the dual intersection complexes of these
two degenerations are given as follows. For the first degeneration, 
$B=\partial\nabla$ with polyhedral decomposition given by the proper faces of
$\nabla$. The fan structure at each vertex $v$ is given by projection
$U_v\hookrightarrow N_{\RR}\rightarrow N_{\RR}/\RR v$. For the second
degeneration, one uses $\Delta$ instead of $\nabla$. One can then check
that if one polarizes the two degenerations using $\shO_{\PP_{\Delta}}(1)$
and $\shO_{\PP_{\nabla}}(1)$ respectively, then the corresponding triples
$(B,\P,\varphi)$ are Legendre dual. Thus Batyrev duality is a special
case of this general approach to a mirror construction.

For a much more general construction which works for the Batyrev-Borisov
construction \cite{BB} of mirrors of complete intersection Calabi-Yaus in toric
varieties, see \cite{GBB}.
\end{example}

The only step missing in this mirror symmetry algorithm is the last: 
\begin{question}[The reconstruction problem, Version II]
\label{reconstruct2}
Given $(B,\P,\varphi)$, is it possible to construct a polarized toric
degeneration $\X\rightarrow D$ whose intersection complex is
$(B,\P,\varphi)$?
\end{question}

One could hope to solve this problem via naive deformation theory,
by constructing the central fibre $\X_0$
from the data $(B,\P,\varphi)$, and
then deforming this to find a smoothing. However, 
as initially observed in the normal crossings case by Kawamata and
Namikawa in \cite{KN}, one needs to put some additional structure on $\X_0$
before it has good deformation theory. This structure is a \emph{log
structure}, and introducing log structures allows us to study
many aspects of mirror symmetry directly on the degenerate fibre itself.
So let us turn to a review of the theory of logarithmic structures.

\section{Log structures}

We review the notion of  log structures of Fontaine-Illusie and Kato
(\cite{Illu}, \cite{K.Kato}). These play a key role in trying to understand
mirror symmetry via degenerations.

\begin{definition}
A log structure on a scheme (or analytic space) $X$ is a (unital) homomorphism
$$\alpha_X:\shM_X\rightarrow \O_X$$
of sheaves of (multiplicative and commutative) monoids inducing an isomorphism
$\alpha_X^{-1}(\O_X^{\times})\rightarrow \O_X^{\times}$. The monoid
structure on $\O_X$ is given by multiplication. The
triple $(X,\shM_X,\alpha_X)$ is then called a {\it log space}.
We often write the whole package as $X^{\dagger}$.
\end{definition}

A morphism of log spaces $F:X^{\dagger}\rightarrow Y^{\dagger}$ consists
of a morphism $\underline{F}:X\rightarrow Y$ of underlying
spaces together with a homomorphism $F^{\#}:\underline{F}^{-1}(\shM_Y)
\rightarrow\shM_X$ commuting with the structure homomorphisms:
$$\alpha_X\circ F^{\#}=\underline{F}^*\circ\alpha_Y.$$

The key examples:

\begin{examples}
\label{logexamples}
(1) Let $X$ be a scheme and $Y\subseteq X$ a closed subset of
codimension one. Denote by $j:X\setminus Y\rightarrow X$
the inclusion. Then the inclusion
$$\alpha_X:\shM_X=j_*(\O_{X\setminus Y}^{\times})\cap\O_X\rightarrow
\O_X$$
of the sheaf of regular functions invertible off of $Y$ is a log
structure on $X$. This is called a \emph{divisorial log structure} on
$X$.

(2) A {\it prelog structure}, i.e., an arbitrary homomorphism of
sheaves of monoids $\varphi:\shP\rightarrow\O_X$, defines
an associated log structure $\shM_X$ by
$$\shM_X=(\shP\oplus\O_X^{\times})/\{(p,\varphi(p)^{-1})\,|\,p\in
\varphi^{-1}(\O_X^{\times})\}$$
and $\alpha_X(p,h)=h\cdot\varphi(p)$.

(3) If $f:X\rightarrow Y$ is a morphism of schemes and $\alpha_Y:\shM_Y
\rightarrow\O_Y$ is a log structure on $Y$, then the prelog structure
$f^{-1}(\shM_Y)\rightarrow\O_X$ given as the composition of
$\alpha_Y:f^{-1}(\shM_Y)\rightarrow f^{-1}\shO_Y$ and $f^*:f^{-1}\shO_Y
\rightarrow \shO_X$ defines an associated log structure
on $X$, the {\it pull-back log structure}.

(4) In (1) we can pull back the log structure on $X$ to $Y$ using
(3). Thus in particular, if $\X\rightarrow D$ is a toric
degeneration, the inclusion $\X_0\subseteq\X$ gives a log
structure on $\X$ and an induced log structure on $\X_0$. Similarly
the inclusion $0\in D$ gives a log structure on $D$ and
an induced one on $0$. Here $\M_0=\CC^{\times}\oplus\NN$,
where $\NN$ is the (additive) monoid of natural (non-negative) numbers,
and
$$\alpha_0(h,n)=\begin{cases}h& n=0\\ 0&n\not=0.\end{cases}$$
$0^{\dagger}$ is usually called the \emph{standard log point}.

We then have log morphisms $\X^{\dagger}\rightarrow D^{\dagger}$ and
$\X_0^{\dagger}\rightarrow 0^{\dagger}$.

(5) If $\sigma\subseteq M_{\RR}=\RR^n$ is a
strictly convex rational polyhedral cone, $\dual{\sigma}\subseteq
N_{\RR}$ the dual cone, let $P=\dual{\sigma}\cap N$: this is a monoid
under addition.
The affine toric variety defined by $\sigma$ can be written as
$X=\Spec \CC[P]$. 
We then have a pre-log structure induced by the homomorphism of
monoids
$$P\rightarrow \CC[P]$$
given by $p\mapsto z^p$. There is then an associated log
structure on $X$. This is in fact the same as the log structure
induced by $\partial X\subseteq X$, where $\partial X$
is the toric boundary of $X$, i.e., the union of toric divisors of $X$.

If $p\in P$, then the monomial $z^p$ defines a map
$f:X\rightarrow \Spec \CC[\NN]=\AA^1$ which is a log morphism
with the log structure on $\Spec \CC[\NN]$ induced similarly by
$\NN\rightarrow\CC[\NN]$.
The fibre $X_0=\Spec \CC[P]/(z^p)$ is a subscheme of $X$,
there is an induced log structure on $X_0$, and a map $X_0^{\dagger}
\rightarrow 0^{\dagger}$ as in (4). The log morphism $f$ is an example of a 
\emph{log smooth} morphism, see Definition \ref{logsmooth}.

Condition (4) of Definition \ref{toricdegen} in fact implies
that locally, away from $Z$, $\X^{\dagger}$ and $\X_0^{\dagger}$ are
of the above form. So we should view $\X^{\dagger}\rightarrow D^{\dagger}$
as log smooth away from $Z$, and from the log point of view, $\X_0^{\dagger}$
can be treated much like a non-singular scheme away from $Z$. 

(6) Given a monoid $P$ as in (5) and a morphism
$X\rightarrow \Spec\CC[P]$, we can pull back the log structure defined
above on $\Spec\CC[P]$ to $X$. If $X^{\dagger}$ is a log scheme which
\'etale locally can be described in this way, we say $X^{\dagger}$ is a 
\emph{fine saturated log scheme}. The adjective ``fine'' tells us it is locally
described via maps to schemes of the form $\Spec\CC[P]$ where $P$ is
a finitely generated 
\emph{integral} monoid, i.e., the canonical homomorphism $P\rightarrow
P^{\gp}$ is an injection.
The adjective ``saturated'' tells us the monoid $P$ is saturated.
This means that $P$ is integral and whenever
$p\in P^{\gp}$ satisfies $mp\in P$ for some $m>0$, $p\in P$.
Such monoids arise, e.g., as
the intersection of a rational polyhedral cone with a lattice.

Most of the literature on log geometry tends to apply only to 
fine log structures.
In the key example of $\shX_0^{\dagger}\rightarrow 0^{\dagger}$,
the log structure is fine saturated away from the set $Z$. However, it is not
in general fine
along $Z$, and this tends to cause many technical problems as new techniques
have to be developed to deal properly with the log structure along $Z$.
\qed
\end{examples}

The notion of log smoothness generalizes the morphisms of Examples
\ref{logexamples}, (5): 

\begin{definition}
\label{logsmooth}
A morphism $f:X^{\dagger}\rightarrow Y^{\dagger}$ of
fine log schemes is \emph{log smooth}
if \'etale locally on $X$ and $Y$
it fits into a commutative diagram
\[
\xymatrix@C=30pt
{
X\ar[r]\ar[d]&\Spec \ZZ[P]\ar[d]\\
Y\ar[r]&\Spec\ZZ[Q]
}
\]
with the following properties:
\begin{enumerate}
\item The canonical log structure on $\Spec \ZZ[P]$ and
$\Spec\ZZ[Q]$ of Examples \ref{logexamples}, (5), pull-back to the
log structures on $X$ and $Y$ respectively.
\item The induced morphism
\[
X\rightarrow Y\times_{\Spec\ZZ[Q]}\Spec\ZZ[P]
\]
is a smooth morphism of schemes.
\item The right-hand vertical arrow is induced by a monoid homomorphism
$Q\rightarrow P$ with $\ker(Q^{\gp}\rightarrow P^{\gp})$ and the
torsion part of $\coker(Q^{\gp}\rightarrow P^{\gp})$ finite groups
of orders invertible on $X$. Here $P^{\gp}$ denotes the Grothendieck
group of $P$.
\end{enumerate}
\end{definition}

Log smooth morphisms include, in the simplest case, normal crossings
morphisms.

\medskip

On a log scheme $X^{\dagger}$ there is always an exact sequence
\[
1\mapright{} \O_X^{\times}\mapright{\alpha^{-1}}\M_X\mapright{}
\overline{\M}_X\mapright{}0,
\]
where we write the quotient sheaf of monoids $\overline{\M}_X$
additively. We call $\overline{\M}_X$ the \emph{ghost sheaf}
of the log structure. I like to view $\overline{\M}_X$ as specifying
the combinatorial information associated to the log structure. For 
example, if $X^{\dagger}$ is induced by the Cartier divisor $Y\subseteq
X$ with $X$ normal, 
then the stalk $\overline{\M}_{X,x}$ at $x\in X$ is the monoid 
of effective Cartier divisors on a neighbourhood of $x$ supported
on $Y$.

It is useful for understanding pull-backs of log structures
to note that if $f:Y\rightarrow X$ is a morphism
with $X$ carrying a log structure, and $Y$ is given the pull-back log
structure, then $\overline{\M}_Y=f^{-1}\overline{\M}_X$. In the case
that $\M_X$ is induced by an inclusion
of $Y\subseteq X$, $\overline{\M}_X$ is supported on $Y$, so we can
equate $\overline{\M}_X$ and $\overline{\M}_Y$, the ghost sheaves
for the divisorial log structure on $X$ and its restriction to $Y$.

\begin{xca}
\label{ghostexercise}
Show that in Example \ref{logexamples}, (5), $\overline{\M}_{X,x}=
P$ if $\dim\sigma=\dim M_{\RR}$ and $x$ is the unique zero-dimensional torus 
orbit
of $X$. More generally, 
\[
\overline{\M}_{X,x}={\dual{\tau}\cap N\over \tau^{\perp}\cap N}
=\Hom_{monoid}(\tau\cap M,\NN),
\]
when $x\in X$ is in the torus orbit 
corresponding to a face $\tau$ of $\sigma$. In particular, $\tau$
can be recovered as $\Hom_{monoid}(\overline{\M}_{X,x},\RR_{\ge 0})$,
where $\RR_{\ge 0}$ is the additive monoid of non-negative real
numbers.
\qed
\end{xca}

In the sections which follow, the key logarithmic spaces we consider
will be those arising from toric degenerations $\shX\rightarrow D$.
As above, the central fibre $\shX_0\subseteq\shX$ induces a divisorial
log structure on $\shX$, and restricting gives a log scheme 
$\shX_0^\dagger$ along with a morphism $\shX_0^{\dagger}\rightarrow
0^{\dagger}$ which is log smooth off of the bad set $Z\subseteq \shX_0$.

We can now elaborate on the philosophy we wish to take with the following
diagram:
\begin{center}
\input{philo1.pstex_t}
\end{center}
There are two sides to mirror symmetry. 
The $A$-model side involves counting curves: we wish to count curves
in the general fibre of a toric degeneration $\shX\rightarrow D$.
There are good reasons to believe that this count can in fact be performed
on $\shX_0^{\dagger}$, using a theory of logarithmic Gromov-Witten
invariants: see \S\ref{Amodelsection}. 
The hope is that $\shX_0$ is a sufficiently
combinatorial object so that such a count can be carried out in a combinatorial
manner.

The $B$-side involves deformations of complex structure. The idea is
that to understand deformations of complex structure, we should start
with the central fibre $\shX_0^{\dagger}$ and try to construct
smoothings, i.e., construct a toric degeneration with this central fibre.
The log structure is necessary to find a unique smoothing. 
If this smoothing can be described sufficiently explicitly, 
then again one should be able
to extract the necessary periods for the $B$-model calculations purely
in terms of combinatorics.

So log geometry will play an important role on both sides of mirror
symmetry, but as the above suggests, there should be some combinatorial
objects underlying both calculations.

In fact, log geometry is closely related to tropical geometry. We will
explore in the following sections how tropical geometry controls both
the $A$- and $B$-model sides of the above picture, completing the above 
diagram:
\bigskip
\begin{center}
\input{philo2.pstex_t}
\end{center}

\section{The $A$-model and tropical geometry}
\label{Amodelsection}

The first link between log geometry and tropical geometry comes from an
elementary combinatorial construction. Given a log scheme $X^{\dagger}$, 
we can construct the \emph{tropicalization} of $X^{\dagger}$, as follows.
For each geometric point $\bar\eta$ of $X$, we have a monoid 
$\overline\M_{X,\bar\eta}$, and hence a cone $C_{\bar \eta}:=
\Hom(\overline\M_{X,\bar\eta},\RR_{\ge 0})$ (where here $\Hom$ denotes
monoid homomorphisms and $\RR_{\ge 0}$ is given the additive monoid
structure). Further, if $\bar\eta$ is in the closure of $\bar\eta'$, there
is a generization map $\overline\M_{X,\bar\eta}\rightarrow
\overline\M_{X,\bar\eta'}$.\footnote{Since we need to work in the
\'etale topology, there can actually be a number of generization maps.
For example, if $X$ is a nodal cubic, then there are two generization maps
from $\bar\eta$ the node to $\bar\eta'$ the generic point.} 
Dualizing, this gives maps $C_{\bar\eta'}\rightarrow C_{\bar\eta}$.
If the log structure on $X$ is fine,
then these maps are inclusions of faces of strictly
convex rational polyhedral cones. We can then form a cell complex by
making identifications given by these inclusions of faces, obtaining
a polyhedral cone complex $\Trop(X^{\dagger})$. Actually, in general this 
may not really make sense as a cell complex because the generization maps
may induce many strange self-identifications on faces, but in the situations
we want to describe here, this will not cause a problem.

This construction is functorial, so if $f:X^{\dagger}\rightarrow Y^{\dagger}$
is a morphism of log schemes, then we obtain $\Trop(f):
\Trop(X^{\dagger})\rightarrow \Trop(Y^{\dagger})$.

For example, consider the case of a toric degeneration $\shX\rightarrow D$.
As we saw in the previous section, this gives a morphism of log schemes
$\shX_0^{\dagger}\rightarrow 0^{\dagger}$. The bad set $Z\subseteq
\shX_0$ is precisely the locus where the log structure on $\shX_0$ is
not fine. Thus we can apply the above 
tropicalization construction to $\shX_0^{\dagger}\setminus Z\rightarrow
0^{\dagger}$. Now $\Trop(0^{\dagger})=\RR_{\ge 0}$ is a ray. On the other hand,
if $x\in\shX_0$ is a zero-dimensional stratum, locally a neighbourhood
of $x$ looks like the fibre over $0$ of $f_x:Y_x\rightarrow\CC$ where 
$Y_x$ is a toric variety defined by $C(\sigma_x)\subseteq
M_{\RR}\oplus\RR$ for a lattice polytope
$\sigma_x\subseteq M_{\RR}$, and the morphism $Y_x\rightarrow\CC$ is given
by the projection $M_{\RR}\oplus\RR\rightarrow\RR$. Then 
$\overline\M_{\shX_0,x}=C(\sigma_x)^{\vee}\cap (N\oplus \ZZ)$, as
follows from Exercise \ref{ghostexercise}, and $C_x=C(\sigma_x)$.
In particular, the induced map $\Trop(f):
C_x\rightarrow \Trop(0^{\dagger})$ has fibre $\Trop(f)^{-1}(1)=\sigma_x$.
From this, one checks easily that 
\[
\Trop(f):\Trop(\shX_0^{\dagger}\setminus
Z)\rightarrow \Trop(0^{\dagger})=\RR_{\ge 0}
\]
has fibre
\[ 
\Trop(f)^{-1}(1)=B,
\]
with $B$ coming with the polyhedral decomposition $\P$. So the dual
intersection complex comes from a very general construction. In particular,
note that $B$ only depends on $\shX_0^{\dagger}$, not on $\shX$ (although
this is obvious without knowing this general construction).

Now let us turn to the $A$-model, which for the purposes of this discussion
means counting curves on Calabi-Yau manifolds. Suppose we have a toric
degeneration $\shX\rightarrow D$. We would like to count curves on the general
fibre. Can we do so by counting curves on $\shX_0$ instead, where the
problem might have a more combinatorial nature?

This question has a long history. The first work on this kind of question
was due to Li and Ruan \cite{LR} and Ionel and Parker \cite{IP1},\cite{IP2}. 
Essentially they
considered a situation where one has a degeneration $\shX\rightarrow D$
where the special fibre $\shX_0=X_1\cup X_2$ is a normal crossings
union of two smooth irreducible components. They showed that there
was a theory of Gromov-Witten invariants of $\shX_0$, and that it
gave the same answer as Gromov-Witten theory on a general fibre. Further,
they gave gluing formulas, which stated that the Gromov-Witten invariants
of $\shX_0$ could be computed using the Gromov-Witten invariants of
the two pairs $(X_i,X_1\cap X_2)$, $i=1,2$. Here the Gromov-Witten theory
associated to a pair $(X,D)$ where $D\subseteq X$ is a smooth divisor
is the theory of \emph{relative} Gromov-Witten invariants, where one
considers curves in $X$ with some imposed orders of tangency at points on
the curve with $D$. This gluing formula has proven to be a very powerful
tool in Gromov-Witten theory.

In 2001, Bernd Siebert \cite{STalk} proposed using log geometry to generalize
these results. Meanwhile, Jun Li was working on an algebro-geometric
approach to the Li-Ruan and Ionel-Parker theories (which were carried
out using symplectic techniques). He gave a satisfactory algebro-geometric
definition of relative Gromov-Witten invariants and reproved the gluing
formula, using a few techniques from log geometry. However, the theory
possesses a technical difficulty. In Gromov-Witten theory, it is standard
that one allows the domain curves to develop bubbles. But in relative
Gromov-Witten theory, it is also necessary to allow the target space
$X$ to develop bubbles. This occurs when an irreducible component of
the domain curve falls into the divisor $D$, so that the order of tangency
with $D$ becomes meaningless. So the actual target space for a relative stable
map might be $X$ with a chain of $\PP^1$-bundles over $D$ glued to $D\subseteq 
X$. This often
makes the analysis more difficult, and was a major stumbling block for
extending these techniques to more complicated degenerations.

Several solutions to this problem were completed in 2011. Brett Parker
in \cite{Park1}, \cite{Park2} 
provided a completely new category, the category of
\emph{exploded manifolds}, in which to study Gromov-Witten theory. 
These manifolds carry
information similar to log spaces, but is a somewhat more flexible
and ``softer'' category in which to work. In \cite{Park2} 
he provides a definition
of Gromov-witten invariants in this setting and gives a gluing formula. 
Also,
Siebert and I \cite{JAMS} 
completed a theory of logarithmic Gromov-Witten invariants,
as did Abramovich and Chen \cite{Chen},\cite{AC}, 
working with Siebert's original
suggestion. I will summarize the basic ideas here.

\begin{definition} 
A \emph{log curve} over a fine saturated 
log scheme $W^{\dagger}$ is a fine saturated log scheme 
$C^{\dagger}$ with a
morphism $C^{\dagger} \rightarrow W^{\dagger} $ 
which is flat of relative dimension one, log smooth, and with all
geometric fibres reduced. 
\end{definition}

Here log smoothness implies that the geometric fibres of $C\rightarrow W$
are nodal curves, which is pleasant as this is precisely the sort of
curve which is allowed as the domain of a stable map. The log structure
can also be viewed as incorporating marked points. For example,
given a smooth curve $C$ over $W=\Spec \CC$, one can take a finite
number of points $x_1,\ldots,x_k\in C$ and give $C$ the divisorial
log structure associated to the subset $\{x_1,\ldots,x_k\}\subseteq C$.
Then $C^{\dagger}$ is log smooth over $W$ with the trivial log structure
$\M_W=\O_W^{\times}$.

\begin{definition}
Let $X^{\dagger}\rightarrow S^{\dagger}$ be a morphism of
fine saturated log schemes.
A \emph{log curve in $X^{\dagger}$ with base $W^{\dagger}$} is a log curve 
$C^{\dagger}/W^{\dagger}$
together with a morphism $f:C^{\dagger}\rightarrow X^{\dagger}$ 
fitting into a commutative diagram of log schemes
\[
\xymatrix@C=30pt
{ C^{\dagger}\ar[r]^f\ar[d] &X^{\dagger}\ar[d]\\
W^{\dagger}\ar[r]&S^{\dagger}}
\]
A log curve in $X^{\dagger}$ is 
a \emph{stable log map} if for every geometric point
$\bar w\rightarrow W$, the restriction of $f$ to the underlying
marked curve $C_{\bar w}\rightarrow\bar w$ is an ordinary stable map.
We write the data as $(C^{\dagger}/W^{\dagger},f)$.

This definition can be further decorated in the usual way by labelling
marked points.
\end{definition}

The main work of \cite{JAMS} is to construct a well-behaved moduli space
of stable log maps. There is a technical issue which arises whenever
one tries to construct a moduli space of log objects; this 
was explored by Martin Olsson in his thesis \cite{Ols}. The problem
is as follows. Suppose we are given a stable log map with domain
$\pi:(C,\M_C)\rightarrow (W,\M_W)$. Then $\pi':(C,\M_C\oplus \NN^r)\rightarrow
(W,\M_W\oplus\NN^r)$ also gives the domain of a stable log map. Here
the structure map $\alpha_C$ (or $\alpha_W$) takes the value $0$ on
the non-zero elements of the 
constant sheaf $\NN^r$, and the map $\pi'$ acting on monoids
just takes $\NN^r$ isomorphically to $\NN^r$. The new map $f^{\#}$
is the composition of the old $f^{\#}:f^{-1}\M_X\rightarrow \M_C$ and
the inclusion $\M_C\rightarrow \M_C\oplus\NN^r$.
As a result, a single stable log map gives rise to
a countable number of other maps, so the stack of
stable log maps has no chance of being finite type, and hence cannot
be proper. 

The solution is to identify log structures on $W$ which are universal
in a suitable sense. In the above example, all the log curves
in question arise as a cartesian diagram of log schemes:
\[
\xymatrix@C=30pt
{
(C,\M_C\oplus\NN^r)\ar[r]\ar[d]&(C,\M_C)\ar[d]\\
(W,\M_W\oplus\NN^r)\ar[r]&(W,\M_W)
}
\]
Thus all these extraneous log curves can be viewed as obtained by pull-back
from the initial choice of log curve via a logarithmic base-change.

To solve this problem, we introduce a property of stable log
maps called \emph{basic}. I do not wish to give the definition here, as
it is very involved, but the important properties of basic stable log
maps are universality and boundedness, as expressed in the following
two theorems, a summation of the main results of \cite{JAMS}:

\begin{theorem}
Given a stable log map $(C^{\dagger}/W^{\dagger},f)$, there is a basic
stable log map $(C_b^{\dagger}/W_b^{\dagger},f_b)$ fitting into a commutative
diagram
\[
\xymatrix@C=30pt
{
C^{\dagger}\ar[r]\ar[d]&C_b^{\dagger}\ar[r]\ar[d]&
X^{\dagger}\ar[d]\\
W^{\dagger}\ar[r]&W_b^{\dagger}\ar[r]&S^{\dagger}
}
\]
where the left-hand square is cartesian in the category of fine
saturated log schemes and
the maps $W\rightarrow W_b$ and $C\rightarrow C_b$ of underlying schemes
are isomorphisms. Furthermore, $(C_b^{\dagger}/W_b^{\dagger},f_b)$ and
the maps in the above diagram are determined by
$(C^{\dagger}/W^{\dagger},f)$  uniquely up to unique isomorphism. 
\end{theorem}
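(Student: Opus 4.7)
The plan is to construct, functorially in geometric points $\bar w \to W$, a canonical sharp fine saturated monoid $Q_{\bar w}$ that encodes the minimal combinatorial data required to realize the stable log map on the fibre, then sheafify to obtain $W_b^\dagger$ and transport the log structure to build $C_b^\dagger$ and $f_b$.

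First I would analyze the local combinatorial content of a stable log map at a geometric point $\bar w$. The log curve $C_{\bar w}^\dagger$ contributes, for each node $q$ of $C_{\bar w}$, a smoothing parameter $\rho_q \in \overline{\shM}_{W,\bar w}$ determined by the \'etale-local model $xy = t_q$. For each irreducible component $X_v$ of $C_{\bar w}$, the map $f$ restricted to the generic point $\eta_v$ induces $\bar\chi_v : P_v \to \overline{\shM}_{W,\bar w}$ with $P_v := \overline{\shM}_{X, f(\eta_v)}$. At a node $q$ joining components $X_{v_1}, X_{v_2}$, the image $f(\bar q)$ lies in the closure of both $f(\eta_{v_i})$, so $P_q := \overline{\shM}_{X, f(\bar q)}$ maps naturally to each $P_{v_i}$, and a direct computation in the local model shows that for every $s \in P_q$ one has $\bar\chi_{v_1}(s) - \bar\chi_{v_2}(s) = u_q(s)\,\rho_q$ in $\overline{\shM}_{W,\bar w}^{\gp}$, where $u_q : P_q^{\gp} \to \ZZ$ is the contact order at the node, fixed by the underlying stable map. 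These are the universal relations.

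I would then define $Q_{\bar w}^\vee$ to be the rational polyhedral cone of tuples $((\rho_q)_q, (\bar\chi_v)_v)$ with $\rho_q \in \RR_{\ge 0}$ and $\bar\chi_v \in \Hom(P_v, \RR_{\ge 0})$ satisfying these relations, and take $Q_{\bar w}$ to be its dual, which is finitely generated, sharp, and saturated by standard convex-geometric arguments. The original stable log map induces a tautological homomorphism $Q_{\bar w} \to \overline{\shM}_{W,\bar w}$. Under a generization $\bar w \rightsquigarrow \bar w'$, nodes that smooth out correspond to localizations along the faces where the relevant $\rho_q$ become invertible, so the monoids $Q_{\bar w}$ assemble into a constructible \'etale sheaf $\overline{\shM}_{W_b}$ with a canonical morphism to $\overline{\shM}_W$. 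The log structure $\shM_{W_b} := \overline{\shM}_{W_b} \times_{\overline{\shM}_W} \shM_W$ then defines $W_b^\dagger$ on the underlying scheme $W$, and the parallel construction on the curve side produces $C_b^\dagger$; the morphism $f_b : C_b^\dagger \to X^\dagger$ is read off the universal data of $Q_{\bar w}$. The comparison $W^\dagger \to W_b^\dagger$ comes from the tautological $Q_{\bar w} \to \overline{\shM}_{W,\bar w}$ together with the identity on $\O_W^\times$.

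The main obstacle will be establishing the universal property of $(C_b^\dagger/W_b^\dagger, f_b)$ in the fine saturated category, which is precisely what makes the left square cartesian there. Pointwise this reduces to the statement that $Q_{\bar w}$ represents the functor sending a sharp fine saturated monoid $N$ to the set of compatible tuples $(\rho_q, \bar\chi_v)$ of the form above, which holds essentially by construction since $Q_{\bar w}^\vee$ was defined as precisely the cone of such tuples over $\RR_{\ge 0}$. The delicate point is verifying that the fine saturated fibre product in the left square behaves correctly: saturation of $Q_{\bar w}$ is essential, for without it the fine-category fibre product would require further saturation that could introduce spurious components, and one must confirm that the contact-order data $u_q$ cut out exactly the correct saturated cone rather than a proper subcone of its saturation. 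Uniqueness of $(C_b^\dagger/W_b^\dagger, f_b)$ up to unique isomorphism then follows formally from this universal property.
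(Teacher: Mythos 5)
Your construction is, in outline, exactly the Gross--Siebert argument in \cite{JAMS}: define at each geometric point the \emph{basic monoid} $Q_{\bar w}$ as the (sharp, fs) dual of the cone of tuples $((\rho_q),(\bar\chi_v))$, i.e.\ the cone parameterizing tropical curves of the given combinatorial type; observe that the original log map furnishes a tautological homomorphism $Q_{\bar w}\to\overline{\shM}_{W,\bar w}$; sheafify; pull the torsor $\shM_W\to\overline{\shM}_W$ back along $\overline{\shM}_{W_b}\to\overline{\shM}_W$ to build $\shM_{W_b}$; and read the universal property off the representability of the tuple-functor. That is the right skeleton, and you have correctly identified that the universality is ``by construction'' once the basic monoid is chosen as the dual of the tropical moduli cone.

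Two points deserve correction or more attention. First, the contact order $u_q:P_q^{\gp}\to\ZZ$ at a node is \emph{not} determined by the underlying stable map alone; it is part of the log-map data (the ``combinatorial type'' of \cite{JAMS}). This is precisely the subtlety that forces one to work with stable \emph{log} maps rather than stable maps plus decorations: when a neighbourhood of $q$ maps into a positive-dimensional stratum of $X$, the contact order is a genuine extra invariant. Your argument is unaffected since the input is the log map, but the phrasing ``fixed by the underlying stable map'' would suggest a uniqueness stronger than what is true. Second, you locate the main delicacy in the fs fibre product of the left square, but the deeper technical burden in \cite{JAMS} lies earlier, in the sheafification: under generization $\bar w\rightsquigarrow\bar w'$, nodes smooth, components merge, the sets indexing the $\rho_q$ and $\bar\chi_v$ change, and one must check that the resulting generization maps on the $Q_{\bar w}$ are well-defined, compatible, and actually produce a constructible sheaf of fs monoids whose stalks match. (Relatedly, sharpness of $Q_{\bar w}$ is not automatic from the convex-geometric construction if the cone $Q_{\bar w}^{\vee}$ fails to span; one must quotient appropriately, and this quotienting interacts with the generization maps.) Once that sheaf is in place, the cartesianness of the left square reduces to a local computation of pushouts of ghost sheaves at nodes, marked points, and generic points, and the saturation issue you flag is real but comparatively mild; the proof is that $\overline{\shM}_{C}$ at a node is $\overline{\shM}_W\oplus_{\NN}\NN^2$ with $1\mapsto\rho_q$, and this is compatible with base change along $\overline{\shM}_{W_b}\to\overline{\shM}_W$ because the node generator $\rho_q^b$ in $Q_{\bar w}$ maps to $\rho_q$.
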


\begin{theorem}
Let $\cM(X^{\dagger}/S^{\dagger})$ denote the stack of basic stable log maps in
$X^{\dagger}$ over $S^{\dagger}$. Then:
\begin{enumerate}
\item $\cM(X^{\dagger}/S^{\dagger})$ is a Deligne-Mumford stack.
\item Let $\beta$ denote a choice of genus $g$, number of marked points $k$,
homology class in $H_2(X,\ZZ)$, along with a collection of tangency data
for the marked points (this notion can be made precise). Let
$\cM(X^{\dagger}/S^{\dagger},\beta)$ denote the substack of 
$\cM(X^{\dagger}/S^{\dagger})$ of basic stable log maps
of curves of genus $g$ and $k$ marked points, representing the given homology
class, and satisfying the given tangency conditions. Then modulo some
technical hypotheses on $X^{\dagger}$, $\cM(X^{\dagger}/S^{\dagger},\beta)$ is 
proper over $S$ if $X$ is proper over $S$.
\item Assuming further that $X^{\dagger}\rightarrow S^{\dagger}$ is
log smooth, $\cM(X^{\dagger}/S^{\dagger},\beta)$ 
carries a virtual fundamental class,
allowing for the definition of logarithmic Gromov-Witten invariants.
\end{enumerate}
\end{theorem}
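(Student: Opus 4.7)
The plan is to prove each part in turn, exploiting the universality of basic stable log maps established in the previous theorem.

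For (1), the strategy is to exhibit $\cM(X^{\dagger}/S^{\dagger})$ as a stack over the Kontsevich stack $\overline{\cM}_{g,k}(X/S)$ of ordinary stable maps in $X$, which is itself Deligne-Mumford. The forgetful morphism $\cM(X^{\dagger}/S^{\dagger})\to \overline{\cM}_{g,k}(X/S)$ discards the log structures on $C$, $W$ and the log enhancement of $f$. The universal property of basic log maps ensures that the fibre of this morphism over a given ordinary stable map $(C/W,f)$ parametrizes basic log enhancements; by the uniqueness in that theorem, such an enhancement is determined up to unique isomorphism by discrete combinatorial data at each geometric point of $W$. Concretely, one factors the forgetful morphism through Olsson's stack $\operatorname{Log}_{S^{\dagger}}$ parameterizing fine saturated log structures with a map to $S^{\dagger}$, and checks that the combined morphism is representable and of finite type. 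Combining with the Deligne-Mumford property of $\overline{\cM}_{g,k}(X/S)$ yields the claim.

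For (2), apply the valuative criterion of properness. Given a basic stable log map over $\Spec K$ for $K$ the fraction field of a DVR $R$, we must extend it, uniquely up to finite base change of $R$, to $\Spec R$. The underlying stable map extends by ordinary stable reduction. One then has to equip the central fibre and the total family with a fine saturated log structure enhancing the given generic one; here the explicit combinatorial description of basic log structures at a geometric point in terms of the tropicalization of $X^{\dagger}$ produces a canonical extension, with saturation used to absorb any necessary finite base change. Separatedness is checked in the same way, using uniqueness of the basic log structure. Boundedness of $\cM(X^{\dagger}/S^{\dagger},\beta)$ reduces to boundedness of the underlying Kontsevich moduli plus finiteness of the combinatorial types of basic log structures compatible with $\beta$; the latter follows from the tangency and contact data bounding contact orders at marked points and nodes, and hence the dual graph of the tropical type.

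For (3), we construct a perfect obstruction theory in the sense of Behrend-Fantechi. The logarithmic deformation theory of K.~Kato and F.~Kato gives that the relative obstruction theory for a log morphism $f:C^{\dagger}\to X^{\dagger}$ over $S^{\dagger}$, with the log structure on the base held fixed, is controlled by $R\pi_*f^*\Theta_{X^{\dagger}/S^{\dagger}}$, where $\Theta_{X^{\dagger}/S^{\dagger}}$ denotes the logarithmic tangent sheaf and $\pi:C\to W$ is the universal curve. Log smoothness of $X^{\dagger}\to S^{\dagger}$ guarantees that $\Theta_{X^{\dagger}/S^{\dagger}}$ is locally free, so this complex is perfect of amplitude $[0,1]$. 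The main obstacle, and the principal technical work, will be to set this obstruction theory up \emph{relative to} the stack $\operatorname{Log}_{S^{\dagger}}$ rather than relative to $S^{\dagger}$ itself, since $\cM(X^{\dagger}/S^{\dagger},\beta)$ remembers the log structure on $W$ and not merely the underlying scheme; this is the analogue, in the basic setting, of Olsson's treatment of log deformations via the cotangent complex of $\operatorname{Log}_{S^{\dagger}}$. Once this relative obstruction theory is in place and shown to be perfect, the Behrend-Fantechi machinery produces the virtual fundamental class.
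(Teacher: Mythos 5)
The paper itself does not prove this theorem---it is stated as ``a summation of the main results of \cite{JAMS}'' (and \cite{AC},\cite{Chen}), so there is no internal proof to compare against. That said, your sketch correctly captures the strategy of those references: passing to the Kontsevich stack together with Olsson's $\operatorname{Log}$ stacks for algebraicity and the Deligne--Mumford property in (1); the valuative criterion, combining ordinary stable reduction with a combinatorial/tropical construction of the basic log enhancement, plus finiteness of combinatorial types for boundedness in (2); and a perfect obstruction theory built from $R\pi_*f^*\Theta_{X^\dagger/S^\dagger}$ in (3).

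One refinement is warranted in (3): the relative obstruction theory in \cite{JAMS} (and in Abramovich--Chen) is taken relative to the Artin stack $\foM$ of prestable \emph{log} curves (marked, genus $g$) over $S^\dagger$, rather than literally relative to $\operatorname{Log}_{S^\dagger}$; this is what encodes simultaneously the deformations of the nodal marked curve and of the log structure on $W$, and $\foM$ is itself analysed via Olsson's machinery. The complex $(R\pi_*f^*\Theta_{X^\dagger/S^\dagger})^\vee$ then gives a perfect relative obstruction theory over $\foM$, and $\foM$ is smooth over $\operatorname{Log}_{S^\dagger}$, so the Behrend--Fantechi construction applies. Your phrase ``relative to $\operatorname{Log}_{S^\dagger}$'' gestures at the right idea but skips the intermediate stack of log curves, which is where the actual obstruction theory lives. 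With that correction, the proposal aligns with the proof in the cited references.
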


Similar results were also obtained by Abramovich and Chen in 
\cite{AC},\cite{Chen}.

This is a promising start to the problem of understanding the $A$-model
by working entirely on the central fibre of a toric degeneration. There are,
however, still two major gaps in the theory which need to be filled. 

First,
one needs an analogue of the gluing formula. This should allow us to
break down a calculation of curves on the central fibre of a degeneration
into simpler pieces. This is expected to be quite subtle, however, and
is still work in progress. I will say a bit more shortly about what one
expects such a formula to look like.

Second, as observed earlier, the central fibre of a toric degeneration
$\shX_0^{\dagger}\rightarrow 0^{\dagger}$ is only fine saturated off of 
the set $Z$. As a result
none of the above theorems about stable log maps apply. It is quite
likely that even the definition of stable log map is not the correct
one in this case. So the theory still needs to be extended. This is also
work in progress of Michael Kasa.

Let us return to the tropicalization functor. Suppose we have a degeneration
$q:\shX\rightarrow D$, which we assume to be log smooth (say a normal crossings
degeneration), so that we don't
have to worry about the singular set $Z$ where the log structure on
$\shX$ is not fine. As usual, this gives $\shX_0^{\dagger}
\rightarrow 0^{\dagger}$. Suppose we have a basic stable log map over
a point, i.e., a diagram
\[
\xymatrix@C=30pt
{C^{\dagger}\ar[r]^f\ar[d]_\pi& \shX_0^{\dagger}\ar[d]^q\\
W^{\dagger}=(\Spec\CC, Q\oplus\CC^{\times})\ar[r]_(0.8)g&0^{\dagger}
}
\]
Here $Q$ is a monoid given by $Q=\sigma_Q\cap \ZZ^n$ for a strictly convex
rational polyhedral cone $\sigma_Q$, and the log structure on $W$ is given
by $\alpha:Q\oplus\CC^\times\rightarrow\CC$ defined by
\[
\alpha(p,s)=\begin{cases}
s & p=0\\
0 & p\not=0
\end{cases}
\]
Here, the monoid $Q$ is determined by the fact the curve is basic. We then
tropicalize this, so get a diagram
\[
\xymatrix@C=30pt
{\Trop(C^{\dagger})\ar[r]^{\Trop(f)}\ar[d]_{\Trop(\pi)}& 
\Trop(\shX_0^{\dagger})\ar[d]^{\Trop(q)}\\
\Trop(W^{\dagger})=\sigma_Q^{\vee}\ar[r]_{\Trop(g)}&
\Trop(0^{\dagger})=\RR_{\ge 0}
}
\]
The fibres of $\Trop(\pi)$ are in general one-dimensional 
graphs, while $\Trop(q)^{-1}(1)$ is the dual intersection complex $B$ of
$\shX_0^{\dagger}$. (In general, this is only a polyhedral complex and does
not carry an affine structure in codimension one, unlike the case of a toric
degeneration.) Thus $\Trop(g)^{-1}(1)\subseteq \sigma_Q^{\vee}$ can
be viewed as a space parameterizing maps from graphs (fibres of $\Trop(\pi)$)
into $B$. These will be tropical curves. In fact, where $B$ does carry
an affine structure, these curves satisfy the tropical balancing condition.

The fundamental property that the monoid $Q$ associated with the basic
log structure must satisfy is that $\Trop(g)^{-1}(1)$ must parameterize
all tropical curves in $B$ of the same ``combinatorial type''. This
makes precise the correspondence between tropical curves and log curves.

We can also describe the expected shape of a gluing formula, in keeping
with the formula developed by Brett Parker in his setting \cite{Park2}. 
One considers
tropical curves in $B$ as above. These in general move in families, but
there will be, for any given set of data $\beta$, a finite number of tropical
curves representing $\beta$
which cannot be deformed without changing the domain graph. We call
such tropical curves \emph{rigid}. The actual moduli space 
$\cM(\shX_0^{\dagger}/0^{\dagger},\beta)$ can then be viewed to have a
``decomposition into virtual irreducible components'' indexed by these rigid
curves. Furthermore, the ``virtual irreducible component'' associated to any 
rigid curve can be further related to moduli spaces of curves associated
to each vertex of the tropical curve. This should ultimately allow an
expression for the Gromov-Witten invariants of $\shX_0^{\dagger}/0^{\dagger}$,
and hence the Gromov-Witten invariants of a smoothing of $\shX_0^{\dagger}$,
in terms of much simpler invariants. This is an ongoing joint project
with Abramovich, Chen and Siebert.

\section{The $B$-model and tropical geometry}
\label{Bmodelsect}

Let us turn to the $B$-model, and understand how tropical geometry
may be visible in the variation of complex structures which is necessary for
$B$-model computations.

This problem is closely related to the reconstruction problem, as 
stated in Question \ref{reconstruct2}. If given $(B,\P,\varphi)$,
one can find an explicit description of a toric degeneration 
$\shX\rightarrow D$, with dual intersection complex $(B,\P,\varphi)$,
then one could use this explicit description
to calculate periods and extract $B$-model predictions for the mirror.

Before describing the solution to this problem, let me give a bit of history
of the reconstruction problem. The version as stated in Question
\ref{reconstruct1} was first studied by Fukaya in \cite{Fukaya}. 
There he considered
directly the question of perturbing the complex structure on $X_{\epsilon}(B_0)$
by looking at the Kodaira-Spencer equation governing deformations of
complex structure. Arguing informally in the case that $\dim B=2$, 
he suggested that the perturbations should be concentrated along trees made
of gradient flow lines, with the lines emanating initially from singular
points of $B$. This gave the first hint that a nice solution to the 
reconstruction problem might actually see something related to curves.
However, Fukaya's work contained no definite theorems, and the analysis
looked likely to be very difficult.

In 2004, Siebert and I were considering how to
solve the reconstruction problem using our program. Given $(B,\P)$, we
had shown in \cite{PartI} how to construct log schemes $X_0(B,\P,s)^{\dagger}$
along with a morphism to $0^{\dagger}$ which had all the properties
one would want for a central fibre of a toric degeneration $\shX_0^{\dagger}
\rightarrow 0^{\dagger}$. Our original hope was that a generalization of
the Bogomolov-Tian-Todorov unobstructedness theorem would allow us to
show such log schemes smoothed. 
In particular, Kawamata and Namikawa \cite{KN} had had success with this point 
of view in the normal crossings case.
While this approach works easily in dimension 2,
we couldn't make it work in higher dimension. Furthermore, this approach fails
to give the explicit description of the smoothing which would be needed
to describe the $B$-model. As a consequence, we turned towards a more
explicit approach, which involved gluing together explicit local models.

While we were working on this approach,
Kontsevich and Soibelman in \cite{KS2} got around the difficult analysis
of Fukaya's approach
by replacing complex manifolds with rigid
analytic manifolds. They were able to show that given a tropical affine
surface $B$ with $24$ singularities of focus-focus type (the simplest
type of singularity which occurs in affine surfaces, to be described
shortly)
one could construct a rigid analytic K3 surface $X^{an}(B)$. This was
done by gluing together standard pieces via automorphisms attached to
lines on $B$. These lines were given as gradient flow lines, giving
a similar, but much more precise, picture to the one given by Fukaya.

Combining our approach of gluing local models with
one of the central ideas of Kontsevich and Soibelman's work \cite{KS2}, we
were then able to complete a construction in all dimensions, giving a
satisfactory solution to the reconstruction problem within algebraic geometry.
This was carried out in \cite{Annals}.

Before surveying this approach, let me make a philosophical remark. Note
that when we were discussing the $A$-model, we observed that tropical
curves on $B$ should correspond to holomorphic curves on $X(B)$. If we want
to see these same tropical curves playing a role on the $B$-model of the
mirror, then we should think of the $B$-model not on a complex manifold
of the form $X(B)$, but rather on $\check X(B)$. This is a slightly
confusing reversal of roles. Normally counting curves is done in the symplectic
category, here expected to mean on the symplectic manifold $\check X(B)$,
while anything having to do with complex structures should be done on
$X(B)$. This reversal can be explained as follows. If we were to study
pseudo-holomorphic curves on $\check X(B)$, we would need to put an almost
complex structure on $\check X(B)$. One way to do this is to choose a metric
on $B$; this induces an almost complex structure on $\check X(B)$ constant
on fibres of the torus fibration, generalizing the construction of
a complex structure from a Hessian metric described in \S\ref{semiflatsection}.
Then in a suitable adiabatic limit where the almost complex structure is 
rescaled,
pseudo-holomorphic curves are expected to tend towards trees of gradient
flow lines. If the chosen metric was in fact Hessian, these gradient
flow lines would in fact be straight lines with respect to the Legendre
dual affine structure, so these trees of gradient flow lines can be 
viewed as a generalization of tropical curves. However, tropical geometry
is linear and much easier to control. We take the attitude that we should
work on the side in which tropical geometry appears. Indeed, this turns
out to be very helpful.

Given this, we can then present a somewhat revised version of the mirror
symmetry program:
\begin{enumerate}
\item We begin with a toric degeneration of Calabi-Yau manifolds
$\shX\rightarrow D$ with an ample polarization.
\item Construct the dual intersection complex 
$(B,\P,\varphi)$ from this data.
\item Construct a new toric degeneration $\check\shX\rightarrow D$
whose \emph{intersection complex} is $(B,\P,\varphi)$. This degeneration
should be controlled by tropical data.
\item Understand genus $0$ holomorphic curves (or whatever other aspect of
the $A$-model one is interested in) on the general fibre of $\shX\rightarrow
D$ in terms of tropical geometry of $B$.
\item Understand the variation of Hodge structures for $\check\shX\rightarrow
D$ in terms of tropical geometry of $B$.
\item Use the fact that the $A$- and $B$-models of $\shX$ and $\check\shX$
respectively are controlled by the same tropical geometry on $B$ to
prove mirror symmetry. 
\end{enumerate}

Here we outline the completion of step (3) as carried out in 
\cite{Annals}.

The first step is as follows. Given $(B,\P,\varphi)$, we wish to construct
the central fibre $\shX_0^{\dagger}$ of the degeneration. This in fact
was carried out in \S 5 of \cite{PartI}, assuming certain genericity
assumptions on the singular locus of $B$. As a scheme, it is fairly
obvious what $\shX_0$ should be. For each maximal cell $\sigma$,
one has an associated projective toric variety $\PP_{\sigma}$ with
Newton polytope $\sigma$. Any face $\tau\subseteq\sigma$ specifies
a toric strata $\PP_{\tau}\subseteq\PP_{\sigma}$, and given
$\tau=\sigma_1\cap\sigma_2$ for $\sigma_1,\sigma_2$ maximal, we can glue
together the toric strata $\PP_{\tau}\subseteq\PP_{\sigma_1},\PP_{\sigma_2}$
in a torus equivariant manner. There is of course a whole family of
possible gluings, parameterized by what we call \emph{closed gluing data}
in \cite{PartI}. Given closed gluing data $s$, we obtain a scheme
$\check X_0(B,\P,s)$.

Now $\check X_0(B,\P,s)$ cannot be a central fibre of a 
toric degeneration unless
it carries a log structure of the correct sort. There are many reasons
this may not happen. If $s$ is poorly chosen, there may be
zero-dimensional strata of $\check X_0(B,\P,s)$ which do not have neighbourhoods
locally \'etale isomorphic to the toric boundary of an affine toric variety;
this is a minimal prerequisite. As a result, we have to
restrict attention to closed
gluing data induced by what we call \emph{open gluing data}. 
Explicitly, each vertex $v$ of $\P$ defines local models $V(v)\subseteq U(v)$
as follows. The piecewise linear function
$\varphi$ is defined locally up to affine linear functions. Choose a
representative $\varphi_v$ for $\varphi$ in a neighbourhood of $v$ which
takes the value $0$ at $v$. By extending the function linearly on each
cell, we can view this as a piecewise linear function on the fan
$\Sigma_v$, viewed as a fan in some $\RR^n$. We can then set
\[
P_v:=\{(m,r)\in \ZZ^n\times\ZZ\,|\,r\ge \varphi_v(m)\}.
\]
Noting that $(0,1)\in P_v$,  we set 
\begin{align*}
U(v):={} & \Spec \CC[P_v],\\
V(v):={} & \Spec \CC[P_v]/(z^{(0,1)}).
\end{align*}
Note that $z^{(0,1)}$ vanishes to order one on every toric divisor
of $U(v)$, so in fact $V(v)$ is the toric boundary of $U(v)$.
It turns out,
as we show in \cite{PartI}, that a necessary condition for $\check X_0(B,\P,s)$
to be the central fibre of a toric degeneration is that it is obtained
by dividing out $\coprod_{v\in\P} V(v)$ by an equivalence relation. 
In other words, we are gluing together the $V(v)$'s along Zariski
open subsets to obtain a scheme.\footnote{\cite{PartI} allowed the case that
the cells of $\P$ self-intersect. As a consequence, the equivalence relation
is merely \'etale and one obtains an algebraic space.}
Again, there is some choice of gluing, but now the gluing data are given
by equivariant identifications of open subsets of the various $V(v)$'s. 
We call this \emph{open gluing data}.

The advantage of using open gluing data is that each $V(v)$
carries a log structure induced by the
divisorial log structure $V(v)\subseteq U(v)$. 
These log structures are not identified under the open gluing maps, but
the ghost sheaves of the log structures are isomorphic. 
So the ghost sheaves $\overline{\M}_{V(v)}$
glue to give a ghost sheaf of monoids $\overline{\M}_{\check X_0(B,\P,s)}$.
Thus we see how $\varphi$ influences the log structure.

One then tries to construct a log structure with this ghost sheaf. This is
done in \cite{PartI} by building suitable extensions of the ghost sheaf with
$\shO_{\check X_0(B,\P,s)}^{\times}$, 
and this extension depends on some moduli (which may in general
be empty). The good situation is that one can find a closed
subset $Z\subseteq\check X(B,\P,s)$ of complex codimension at least two
and a log structure on $\check X_0(B,\P,s)$ along
with a morphism $\check X_0(B,\P,s)^{\dagger}\rightarrow 0^{\dagger}$ which
is log smooth away from $Z$. Furthermore, the ghost sheaf on
$\check X_0(B,\P,s)\setminus Z$ should be the given ghost sheaf of
monoids $\overline{\M}_{\check X_0(B,\P,s)}$ restricted to
$\check X_0(B,\P,s)\setminus Z$. We call such a log scheme with morphism
to $0^{\dagger}$ a \emph{log Calabi-Yau space}.

The technical heart of \cite{PartI} is an explicit classification of
log Calabi-Yau spaces with given intersection complex $(B,\P,\varphi)$,
modulo some assumptions on the singularities of $B$ called \emph{simplicity}.
The definition of simplicity is rather involved, so we will not give it
here, but it essentially says that not too much topology of $\check X(B)$
(or $X(B)$) can be hiding over the singular locus of $B$.

A main result of \cite{PartI}, (Theorem 5.4) is then

\begin{theorem}
Given $(B,\P,\varphi)$ simple, the set of
log Calabi-Yau spaces with intersection complex $(B,\P,\varphi)$ 
modulo isomorphism preserving $B$ (i.e., does not interchange irreducible 
components) 
is $H^1(B,i_*\check \Lambda\otimes \CC^{\times})$.
An isomorphism is said to preserve $B$ if it induces the identity on
the intersection complex.
\end{theorem}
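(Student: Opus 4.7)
The plan is to classify log Calabi-Yau structures on the scheme $\check X_0(B,\P,s)$ by reducing the classification problem to the cohomology of a sheaf on $B$ built out of local models, and then identifying this sheaf with $i_*\check\Lambda\otimes\CC^{\times}$. A log Calabi-Yau structure is, concretely, an extension of sheaves of monoids
\[
1 \to \O_{\check X_0}^{\times} \to \M \to \overline{\M}_{\check X_0} \to 0
\]
compatible with a chart to $0^{\dagger}$, which is log smooth outside a set $Z$ of complex codimension $\ge 2$. The ghost sheaf $\overline{\M}_{\check X_0}$ is canonically determined by $(B,\P,\varphi)$ (via the local models $V(v)\subseteq U(v)$ described in the preceding discussion), so the classification is really a classification of such extensions.

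First, I would carry out a local analysis on $B_0$. Near a point $b\in B_0$ lying in the interior of a cell $\tau\in\P$, a small neighbourhood of the corresponding stratum in $\check X_0$ admits a preferred ``toric'' log structure coming from the local model, and any two log structures of the required form differ by an automorphism. The automorphism is encoded by a choice of invertible element along each tangent direction to $\tau$, i.e.\ by an element of $\Lambda_{B,b}\otimes\CC^{\times}$ acting on the extension. Working modulo isomorphisms that preserve $B$ (which kill the action along normal directions but leave the ``tangential'' directions free), one obtains that germs of such log structures at $b$ form a torsor under $\check\Lambda_b\otimes\CC^{\times}$. Globalising, define a sheaf $\shLS$ on $\check X_0$ of germs of log Calabi-Yau structures with the prescribed ghost sheaf; its restriction to the open set over $B_0$ is a torsor under the sheaf associated to $\check\Lambda\otimes\CC^{\times}$ pulled back from $B$ via the projection $f:\check X_0\setminus Z\rightarrow B_0$.

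Second, I would push down to $B$. Using that the fibres of $f$ over $B_0$ are toric with contractible real tori, the Leray spectral sequence collapses enough to identify $H^i(\check X_0\setminus Z,\shLS)$ with $H^i(B_0,\check\Lambda\otimes\CC^{\times})$ in low degrees. Then the heart of the matter is the extension across $\Gamma=B\setminus B_0$: simplicity of $(B,\P,\varphi)$ is precisely the combinatorial hypothesis under which the local classification of log structures at points of $\check X_0$ lying over $\Gamma$ (the part of $\check X_0$ near $Z$) is controlled by monodromy-invariants of $\check\Lambda$, which is what the stalk of $i_*\check\Lambda$ computes. This identifies the sheaf of ``global germs of log CY structures'' with $i_*\check\Lambda\otimes\CC^{\times}$ on $B$. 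A standard Čech-type argument then shows that existence reduces to the vanishing of an obstruction in $H^2(B,i_*\check\Lambda\otimes\CC^{\times})$ along open covers adapted to $\P$ (where the canonical local models provide a distinguished trivialization, giving a basepoint and, in particular, vanishing the obstruction), and the set of structures modulo $B$-preserving isomorphism is the torsor $H^1(B,i_*\check\Lambda\otimes\CC^{\times})$.

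Third, I would verify that the ``isomorphism preserving $B$'' equivalence relation matches the coboundary relation on cocycles: a $B$-preserving isomorphism is locally an automorphism of the local model acting trivially on strata, hence is a section of $\check\Lambda\otimes\CC^{\times}$ on each chart, and changing a 1-cocycle by such a family is exactly a coboundary. Combined with the torsor structure, this yields the claimed bijection with $H^1(B,i_*\check\Lambda\otimes\CC^{\times})$.

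The main obstacle is the second step, namely controlling the behaviour of log structures over the discriminant locus $\Gamma$. Away from $\Gamma$ the picture is formal and follows general Kato-style deformation theory of log structures, but at a point of $\Gamma$ one must show that a log structure extending across a neighbourhood of a point of $Z$ is classified by precisely the stalk of $i_*\check\Lambda\otimes\CC^{\times}$ at the corresponding singular point of $B$. This is where simplicity enters: it is the exact hypothesis which forces the extension problem along $\Gamma$ to be solvable and forces the moduli of local extensions to be cut out by monodromy invariance. Carrying this out requires a case-by-case analysis of the local models of simple singularities and a direct verification that the obstruction groups $H^2$ supported on $\Gamma$ vanish under simplicity, which is the technical core of the argument.
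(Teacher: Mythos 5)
Your high-level picture is pointed the right way, but the route has gaps that are precisely the substance of the proof in \cite{PartI}. First, you fix the open gluing data $s$ at the outset and classify extensions of the ghost sheaf on the single scheme $\check X_0(B,\P,s)$; but the theorem classifies log Calabi-Yau \emph{spaces} with a given intersection complex, so both $s$ and the log structure vary, and their variations interact. The group $H^1(B,i_*\check\Lambda\otimes\CC^\times)$ in \cite{PartI} is assembled from the classification of open gluing data modulo cohomological equivalence \emph{together with} the classification of log structures on each resulting scheme, via an exact sequence of sheaves on $B$; a pushforward from a single fixed $\check X_0$ cannot see the $s$-direction. Second, the local input is not a torsor under a locally constant sheaf pulled back from $B$. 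In \cite{PartI}, log smooth structures over $\check X_0\setminus Z$ with the prescribed ghost sheaf are identified with nowhere-vanishing sections of a coherent sheaf $\shLS^+_{\mathrm{pre}}$ supported on the codimension-one strata of $\check X_0$, whose restriction to a stratum $X_\rho$ is a line bundle determined by the change of slope of $\varphi$ across $\rho$. The zero locus of such a section is what produces $Z$, and the section must in addition satisfy a normalization (positivity) condition for the log CY space to be smoothable --- a condition your sketch drops entirely. Passing from these line-bundle sections plus gluing data to $H^1(B,i_*\check\Lambda\otimes\CC^\times)$ is a genuine combinatorial computation, not a Leray collapse.

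Third, your existence argument is wrong. You assert that the canonical local models provide a distinguished trivialization of the torsor and hence a basepoint, but --- as the surrounding text of the survey emphasizes --- the divisorial log structures on the charts $V(v)\subseteq U(v)$ are \emph{not} identified under the open gluing maps; only their ghost sheaves are. There is no canonical global log structure, and ``the extension depends on some moduli (which may in general be empty).'' Nonemptiness is one of the things simplicity is used to establish, along with the codimension-$\ge 2$ bound on $Z$. Your summary of simplicity as forcing ``the moduli of local extensions to be cut out by monodromy invariance'' gestures at the right stalk of $i_*\check\Lambda$, but the actual obstruction sits in the geometry of zero loci of coherent sections on the toric strata, not in an abstract torsor-extension problem over $\Gamma$.
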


So the moduli space is an algebraic torus (or a disjoint union of algebraic
tori) of dimension equal to $\dim_\CC H^1(B,i_*\check\Lambda\otimes \CC)$.
In \cite{PartII}, we in fact show the dimension of this torus is the
dimension of $H^1(\shX_t,\shT_{\shX_t})\cong H^{n-1,1}(\shX_t)$ for
a smooth fibre $\shX_t$ of a smoothing $\shX\rightarrow D$ of 
$X_0(B,\P,s)^{\dagger}$. This is the expected dimension, as this
latter vector space is the tangent space to the moduli space of
$\shX_t$.

Now assume given a log Calabi-Yau space $X_0:=X_0(B,\P,s)^{\dagger}
\rightarrow 0^{\dagger}$. Our goal is to use the log structure to
provide ``initial conditions'' to produce $k$-th order deformations $X_k
\rightarrow\Spec\CC[t]/(t^{k+1})$, order by order. To do so, we will glue
together standard thickenings of ``pieces'' of $X_0$, modifying standard
gluings by a complicated system of data we call a \emph{structure}. 

First, the ``pieces'' of $X_0$ we consider are toric open affine subsets
of strata of $X_0$. Recall that strata of $X_0$ are indexed by cells
$\tau\in\P$, corresponding to a projective toric variety $\PP_{\tau}$. 
Recall also that if $\omega\subseteq\tau$, the normal cone to $\tau$
along $\omega$ is a cone in the fan defining $\PP_{\tau}$ and hence
defines an open affine subset of $\PP_{\tau}$. We call this open
affine subset $V_{\omega,\tau}\subseteq\PP_{\tau}$; note
\[
V_{\omega,\tau}=\PP_{\tau}\setminus \bigcup_{\rho\subseteq\tau\atop
\omega\not\subseteq\rho} \PP_{\rho}.
\]
For example, if
$\omega$ is a vertex of $\tau$, then $V_{\omega,\tau}$ is the standard
toric open affine subset of $\PP_{\tau}$ containing the zero-dimensional
stratum of $\PP_{\tau}$ corresponding to $\omega$. 

Second, what are the thickenings of the sets $V_{\omega,\tau}$? These
can be described explicitly as follows. Choose a point $x$ in the interior
of $\omega$ not contained in the singular locus $\Gamma$ of $B$. We obtain
a fan $\Sigma_x$ in the tangent space $\Lambda_x\otimes_{\ZZ}\RR$
of not necessarily strictly convex cones consisting of
the tangent cones at $x$ of each cell $\sigma$ containing $\omega$. 
We can choose a representative $\varphi_x$ for $\varphi$ in a small
neighbourhood of $x$ which is zero along $\omega$, and this can then be
extended linearly on each cone of $\Sigma_x$ to view $\varphi_x$ as
a piecewise linear function $\varphi_x:\Lambda_x\otimes\RR\rightarrow\RR$.
This in turn defines a monoid
\[
P_x:=\{(m,r)\in \Lambda_x\times \ZZ\,|\, r\ge \varphi_x(m)\},
\]
completely analogous to the definition of $P_v$. 

For each maximal cell $\sigma$ containing $\tau$, let $n_{\sigma}
\in\check\Lambda_x$
denote the slope of $\varphi_x$ restricted to the tangent cone of
$\sigma$. We then define a monomial ideal in the ring $\CC[P_x]$
given by
\[
I_{\omega,\tau}^{>k}=\langle z^{(m,r)}\,|\, \hbox{
$(m,r)\in P_x$, $r-\langle n_{\sigma},m\rangle > k$ for some
$\sigma\in\P_{\max}$ with $\sigma\supseteq\tau$}\rangle.
\]
Then the desired standard thickening of $V_{\omega,\tau}$ is
\[
V^k_{\omega,\tau}:=\Spec\CC[P_x]/I_{\omega,\tau}^{>k}.
\]
One checks easily that if $k=0$, this recovers $V_{\omega,\tau}$, and if
$k>0$, then the reduced space of $V^k_{\omega,\tau}$ is $V_{\omega,\tau}$.
Thus this is indeed a thickening of $V_{\omega,\tau}$.

There is one point we have to be quite careful about. This definition 
would appear to depend on the point $x$, and identifications of
different tangent spaces $\Lambda_x$, $\Lambda_{x'}$ via parallel
transport depend on the path because of the presence of the singular locus.
We deal with this issue not by choosing a specific point $x$, but choosing
a specific maximal reference cell $\sigma$ containing $\tau$. We then
can identify any $\Lambda_x$ with $\Lambda_{\sigma}$, the well-defined
tangent space to $\sigma$, via parallel transport from $x$ directly into
$\sigma$. We will notate this additional choice of reference cell by 
writing $V^k_{\omega,\tau,\sigma}$. A different choice of reference
cell $\sigma'$ gives a space
$V^k_{\omega,\tau,\sigma'}$ abstractly, but not canonically, isomorphic to
$V^k_{\omega,\tau,\sigma}$. This will prove important
below. We also use the notation for the coordinate rings
\[
R^k_{\omega,\tau,\sigma}:=\CC[P_x]/I_{\omega,\tau}^{>k},
\]
again keeping in mind this choice of reference cell.

There are also natural gluings between these various thickened schemes.
One notes that given $\tau_1\subseteq\tau_2\subseteq\tau_3$ there are
natural surjections
\[
R^k_{\tau_1,\tau_3,\sigma}\rightarrow R^k_{\tau_1,\tau_2,\sigma}
\]
giving a closed embedding $V^k_{\tau_1,\tau_2,\sigma}\rightarrow
V^k_{\tau_1,\tau_3,\sigma}$, and natural inclusions
\[
R^k_{\tau_1,\tau_3,\sigma}\rightarrow R^k_{\tau_2,\tau_3,\sigma},
\]
giving open embeddings $V^k_{\tau_2,\tau_3,\sigma}
\rightarrow V^k_{\tau_1,\tau_3,\sigma}$.

If $B$ has no singularities, then the reference cell $\sigma$ is
not important, and we drop this from the notation in this case.
In particular, it is easy to check that if we take, say, $\tau_1$ to be
a fixed vertex $v$, and we take the limit of the directed system
$\{V^k_{v,\tau}\,|\,v\in\tau\}$ of schemes, we obtain a $k$-th order
thickening $V^k(v)$ of $V(v)$ given by $V^k(v)=U(v)\times_{\AA^1} 
\Spec\CC[t]/(t^{k+1})$, with $U(v)\rightarrow\AA^1$ the morphism given
by $z^{(0,1)}$. This is precisely the kind of vanilla smoothing the
log structure leads us to expect. Note we can write this direct limit
of schemes as
\[
\Spec \lim_{\longleftarrow\atop\tau} R^k_{v,\tau}.
\]

The basic idea then will be to modify the various maps above by some
additional data.

To understand why we need these modifications, let us consider the single 
most important example, that of an isolated singularity of focus-focus
type in a two-dimensional $B$.

We suppose $\P$ contains two maximal cells $\sigma_1,\sigma_2$, with
$\sigma_1\cap\sigma_2=\tau$, as depicted in Figure \ref{TwoTriangles}.
Note that the intersection of the two coordinate charts is
$(\sigma_1\cup\sigma_2)\setminus\tau$, and the transition map is then
the identity on $\sigma_1\setminus\tau$ and is given by the linear 
transformation $\begin{pmatrix} 1&0\\ 1&1\end{pmatrix}$ on $\sigma_2\setminus
\tau$. Together,
these two charts define an integral affine structure on 
$(\sigma_1\cup\sigma_2)\setminus\Gamma$, where $\Gamma=\{p\}$ 
is the common point of the two cuts.

\begin{figure}
\input{TwoTriangles.pstex_t}
\caption{The fundamental example. The diagram shows the affine embeddings
of two charts, obtained by cutting the union of two triangles as indicated
in two different ways. Each triangle is a standard simplex.}
\label{TwoTriangles}
\end{figure}

We then take $\varphi$ to be single-valued, identically $0$ on $\sigma_1$
and taking the value $1$ at the right-hand vertex. 

One now finds
\begin{align*}
R^k_{\tau,\sigma_1,\sigma_1}= {} & \CC[x,y,w^{\pm 1}]/(y^{k+1})\\
R^k_{\tau,\sigma_2,\sigma_2}= {} & \CC[x,y,w^{\pm 1}]/(x^{k+1})\\
R^k_{\tau,\tau,\sigma_i}= {} & \CC[x,y,w^{\pm 1}]/(x^{k+1},y^{k+1}).
\end{align*}
Here, if we use the chart on the left, i.e., choose a point $s$
below $p$ and work in $P_s\subseteq\Lambda_s\oplus\ZZ$, 
the variables $x,y$ and $w$
are identified with elements of $\CC[P_s]$ as
\[
x=z^{(-1,0,0)},\quad y=z^{(1,0,1)},\quad w=z^{(0,1,0)}.
\]
We have the natural surjections $R^k_{\tau,\sigma_i,\sigma_i}
\rightarrow R^k_{\tau,\tau,\sigma_i}$, and we identify 
$R^k_{\tau,\tau,\sigma_1}$ with $R^k_{\tau,\tau,\sigma_2}$ by identifying
$\Lambda_{\sigma_1}$ and $\Lambda_{\sigma_2}$ by parallel
transport through $s$. Since we have
written everything in the left-hand chart, where $\Lambda_{\sigma_1}$
and $\Lambda_{\sigma_2}$ are identified via parallel transport through
$s$, this identification  is the trivial one. We can thus glue together
the coordinate rings of the thickenings as
\[
R^k_{\tau,\sigma_1,\sigma_1}\times_{R^k_{\tau,\tau,\sigma_i}}
R^k_{\tau,\sigma_2,\sigma_2}.
\]
This fibred product of rings is easily seen to be isomorphic to the ring
\[
\CC[X,Y,W^{\pm 1},t]/(t-XY, t^{k+1}),
\]
where $X=(x,x)$, $Y=(y,y)$,
$W=(w,w)$, and $t=(xy,xy)$ as elements of the Cartesian product of rings.

On the other hand, suppose we instead identified $R^k_{\tau,\tau,\sigma_1}$
and $R^k_{\tau,\tau,\sigma_2}$ by parallel transport through a point
$s'$ lying above $p$. To do this, we can work in the right-hand
chart. Again, $x,y$ and $w$ are defined using the tangent vectors
$(-1,0), (1,0)$ and $(0,1)$ in $\sigma_1$, and these
are transported to the same tangent vectors in $\sigma_2$ in the second
chart. However, to compare this with our original description of $R^k_{\tau,
\tau,\sigma_2}$, we need to think of these as tangent vectors in $\sigma_2$
in the original chart, i.e., the left-hand chart. There, these tangent
vectors are $(-1,1)$, $(1,-1)$ and $(0,1)$ respectively. Thus we obtain
an isomorphism $R^k_{\tau,\tau,\sigma_1}\rightarrow R^k_{\tau,\tau,\sigma_2}$
given by
\begin{equation}
\label{auto1}
x\mapsto xw,\quad y\mapsto yw^{-1}, \quad w\mapsto w.
\end{equation}
Using this identification, we obtain a composed map $R^k_{\tau,\sigma_1,
\sigma_1}\rightarrow R^k_{\tau,\tau,\sigma_1}\rightarrow R^k_{\tau,\tau,
\sigma_2}$, leading to a fibred product
\[
R^k_{\tau,\sigma_1,\sigma_1}\times_{R^k_{\tau,\tau,\sigma_2}}
R^k_{\tau,\sigma_2,\sigma_2}
\cong \CC[X,Y,W^{\pm 1},t]/(XY-tW, t^{k+1}),
\]
where now
\[
X=(x,xw),\quad Y=(yw,y),\quad W=(w,w), \quad t=(xy,xy).
\]
Note that while this new ring is abstractly isomorphic to the previous
ring, there is no isomorphism as $\CC[t]/(t^{k+1})$-algebras.

So the gluing is not well-defined, and this is caused by the
singularities of $B$. The correct smoothing in this case will depend
on the choice of log structure, but in any event we expect
it should be
a family of the form $\Spec \CC[X,Y,W^{\pm 1},t]/(XY-f(W)t)$
for some function $f(W)$ which vanishes along the $W$-axis precisely
at the points where the given log structure on $X_0(B,\P,s)$ is not fine.
Clearly $f$ is then determined by the log structure up to invertible
functions. Let us take
for the sake of this example the function $f(W)=1+W$, noting that
$f(W)=1+W^{-1}$ would do just as well. We can now modify the gluings using
Figure \ref{TwoTriangles2}.

\begin{figure}
\input{TwoTriangles2.pstex_t}
\caption{}
\label{TwoTriangles2}
\end{figure}

In this figure, we have drawn two rays contained in $\tau$ emanating
from the singular point, and labelled these two arrows with the functions
$1+w^{-1}$ and $1+w$ respectively. These rays tell us
that if we try to identify $R^k_{\tau,\tau,\sigma_1}$ with $R^k_{\tau,\tau,
\sigma_2}$ using parallel transport between the two maximal cells, we need
to modify the identification via an automorphism given by the crossing
of one of these rays. Here, we will get different automorphisms depending
on whether we cross above or below the singularity $p$. If we cross below,
the ray tells us to use an automorphism of $R^k_{\tau,\tau,\sigma_1}$ given by
\begin{equation}
\label{auto2}
x\mapsto x(1+w),\quad y\mapsto y(1+w)^{-1},\quad w\mapsto w,
\end{equation}
while if we cross above the singularity, we use the automorphism
\begin{equation}
\label{auto3}
x\mapsto x(1+w^{-1}),\quad y\mapsto y(1+w^{-1})^{-1},\quad w\mapsto w.
\end{equation}
Actually, note that $1+w$ or $1+w^{-1}$ is not invertible in
$R^k_{\tau,\tau,\sigma_i}$, so we need to modify this ring by localizing it at
$1+w$ (or equivalently $1+w^{-1}$).
Let's see how this affects the fibred products 
$R^k_{\tau,\sigma_1,\sigma_1}
\times_{R^k_{\tau,\tau,\sigma_2}} R^k_{\tau,\sigma_2,\sigma_2}$.

If we use parallel transport below the singular point, then the map
$R^k_{\tau,\sigma_1,\sigma_1}\rightarrow R^k_{\tau,\tau,\sigma_2}$ is
just given by \eqref{auto2}, while $R^k_{\tau,\sigma_2,\sigma_2}
\rightarrow R^k_{\tau,\tau,\sigma_2}$ remains the canonical one. One then
finds
\[
R^k_{\tau,\sigma_1,\sigma_1}
\times_{R^k_{\tau,\tau,\sigma_2}} R^k_{\tau,\sigma_2,\sigma_2}
\cong \CC[X,Y,W^{\pm},t]/(XY-(1+W)t, t^{k+1}),
\]
with
\[
X=(x,x(1+w)),\quad Y=(y(1+w),y),\quad W=(w,w),\quad t=(xy,xy).
\]
On the other hand, if we use parallel transport above the singular point,
we need to compose the automorphism \eqref{auto3} with the isomorphism
\eqref{auto1}, giving a map $R^k_{\tau,\sigma_1,\sigma_1}
\rightarrow R^k_{\tau,\tau,\sigma_2}$ given by
\[
x\mapsto xw(1+w^{-1})=x(1+w),\quad y\mapsto yw^{-1}(1+w^{-1})^{-1}
=y(1+w)^{-1},\quad w\mapsto w.
\]
Thus this map is exactly the same as \eqref{auto2},
and hence we get the same fibred product. The
glued thickenings are independent of choices. 
The introduction of the extra automorphisms removes the problems caused
by monodromy.

This is a very local situation. The next problem which arises is that
more globally, we need to propagate the automorphisms attached to the
rays. Indeed, imagine now that the picture we are looking at is contained
in a more complex situation, as on the left-hand side of Figure
\ref{proprays}. Here we have two singularities, and rays emanate in each
direction from the singularity. Let us follow the rule that any identification
of rings which involves parallel transport through a ray must be modified
by the appropriate automorphism as described above.
Then looking at the vertex $v_1$, say, we need to glue together five
irreducible components, but only one of these gluings is modified.
These gluings would not be compatible. To correct for this, one can
extend the ray indefinitely, and ``parallel transport'' the automorphism
along the ray. There is a precise sense in which this can be done. This is shown
on the right-hand picture in Figure \ref{proprays}, with the dotted lines
showing the extension of the rays. Now if crossing a ray in one direction
produces
the inverse of the automorphism given by crossing the ray the other direction,
one finds that gluing at the vertices $v_1$ and $v_2$ have now become
compatible.

A new problem arises, however, at the intersection point of the two rays.
Again, when we try to identify various rings using parallel transport
and automorphisms induced by crossing rays, we don't want the choice
to depend on the particular path we take. Because in general the two
automorphisms attached to the rays don't commute, we again have trouble
at the point of intersection. 

This is in fact where our thinking stood in early 2004, shortly before
the release of Kontsevich and Soibelman's paper \cite{KS2}. 
The solution to this problem, really the key part of Kontsevich and 
Soibelman's argument,
is to add new rays emanating from the point of intersection
of the old rays, as depicted in Figure \ref{ksrays}. These rays
are added in such a way as to guarantee that the composition of automorphisms
given by a loop around the intersection point is in fact the identity,
and thus the identifications will be independent of the choice of path.

\begin{figure}
\input{proprays.pstex_t}
\caption{}
\label{proprays}
\end{figure}

\begin{figure}
\input{ksrays.pstex_t}
\caption{}
\label{ksrays}
\end{figure}

The description here is somewhat vague, but demonstrates the basic idea.
We've seen how we obtain our degeneration by
gluing together basic pieces. 
Other than these different basic pieces,
in two dimensions, the main distinction between our approach
and the one taken by Kontsevich and Soibelman in \cite{KS2} is that
we work in the affine structure dual to the one \cite{KS2} works with.
They propogate automorphisms along gradient flow lines, but
we are able to propogate automorphisms along straight lines with respect
to the affine structure. This saves a great deal of trouble in higher
dimensions, where gradient flow lines will be much more difficult
to control. That makes it possible for us to obtain results in all
dimensions.

We of course have not made it particularly clear how we really encode
automorphisms and how they propagate, but we will make at least the first
point clearer in the next section. For the second point, the main thing
is that they propagate along straight lines; this in fact is crucial
for guaranteeing that the automorphisms don't start to involve monomials
with poles on irreducible components of $X_0$. 
So here we see something which looks tropical
already, with the union of rays looking like a tropical tree. Again, we
will make this more precise in the next section.

In higher dimensions, the argument becomes much more
subtle. Instead of rays carrying automorphisms, codimension one wall 
carry automorphisms, and one needs to be very careful about how these walls
propagate. Furthermore, there are great technical difficulties concerning
convergence of the algorithm near the discriminant locus. This was handled
in \cite{KS2} in two dimensions via an argument showing new rays added
can be guaranteed to avoid a neighbourhood of each singularity, but this
is done by choosing the metric carefully. In higher dimensions, this is
not true, and instead we used algebraic methods to prove convergence.
All these difficulties were overcome in \cite{Annals}.

In \cite{Gbook} I wrote down a complete
version of the proof in two dimensions; this has the advantage of avoiding 
most of the really technical issues. Hopefully, \cite{Gbook} provides a
gentler entry point into the ideas outlined here than the main paper
\cite{Annals}.

We now turn to a more precise description of the automorphisms involved,
and give evidence that the description of the explicit deformations
(which we view as $B$-model information) really encodes $A$-model
information on the mirror.

\section{The tropical vertex}

To simplify the discussion, we will work in this section only with the
simplest rings which occur in the previous section, of the form
$R^k_{\sigma,\sigma,\sigma}$ where $\sigma$ is a maximal (two-dimensional)
cell. This ring is isomorphic to $\CC[x^{\pm 1},y^{\pm 1},t]/(t^{k+1})$.
Let us work formally instead, setting 
\[
R=\CC[x^{\pm 1},y^{\pm 1}]\lfor t\rfor.
\]
This is the ring of formal power series in $t$ with coefficients
Laurent polynomials in $x$ and $y$.
Let $f\in R$ be of the form
\[
f=1+tx^ay^b\cdot g(x^ay^b,t),\quad g(z,t)\in \CC[z]\lfor t\rfor.
\]
Then this defines an automorphism $\theta_{(a,b),f}$ of $R$ as a 
$\CC\lfor t\rfor$-algebra given by
\[
\theta_{(a,b),f}(x)= x \cdot f^{b},\quad \theta_{(a,b),f}(y)=y\cdot f^{-a}.
\]
Note that $\theta_{(a,b),f}^{-1}=\theta_{(a,b),f^{-1}}$. These automorphisms
have the further property that they preserve the holomorphic symplectic
form ${dx\over x}\wedge {dy\over y}$.

We define the \emph{tropical vertex group} $\VV$ to be the completion
with respect to the maximal ideal $(t)\subseteq\CC\lfor t\rfor$
of the  subgroup of 
$\CC\lfor t\rfor$-algebra automorphisms of $R$ generated by all
such automorphisms. Note that infinite products are defined in
$\VV$ only if only finitely many factors are non-trivial modulo $t^k$ for every
$k>0$.
This is a slight modification of a group originally introduced by 
Kontsevich and Soibelman in \cite{KS2}. 

We now describe a local version of the rays described in the previous section.
For convenience, set $M=\ZZ^2$, $M_{\RR}=M\otimes_{\ZZ}\RR$, 
and identify $\CC[x^{\pm 1},y^{\pm 1}]$
with $\CC[M]$. 

\begin{definition} A \emph{ray} or \emph{line} in $M_{\RR}$ is a pair
$(\fod,f_{\fod})$ for some $\fod=\RR_{\le 0}m$ if $\fod$ is a ray and
$\fod=\RR m$ if $\fod$ is a line, where $m\in M\setminus\{0\}$.
Furthermore, 
\[
f_{\fod}=1+tz^m\cdot g(z^m,t)\in R,\quad g(z,t)\in \CC[z]\lfor t\rfor,
\]

A \emph{scattering diagram} $\foD$ is a collection
of rays and lines $\{(\fod,f_{\fod})\}$ with the property that for any
$k>0$, $f_{\fod}\equiv 1 \mod t^k$ for all but a finite number of
elements of $\foD$.
\end{definition}

Given a scattering diagram $\foD$, let 
\[
\Supp\foD=\bigcup_{(\fod,f_{\fod})\in\foD} \fod.
\]
If we are given a path $\gamma:[0,1]\rightarrow M_{\RR}\setminus\{0\}$
with $\gamma(0),\gamma(1)\not\in\Supp(\foD)$ and $\gamma$ being transversal
to each ray it crosses, then we can define the \emph{path-ordered product}
$\theta_{\gamma,\foD}\in\VV$ which is a composition of automorphisms
associated to each ray that $\gamma$ crosses. We define the path-ordered
product for each power $k>0$, and take the limit. For any given $k>0$, 
we can find numbers
\[
0<t_1\le t_2\le \cdots\le t_s<1
\]
and elements $\fod_i\in\foD$ with $f_{\fod_i}\not\equiv 1\mod t^k$
such that $\gamma(t_i)\in\fod_i$, $\fod_i\not=\fod_j$ if $t_i=t_j$, and
$s$ taken as large as possible. For each $i$ define $\theta_{\fod_i}$
to be the automorphism defined as follows.
Let $n\in N=\Hom(M,\ZZ)$ be
the unique primitive element which vanishes on $\fod_i$ and is negative
on $\gamma'(t_i)$. Then define $\theta_{\fod_i}$ to be the automorphism
\[
\theta_{\fod_i}(z^m)=z^mf_{\fod_i}^{\langle n,m\rangle}.
\]
Note this is of the form $\theta_{(a,b),f_{\fod}}$ for suitable choice of
$(a,b)$. We then define
\[
\theta^k_{\gamma,\foD}=\theta_{\fod_s}\circ\cdots\circ\theta_{\fod_1}.
\]
Note that
if $\gamma$ crosses two rays at the same time, the order doesn't matter
as one checks easily that two automorphisms commute if they are associated
with the same underlying $\fod\subseteq M_{\RR}$. Finally, we define
\[
\theta_{\gamma,\foD}=\lim_{k\rightarrow\infty} \theta^k_{\gamma,\foD}.
\]

We can then express the essential lemma of \cite{KS2} in this context:

\begin{proposition}
\label{KSLemma}
Let $\foD$ be a scattering diagram. Then there is a scattering diagram
$\Scatter(\foD)$ such that $\Scatter(\foD)\setminus\foD$ consists
just of rays and $\theta_{\gamma,\Scatter(\foD)}$ is the identity for
any loop $\gamma$ around the origin.
\end{proposition}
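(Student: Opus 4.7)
The plan is to prove the statement by induction on the order of vanishing in $t$, adding finitely many rays at each order to cancel the obstruction to $\theta_{\gamma,\foD}$ being trivial. Concretely, I will construct, for each $k\ge 0$, a scattering diagram $\foD_k$ with $\foD\subseteq \foD_k$, with $\foD_k\setminus\foD$ a finite collection of rays, and with $\theta_{\gamma,\foD_k}\equiv\op{id}\pmod{t^{k+1}}$ for any small loop $\gamma$ encircling the origin once counterclockwise. Passing to the union $\Scatter(\foD):=\foD\cup\bigcup_k(\foD_k\setminus\foD)$ then gives the desired diagram: it is a scattering diagram since, modulo any fixed $t^N$, only finitely many rays contribute nontrivially, and $\theta_{\gamma,\Scatter(\foD)}=\op{id}$ follows by taking limits. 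Since the path-ordered product is invariant under homotopies through the complement of $\Supp(\foD)\setminus\{0\}$, and any loop around the origin is a (multiple of a) small loop up to such homotopy, the conclusion extends to arbitrary loops.

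For the base case $k=0$ take $\foD_0=\foD$: modulo $t$, every $f_{\fod}\equiv 1$, so $\theta_{\gamma,\foD}\equiv\op{id}\pmod{t}$. For the inductive step, set $\theta:=\theta_{\gamma,\foD_k}$, so that $\theta\in\op{id}+t^{k+1}\op{End}(R)$. I will work in the Lie algebra $\mathfrak{v}$ underlying $\VV$: it is generated topologically by the derivations $\partial_{(m,n)}(z^{m'})=\langle n,m'\rangle z^{m+m'}$ for $m\in M\setminus\{0\}$ and $n\in M^\perp\cap N$ primitive orthogonal to $m$, with the key bracket identity $[\partial_{(m_1,n_1)},\partial_{(m_2,n_2)}]$ being a $\ZZ$-linear combination of the $\partial_{(m_1+m_2,\cdot)}$. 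The automorphism $\theta_{(a,b),1+ct^j z^m}$ has logarithm $ct^j\partial_{(m,n)}$ modulo $t^{j+1}$, so the subspace of $\mathfrak{v}$ generated by ``single-ray'' derivations exhausts the homogeneous pieces graded by $m$. Writing $\log\theta\equiv t^{k+1}\sum_{m\in S}c_m\partial_{(m,n_m)}\pmod{t^{k+2}}$ for a finite set $S\subseteq M\setminus\{0\}$, I define $\foD_{k+1}$ by adjoining to $\foD_k$, for each $m\in S$, the ray $(\RR_{\le 0}m,\,1-c_m t^{k+1}z^m)$, with signs arranged so that traversing $\gamma$ through this new ray contributes exactly $-c_mt^{k+1}\partial_{(m,n_m)}$ at leading order. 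Since each newly added $f_{\fod}\equiv 1\pmod{t^{k+1}}$, the property $\theta_{\gamma,\foD_{k+1}}\equiv\op{id}\pmod{t^{k+1}}$ is preserved, and the new contributions cancel the obstruction at order $t^{k+1}$, giving $\theta_{\gamma,\foD_{k+1}}\equiv\op{id}\pmod{t^{k+2}}$.

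The main obstacle is establishing the algebraic lemma that the Lie algebra element $\log\theta$ appearing at each step really is a finite sum over $m\in M\setminus\{0\}$ of ``ray-type'' derivations $\partial_{(m,n_m)}$. This requires a careful analysis of $\mathfrak{v}$, showing that it decomposes as a direct sum indexed by $m\in M\setminus\{0\}$ of one-dimensional (per weight in $t$) summands in each graded piece, with the bracket respecting the grading by $M$. Given that decomposition, finiteness of $S$ at each step follows because only finitely many rays of $\foD_k$ have $f_{\fod}\not\equiv 1\pmod{t^{k+2}}$, limiting which $m$ can appear. A secondary subtlety is checking homotopy invariance of $\theta_{\gamma,\foD_k}$ in the presence of newly added rays that share supports or pass through the origin's complement: this reduces to the standard verification that path-ordered products around contractible loops not enclosing the origin are trivial, which follows inductively from the same construction applied locally at each crossing point away from $0$. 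Once these pieces are in place, the induction closes and $\Scatter(\foD)$ has all the required properties.
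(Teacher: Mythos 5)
Your proof takes essentially the same approach as the paper: induct on the order in $t$, use the Lie algebra of $\VV$ to identify the leading-order obstruction $\log\theta_{\gamma,\foD_k}$ as a finite sum of ray-type derivations graded by $m\in M\setminus\{0\}$, and cancel each summand by adjoining a new ray with the appropriate function and sign. The paper expresses the same decomposition more concretely via the coordinate formulas $\theta(x)=x\bigl(1+\sum b_ic_it^kx^{a_i}y^{b_i}\bigr)$ and $\theta(y)=y\bigl(1-\sum a_ic_it^kx^{a_i}y^{b_i}\bigr)$, whereas you phrase it abstractly through $\log\theta$ and the one-dimensional-per-nonzero-$m$ decomposition of the symplectic derivation algebra; these are the same observation, and the commutation-modulo-$t^{k+2}$ point you invoke is exactly what the paper uses to justify that the added rays cancel the obstruction without disturbing lower orders.
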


The proof is very simple and algorithmic; I give a quick outline. One
constructs a sequence of scattering diagrams $\foD_1=\foD,\foD_2,\ldots$
with the property that $\theta_{\gamma,\foD_k}\equiv\id \mod t^{k}$. 
This is clearly true for $\foD_1$, so we proceed inductively, assuming
we have constructed $\foD_k$. Then one shows (by looking at the Lie algebra
of $\VV$) that
\begin{align*}
\theta_{\gamma,\foD_k}(x)= {} & x\sum_{i=1}^n b_ic_it^kx^{a_i}y^{b_i}\\
\theta_{\gamma,\foD_k}(y)= {} & -y\sum_{i=1}^n a_ic_it^kx^{a_i}y^{b_i}
\end{align*}
for integers $a_i,b_i$ (with $a_i,b_i$ not both zero) and $c_i\in\CC$.
Then one obtains $\foD_{k+1}$ by adding rays 
\[
(\RR_{\le 0}(a_i,b_i), 1\pm c_it^kx^{a_i}y^{b_i}),\quad 1\le i\le n
\]
with the sign chosen so that when $\gamma$ crosses this ray, it produces
the automorphism
\[
x\mapsto x(1-b_ic_it^kx^{a_i}y^{b_i})\mod t^{k+1},\quad
y\mapsto y(1+a_ic_it^kx^{a_i}y^{b_i})\mod t^{k+1}.
\]
Since this automorphism will commute with all other automorphisms in
$\foD_k$ modulo $t^{k+1}$, inserting these rays will precisely
cancel out the contributions to $\theta_{\gamma,\foD_k}$ to order $k$,
and thus $\theta_{\gamma,\foD_{k+1}}\equiv \id\mod t^{k+1}$.

It is very easy to program this algorithm and explore these scattering
diagrams. They appear to have a very rich and fascinating structure. The
following simple examples show their complexity.

\begin{example}
\label{scatdiagexample}
Consider the case that
\[
\foD=\{(\RR (1,0), (1+tx^{-1})^\ell), (\RR (0,1), (1+ty^{-1})^\ell)\}
\]
for $\ell$ some positive integer. For $\ell=1$, it is easy to check that
\[
\Scatter(\foD)\setminus\foD=\{(\RR_{\ge 0}(1,1),1+t^2x^{-1}y^{-1})\}.
\]
Figure \ref{oneone} shows explicitly what the automorphisms are as 
one traverses the depicted loop; the reader can easily check that the 
composition of the five automorphisms is the identity.

\begin{figure}
\input{oneone.pstex_t}
\caption{$\Scatter(\foD)$ for $\ell=1$. Here the automorphisms
are given
explicitly, and the identity $\theta_{\gamma,\Scatter(\foD)}$ is just the composition of the
given automorphisms.}
\label{oneone}
\end{figure}

If $\ell=2$, then one finds
\begin{eqnarray*}
\Scatter(\foD)
\setminus\foD=&&\{(\RR (n+1,n),(1+t^{2n+1}x^{-(n+1)}y^{-n})^2)| 
n\in\ZZ, n\ge 1\}\\
&\cup&
\{(\RR (n,n+1),(1+t^{2n+1}x^{-n}y^{-(n+1)})^2)| n\in\ZZ, n\ge 1\}\\
&\cup&\{(\RR (1,1), (1-t^2x^{-1}y^{-1})^{-4})\}.
\end{eqnarray*}
This was first found experimentally by myself and Siebert via a computer
program, and the first verification of this was given in \cite{GMN}.
It also follows immediately from the results of \cite{GPS} which will be
explained in what follows.

If $\ell=3$, the situation becomes even more complicated. First, as
noticed by Kontsevich, $\Scatter(\foD)$ has a certain periodicity. Namely,
\[
(\RR_{\ge 0} (m_1,m_2),f(x^{-m_1}y^{-m_2}))
\in\Scatter(\foD)
\]
if and only if
\[
(\RR_{\ge 0} (3m_1-m_2,m_1),f(x^{-(3m_1-m_2)}y^{-m_1}))
\in\Scatter(\foD),
\]
provided that $m_1,m_2$ and $3m_1-m_2$ are all positive.
In addition, there are rays with support
$\RR_{\ge 0} (3,1)$ and $\RR_{\ge 0} (1,3)$, hence by the
periodicity, there are also rays with support
\[
\RR_{\ge 0}(8,3),\ \RR_{\ge 0}(21,8),\ \ldots \ \ \
\text{and}\  \ \  \RR_{\ge 0}(3,8),\ \RR_{\ge 0}(8,21),\ \ldots 
\]
which converge to
the rays of slope $(3\pm\sqrt{5})/2$, corresponding to the two distinct
eigenspaces of the linear transformation
$\begin{pmatrix}3&-1\\1&0\end{pmatrix}$. Each of these rays
is of the form
\[
\big(\RR_{\ge 0}(m_1,m_2), (1+t^{m_1+m_2}x^{-m_1}y^{-m_2})^3\big).
\]
These are the only rays appearing outside of the cone generated by
the rays of slope $(3\pm\sqrt{5})/2$. 
On the other hand, inside this cone, every rational slope occurs, and
the attached functions are very complicated. For example, the function
attached to the line of slope 1 is
\[
\left(\sum_{n=0}^{\infty} {1\over 3n+1}\begin{pmatrix}4n\\ n\end{pmatrix}
t^{2n}x^{-n}y^{-n}\right)^9.
\]
Again, Siebert and 
I found this form via computer experiment, but it was verified
by Reineke in \cite{Rei}. Recently, Kontsevich has shown the functions
attached to all these rays are algebraic. For example, if $g$ denotes 
the $9$-th root of the above function, it satisfies the equation
\[
t^2x^{-1}y^{-1}g^4-g+1=0.
\]

This series of examples also makes contact with a number of other interesting
objects. On the one hand, Reineke in \cite{Rei} gave an interpretation
of the attached functions in terms of Euler characteristics of moduli
spaces of representaions of the Kronecker $\ell$-quiver, the quiver with
two vertices and $\ell$ arrows between them. On the other hand, these
diagrams are also closely related to the cluster algebras defined
by these quivers. This connection will be studied in more detail in
work with Hacking, Keel, and Kontsevich \cite{GHKK}.
\end{example}

We will now explain the enumerative interpretation for the functions
which arise in $\Scatter(\foD)$. To motivate this, let us return to
the tropical interpretation of \S\ref{tropgeomsect}. Begin, say,
with a tropical manifold $B$ which corresponds to a K3 surface, as depicted
in Figure \ref{tropicaldisk}, along with what we will call a \emph{tropical
disk}. This is almost a tropical curve, but it just ends at the point
$P$ without any balancing condition at $P$; meanwhile, it has other legs
terminating at the singularities of $B$. This is legal behaviour as
explained at the end of \S\ref{tropgeomsect}. Following the description
at the end of \S4, one can imagine disks over each leg terminating at
a singular point. Where these legs meet, one would like to glue these
disks together and continue along a cylinder over the segment adjacent
to $P$. Terminating at $P$, we roughly obtain a disk in $X(B)$
with boundary contained in the torus fibre over $P$, as depicted.
It is natural to ask how many ways the initial disks (possibly taking
multiple covers of these disks) can be glued together to give a new disk.

\begin{figure}
\input{tropicaldisk.pstex_t}
\caption{A tropical disk on an affine K3 surface. Here the $\times$'s indicate
singular points, while the disk ``ends'' at the point $P$.}
\label{tropicaldisk}
\end{figure}

Now compare this picture with what we have seen on the mirror side. Our
explicit degeneration really gives, as generic fibre, something like
$\check X(B)$. However, it is controlled by similar tropical information:
rays emanate from the singularities in the monodromy invariant direction,
just as in the case of the tropical curves. They collide, and the
Kontsevich-Soibelman result in Proposition \ref{KSLemma} 
gives new rays. So one may hope that 
this process precisely reflects holomorphic disks in $X(B)$ with boundary
on fibres of $X(B)\rightarrow B$. 

It is also worth mentioning work of Auroux \cite{Aur}, which makes more precise
the notion that the complex structure on one side should be 
determined by holomorphic disks on the other. This also provides
a posteriori support for the idea that there must be an enumerative
interpretation for the process of generating new rays.

It is usually difficult to work with holomorphic disks. It is often 
easier to translate problems involving holomorphic disks into problems
involving genuine Gromov-Witten invariants. 
We can do so for the problems being discussed here. 
Here then is the enumerative interpretation, in the simplest
situation, as explained in \cite{GPS}.

Suppose we are given distinct non-zero primitive vectors $m_1,\ldots,m_p
\in M$ and positive integers $\ell_1,\ldots,\ell_p$.
Consider the scattering diagram
\[
\foD=\{(\RR m_i, (1+tz^{-m_i})^{\ell_i})\,|\,1\le i\le p\}.
\]
Let $(\fod,f_{\fod})\in\Scatter(\foD)\setminus\foD$. We can always
assume that this is the only ray in $\Scatter(\foD)\setminus\foD$ with
a given underlying ray $\fod$. This is because if there are rays
$(\fod_1,f_{\fod_1}),(\fod_2,f_{\fod_2}),\ldots$ in $\Scatter(\foD)
\setminus\foD$ with $\fod_1=\fod_2=\cdots$,
we can replace this collection of rays with a single ray
$(\fod_1,\prod f_{\fod_i})$ without affecting $\theta_{\gamma,\Scatter(\foD)}$.
With this assumption, $f_{\fod}$ is uniquely determined by $\foD$. We wish
to interpret $f_{\fod}$ enumeratively.

To do this, consider a complete fan $\Sigma$ in $M_{\RR}$ whose one-dimensional
rays are 
\[
\RR_{\le 0}m_1,\ldots,\RR_{\le 0}m_p, \fod.
\] 
Assume for the sake
of simplicity in this discussion that $\fod$ does not coincide with the
other $p$ rays.
Let $X$ be the toric variety defined by $\Sigma$, with toric divisors
$D_1,\ldots,D_p,D_{\out}$ corresponding to the above rays. Next, choose
$\ell_i$ general points
on the divisor $D_i$, say labelled $P_{i1},\ldots,P_{i\ell_i}$. 
Let $\nu:\widetilde X\rightarrow X$ be the blow-up of these
$\sum_i \ell_i$ points, with
exceptional divisor $E_{ij}$ over $P_{ij}$. Let $\widetilde D_i,\widetilde D_{\out}$
denote the proper transforms of $D_i,D_{\out}$.

In what follows, we will use the notation ${\bf P}_i=(p_{i1},\cdots,
p_{i\ell_i})$
for a partition of length $\ell_i$ of some non-negative integer $|{\bf P}_i|
=p_{i1}+\cdots+p_{i\ell_i}$, allowing
some of the $p_{ij}$'s to be zero. Fix a class $\beta\in H^2(X,\ZZ)$
with the property that $a_i:=\beta\cdot D_i$ are non-negative
and $k:=\beta\cdot D_{\out}$ is positive.
It is an easy exercise in toric geometry that this implies a relationship
\[
\sum_{i=1}^p a_i m_i=km_{\out},
\]
where $m_{\out}$ is a primitive generator of $\fod$. If one chooses
a collection of partitions ${\bf P}=({\bf P}_1,\ldots,{\bf P}_p)$
where ${\bf P}_i$ is a partition of $a_i$,  
let
\[
\beta_{\bf P}:=\nu^*\beta-\sum_{i=1}^p \sum_{j=1}^{\ell_i} p_{ij}E_{ij}.
\]
This can be thought of as the class of a curve on $X$ which passes through
the point $P_{ij}$ precisely $p_{ij}$ times.

We would now like to associate a number to this cohomology class.
This will be a Gromov-Witten count 
of one-pointed rational curves in $\widetilde X$ which
(1) represent the class
$\beta_{\bf P}$; (2) are tangent to $\widetilde D_{\out}$ at
the marked point with order $k$; and  (3) are otherwise disjoint from any of
the divisors $\widetilde D_i$. 
This is a relative Gromov-Witten invariant.
However, the classical theory of relative Gromov-Witten invariants
works relative to a 
smooth divisor, and of course the union of the boundary divisors
here is singular. One can instead
encode the above conditions using log Gromov-Witten theory.
At the time \cite{GPS} was written, log Gromov-Witten theory
was not yet available, and as a consequence, we used a technical work-around to
reduce to the classical theory. I give this description here since it
does not require knowing log Gromov-Witten theory.

One defines $\widetilde X^o:=\widetilde X\setminus \bigcup_{i=1}^p \widetilde D_i$.
One then considers the moduli space $\foM(\widetilde X^o/\widetilde D^o_{\out},
\beta_{\bf P})$ of relative stable maps of genus
zero with target space $\widetilde X^o$, relative to the divisor
$\widetilde D^o_{\out}=\widetilde D_{\out}\cap \widetilde X^o$. These curves
have one marked point with order of tangency $k$ with $\widetilde D_{\out}^o$.
The only problem is that the target space is non-proper, but one shows
this doesn't cause any problems because nevertheless the moduli space
is proper. One finds it is virtual dimension zero, and since it carries
a virtual fundamental class, we can define
\[
N_{\bf P}:=\int_{[\foM(\widetilde X^o/\widetilde D^o,\beta_{\bf P})]^{vir}} 1
\in\QQ.
\]
We can then state the enumerative result (\cite{GPS}):

\begin{theorem}
\label{mainGPStheorem}
We have
\[
\log f_{\fod}=\sum_{\beta}\sum_{\bf P} k(\beta) N_{\bf P}
t^{\sum_i |{\bf P}_i|} z^{-k(\beta)m_{\out}},
\]
where the sum is over all $\beta\in H^2(X,\ZZ)$ with $\beta\cdot D_i\ge 0$,
$k(\beta):=\beta\cdot D_{\out}>0$, and partitions ${\bf P}$ with 
$|{\bf P}_i|=\beta\cdot D_i$.
\end{theorem}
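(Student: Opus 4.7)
The plan is to prove the identity by establishing both sides satisfy the same inductive rule, based on the Kontsevich--Soibelman construction from Proposition \ref{KSLemma}. Order by order in $t$, the function $f_{\fod}$ is uniquely determined by the requirement that $\theta_{\gamma,\Scatter(\foD)}=\id$ around a loop encircling the origin. So the strategy is to show that the candidate expression
\[
F_{\fod}:=\exp\Biggl(\sum_{\beta,{\bf P}} k(\beta)\,N_{{\bf P}}\,t^{\sum_i|{\bf P}_i|}z^{-k(\beta)m_{\out}}\Biggr)
\]
assembled from the relative Gromov--Witten invariants, together with $\prod_i(1+tz^{-m_i})^{\ell_i}$, cancel out to give the identity automorphism when composed around such a loop.

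First, I would construct an algebro-geometric model that unites the combinatorial and enumerative sides. Build a toric degeneration $\shX\rightarrow D$ whose central fibre has irreducible components that are naturally indexed by the maximal cones of the fan $\Sigma$ underlying $X$. The blow-up of $\widetilde X\rightarrow X$ at the points $P_{ij}\in D_i$ should be arranged so that under this degeneration, the points $P_{ij}$ degenerate onto the codimension-one stratum corresponding to $D_i$. Then a relative stable map to $\widetilde X^o/\widetilde D^o_{\out}$ degenerates into a broken curve whose tropicalization is a genus-zero tropical curve in $M_{\RR}$, with unbounded edges in the directions $m_i$ (weighted by the parts of ${\bf P}_i$) and a single unbounded outgoing edge in direction $m_{\out}$ of weight $k(\beta)$. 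The balancing condition $\sum_i a_i m_i = k m_{\out}$ is precisely the one forced by the class constraint, so the two pictures match.

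Next, I would apply the degeneration formula for relative Gromov--Witten invariants (equivalently, the log-geometric gluing formula sketched in \S\ref{Amodelsection}): the invariant $N_{\bf P}$ decomposes as a sum over rigid tropical curves in $M_{\RR}$ of contributions from the vertices, weighted by edge multiplicities. The vertex contributions are themselves simple relative Gromov--Witten invariants of weighted projective line bundles, which one can compute using the classical multiple-cover formula for rational curves meeting a divisor with prescribed tangency. It is here that the partitions ${\bf P}_i$ enter naturally: each $P_{ij}$ contributes an unbounded edge of weight $p_{ij}$, so partitions precisely parameterize the ways the curve can ``absorb'' the blow-up point through multiple covers. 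The factors $(1+tz^{-m_i})^{\ell_i}$ on the input rays are exactly the generating functions that encode these contributions at the input legs.

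Finally, I would match the generating functions. The logarithm of an automorphism $\theta_{(a,b),f}$ in the tropical vertex group lives in a Lie algebra whose bracket mirrors the tropical intersection pairing of edges at a vertex of a tropical curve; this is precisely the combinatorial structure that arises from Jun Li's (or the log-geometric) degeneration formula. Checking that the multiplicities coming from the deformation theory of log stable maps at a trivalent vertex agree with those produced by the Baker--Campbell--Hausdorff expansion in $\VV$ should reduce, by induction on the order in $t$, to the compatibility statement at a single vertex; this is the heart of the argument, and the main obstacle. The $k(\beta)$ prefactor in the theorem is the tangency order at $\widetilde D_{\out}$, which appears because a relative stable map of tangency $k$ over a fixed point contributes with an additional factor of $k$ in the degeneration formula. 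Once the vertex-by-vertex matching is in place, induction closes the argument: at each new order in $t$, both sides satisfy the same recursion mandated by the Kontsevich--Soibelman algorithm, with the same initial data, hence they agree.
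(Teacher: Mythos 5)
Your sketch captures the right flavor — tropical curve counting, the Nishinou--Siebert correspondence, Jun Li's degeneration formula — but the logical skeleton you propose does not match what is actually done in \cite{GPS}, and the places where it diverges are exactly where the hard work lives.

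The actual proof is a computation, not a recursion-matching argument. The first and indispensable step, which your proposal omits entirely, is to perturb the scattering diagram: each line $(\RR m_i, (1+tz^{-m_i})^{\ell_i})$ is replaced by $\ell_i$ parallel lines with simple functions $(1+t_{ij}z^{-m_i})$ in general position, which pulls the scattering apart so that the union of rays in the consistent completion becomes a genuine union of tropical curves in $M_\RR$, with the lines playing the role of point constraints. Only after this perturbation does the identification ``ray multiplicity $=$ tropical vertex multiplicity'' become a theorem rather than a heuristic, and only then can Nishinou--Siebert be applied to convert tropical counts into holomorphic relative Gromov--Witten counts on toric surfaces. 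Without the perturbation, the functions $(1+tz^{-m_i})^{\ell_i}$ scatter in an entangled way that cannot be disassembled term by term into tropical curves. Your sentence ``the factors $(1+tz^{-m_i})^{\ell_i}$ on the input rays are exactly the generating functions that encode these contributions at the input legs'' is asserting the conclusion where the actual argument requires the perturbation to make it true.

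Second, you conflate two distinct degenerations. One degeneration (à la Nishinou--Siebert, governed by a polyhedral decomposition of $M_\RR$ adapted to a fixed tropical curve, \emph{not} by the cones of $\Sigma$) relates tropical curve counts to log Gromov--Witten invariants of the \emph{unblown-up} toric surface $X$ with incidence and tangency conditions along $D_1,\ldots,D_p,D_{\out}$. A second, entirely separate degeneration — this one of $\widetilde X$, bubbling the exceptional divisors off over the $D_i$ — is what converts those counts into the invariants $N_{\bf P}$ of the blowup, via Jun Li's gluing formula; this is also where the $(-1)^{d+1}/d^2$ multiple-cover contributions are isolated and computed by localization. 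Your proposal merges these into a single degeneration with components ``indexed by the maximal cones of the fan $\Sigma$,'' which is not a construction that exists in the argument.

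Third, and most importantly, the framing ``establish both sides satisfy the same recursion mandated by the Kontsevich--Soibelman algorithm'' describes a proof strategy that would require proving a WDVV-type recursion directly for the invariants $N_{\bf P}$ that mirrors, order by order, the Baker--Campbell--Hausdorff defect in $\VV$. You yourself flag this vertex-by-vertex matching as ``the heart of the argument, and the main obstacle'' — and indeed it would be, but \cite{GPS} never attempts it. They sidestep it by the tropical bridge: the correspondence theorem converts the problem of matching Lie-bracket multiplicities with gluing-formula multiplicities into the much more tractable problem of matching tropical vertex multiplicities with scattering multiplicities, which is an elementary determinant computation. So what you identify as the main obstacle is real, it just is not the path the proof takes; the path taken exists precisely to avoid it.
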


\begin{example}
Returning to Example \ref{scatdiagexample}, consider the function $f_{\fod}$
attached to the ray of slope $1$ for the cases $\ell=1,2$ and $3$. In each
case, the surface $X$ is $\PP^2$, with coordinate axes $D_1,D_2$ and $D_{\out}$.
Then $\widetilde X$ is obtained by blowing up $\ell$ points on each of $D_1$ and $D_2$.

Considering first the case of $\ell=1$, we note that for $\beta=dH$, the class
of a degree $d$ curve in $\PP^2$, the only relevant choice of ${\bf P}$
is ${\bf P}_1=d$, ${\bf P}_2=d$, and thus we have
\[
\beta_{\bf P}=d\nu^* H - d E_{11}-d E_{21}.
\]
This represents the class of a curve of degree $d$ passing through
the two blown-up points $d$ times each. It is easy to see that the only
choice for such a curve is a $d$-fold cover of a line passing through
the two points. Furthermore, this cover must be totally ramified over
$D_{\out}$ to guarantee the required order of tangency with $D_{\out}$.
This requires a virtual count, and the relevant localization calculations
are carried out in \cite{GPS}, giving a value of $N_{{\bf P}}=(-1)^{d+1}/d^2$.
Thus we get
\[
\log f_{\fod}=\sum_{d=1}^{\infty} d \left({(-1)^{d+1}\over d^2}\right) t^{2d}x^{-d}y^{-d}.
\]
Exponentiating one finds $f_{\fod}=1+t^2x^{-1}y^{-1}$, agreeing with Example 
\ref{scatdiagexample}. So here we are just counting the one line through two
points in $\PP^2$ along with certain multiple covers of this line.

Going to $\ell=2$, and $\beta=dH$, one finds four choices for the partition
in the case $d=1$,
${\bf P}=(1+0,1+0), (1+0,0+1), (0+1,1+0)$, and $(0+1,0+1)$. Each corresponds
to a choice of one point on each of $D_1$, $D_2$, and one has one line
through each of these pairs of points. Thus $N_{\bf P}=1$ for each choice
of such ${\bf P}$. As in the case $\ell=1$, each of these lines also
contributes to higher degree via multiple covers, with, say, ${\bf P}=(d+0,d+0)$
contributing $N_{\bf P}=(-1)^{d+1}/d^2$. For $d=2$, one sees there are no
curves for ${\bf P}=(2+0,1+1)$, say, as this would require a conic with 
a node on $D_1$ and tangent to $D_{\out}$; such does not exist. But with
${\bf P}=(1+1,1+1)$, we look at conics passing through all four points
and tangent to $D_{\out}$. It is very easy to see there are two such
conics. 

One can then check that the only other curves contributing are multiple
covers of one of the four lines or two conics. The multiple
cover contribution for conics is actually different than for lines,
because the order of tangency with $D_{\out}$ is different. It turns
out the correct contribution for a $d$-fold cover of a conic is $1/d^2$.
Hence we find
\[
\log f_{\fod}=4\sum_{d=1}^{\infty} d\left({(-1)^{d+1}\over d^2}\right)t^{2d}x^{-d}y^{-d}
+2\sum_{d=1}^{\infty} 2d\left(1\over d^2\right)t^{4d}x^{-2d}y^{-2d}
\]
and exponentiating we get
\[
f_{\fod}={(1+t^2xy)^4\over (1-t^4x^{-2}y^{-2})^4}=(1-t^2x^{-1}y^{-1})^{-4}.
\]

In the case that $\ell=3$, one expects $3\times 3=9$ lines, as there is
one line passing through each pair of choices of one point on $D_1$ and one
point on $D_2$. For conics, one has double covers of these lines, for a contribution
of $-9/4$, and $2\times 3\times 3=18$ conics. Here one needs to choose two
points on $D_1$ and two points on $D_2$, and then there are two conics
passing through these four points tangent to $D_{\out}$.

For cubics, there is the contribution of triple covers of lines, for a total
of $9/9$, and a number of contributions from plane cubics. It turns out
that for ${\bf P}=(1+1+1,1+1+1)$, $N_{\bf P}=18$. Note this gives
a count of nodal plane cubics passing through $6$ fixed points and
for which $D_{\out}$ is a tri-tangent. On the other hand, for 
${\bf P}=(1+2+0,1+1+1)$, $N_{\bf P}=3$. Note that there are a total
of $12$ partitions of this shape. This latter count represents nodal cubics
with the node at one of the chosen points, passing also through four
other chosen points, with $D_{\out}$ being tritangent. One concludes that 
\[
\log f_{\fod}=9 t^2x^{-1}y^{-1}+2(-9/4+18)t^4x^{-2}y^{-2}+3(9/9+54)t^6x^{-3}y^{-3}+
\cdots.
\]
A direct comparision with the value given in Example \ref{scatdiagexample}
gives agreement.
\end{example}

We end this section with brief additional 
motivation for Theorem \ref{mainGPStheorem} and a word about the proof. 

Suppose we have a piece of an integral affine
manifold as depicted in Figure \ref{affinemanpiece}. Here we imagine a
situation with two singular points in a surface, with local monodromy
around the singularities contained in the horizontal and vertical line
segments being $\begin{pmatrix}1&\ell_1\\ 0&1\end{pmatrix}$
and $\begin{pmatrix}1&\ell_2\\ 0&1\end{pmatrix}$ in suitably chosen
bases (different for each segment). This is a slightly more general 
situation than was considered in \S\ref{Bmodelsect}, where we only discussed
singularities with monodromy of the form 
$\begin{pmatrix}1&1\\ 0&1\end{pmatrix}$. Nevertheless, the techniques
of that section still apply, but the functions attached to the initial
rays emanating from the singularities towards the central vertex $v$
can be taken to be of the form $(1+x^{-1})^{\ell_1}$ and $(1+y^{-1})^{\ell_2}$.
This is roughly the shape of the examples discussed above. Applying the
scattering procedure would then produce a smoothing of $\check X_0(B,\P,s)$.
However, on the mirror side, we interpret $B$ as a dual intersection complex,
which means it should arise from a degeneration $\shX\rightarrow D$
where the central fibre $\shX_0$ has an irreducible component $Y_v$ isomorphic
to $\PP^2$ (corresponding to the vertex $v$). Furthermore, the total
space $\shX$ should have $\ell_1+\ell_2$ ordinary double points lying
on the toric boundary of $Y_v$. If one blows up the 
Weil divisor $Y_v$ inside of $\shX$, one obtains a small resolution
$\widetilde\shX\rightarrow \shX$
of these ordinary double points, and in particular, the proper transform
$\widetilde Y_v$ of $Y_v$ is the blow-up of $Y_v$ at the points
$Y_v\cap \Sing(\shX)$. This operation 
blows up $\ell_1$ points on one coordinate axis
of $Y_v$ and $\ell_2$ on the other. This is exactly the same surface
considered in Theorem \ref{mainGPStheorem}.

Now consider the kind of curves on $\tilde Y_v$ counted by Theorem 
\ref{mainGPStheorem}. 
These are curves in $\widetilde Y_v$ which only intersect the third coordinate
axis at one point. These can be viewed as curves in $\widetilde\shX_0$,
but not ones which deform to holomorphic curves in a general fibre
of the family $\widetilde\shX\rightarrow D$. Rather, roughly, we expect
such curves to deform to holomorphic disks, with the point of intersection
with the singular locus of $\widetilde\shX_0$ (i.e., the point of intersection
with the third axis of $\widetilde Y_v$) expanding into an $S^1$, giving
the boundary of the holomorphic disk. Approximately, we expect this boundary
to lie in a fibre of an SYZ fibration on a general fibre of the family
$\widetilde\shX\rightarrow D$. The homology class of this boundary inside
the fibre is determined by the order of tangency of the curve with the
third axis. 

This correspondence between the relative curves considered in Theorem
\ref{mainGPStheorem} is only a moral one; there is no proof yet that we
are really counting such holomorphic disks. However, this argument served
as the primary motivation for Theorem \ref{mainGPStheorem}.

Finally, as far as the proof is concerned, there are several steps. First,
we show that scattering diagrams can be deformed to look like a union
of tropical curves, and use a variant of Mikhalkin's fundamental curve-counting
results \cite{Mik} as developed by Nishinou and Siebert
\cite{NS} to show that scattering diagrams perform certain curve counts
on toric surfaces. This is then related to the Gromov-Witten counts
of the blown-up surfaces using Jun Li's gluing formula \cite{Li2}.

\begin{figure}
\input{affinemanpiece.pstex_t}
\caption{}
\label{affinemanpiece}
\end{figure}

\section{Other recent results and the future}

I will close with a brief discussion of applications and future
developments of the methods discussed here.

Recently a variant of the smoothing mechanism described here was
used by myself, Hacking and Keel \cite{GHK} to give a very general construction
of mirrors of pairs $(Y,D)$ where $Y$ is a rational surface and $D$
is an effective anti-canonical divisor forming a cycle of rational curves.
We make use of \cite{GPS} to write down what we call the \emph{canonical
scattering diagram}, which can be described entirely in terms of the
Gromov-Witten theory of the pair $(Y,D)$ (and more specifically, counts
of curves intersecting $D$ at only one point). This scattering diagram
determines the mirror family. However, there is an additional crucial
tool used to partially compactify the family constructed. This is necessary
because unlike the affine manifolds considered in this paper, the 
natural one to associate to the pair $(Y,D)$ has a singularity at a vertex
of the polyhedral decomposition. There is no local model for a smoothing
at this vertex, and as a consequence, one constructs families which are
``missing'' a point. To add this point back, one needs to be sure there
are enough functions on the family constructed, and it turns out
homological mirror symmetry suggests a natural way to construct such
functions. This can be done tropically, creating what we call \emph{theta
functions}. The same construction applied to the case of degenerating
abelian varieties indeed produces ordinary theta functions, and we anticipate
the functions we construct in these other contexts will be similarly
useful. See \cite{GStheta} for a survey of these ideas.

The construction of \cite{GHK} then also solves a problem which pre-dates
mirror symmetry. In particular, we prove a conjecture of
Looijenga concerning smoothability of cusp singularities.

Theta functions can be viewed as canonical bases for rings of functions
on an affine variety or spaces of sections of line bundles on projective
varieties. As such, they make contact with canonical bases in cluster
algebra theory, providing a framework for constructing canonical bases
of cluster algebras. 

We also expect that the techniques for surface pairs $(Y,D)$ will generalize.
Indeed, a mirror partner to any maximally unipotent normal crossings
degeneration of K3 surfaces can be constructed along similar lines,
in work in progress with Hacking, Keel and Siebert. The expectation is
that with an additional helping of log Gromov-Witten theory, one should
be able write down a general construction in all dimensions for
mirror partners to maximally unipotent degenerations of Calabi-Yau
manifolds.

There still remains the question of extracting enumerative information
from periods which provided the original excitement in mirror symmetry. Here
we showed how enumerative geometry can be reflected in the mirror, but
in a rather local way. We expect that it should be possible to carry out the
computation of period integrals to extract genus zero Gromov-Witten invariants
of the mirror, but some technical issues remain in this direction. Nevertheless,
the program of understanding mirror symmetry via degenerations, inspired
by the SYZ conjecture, seems to provide a powerful framework of thinking
about mirror symmetry inside the realm of algebraic and tropical geometry.


\begin{thebibliography}{cccccc}

\bibitem{AC} D.~Abromovich, Q.~Chen: \emph{Stable logarithmic maps to
Deligne--Faltings pairs II,} preprint, 2011.

\bibitem{Amari} S.~Amari: \emph{Differential-geometric methods in statistics},
Lecture Notes in Statistics, {\bf 28}, Springer-Verlag, 1985.

\bibitem{Clay} P.~Aspinwall, T.~Bridgeland, A.~Craw, M.~Douglas, 
M.~Gross, A.~Kapustin, G.~Moore, G.~Segal, B.~Szendroi, P.~Wilson, 
Dirichlet branes and mirror symmetry. 
Clay Mathematics Monographs, 4. American Mathematical Society, Providence, RI;
Clay Mathematics Institute, Cambridge, MA, 2009. x+681 pp.

\bibitem{Aur} D.~Auroux: \emph{Mirror symmetry and $T$-duality in the
complement of an anticanonical divisor}, J.\ G\"okova Geom.\ Topol.
GGT {\bf 1} (2007), 51--91.

\bibitem{Bat} V.~Batyrev: \emph{Dual polyhedra and mirror symmetry 
for Calabi-Yau hypersurfaces in toric varieties.}
J. Algebraic Geom.  {\bf 3}  (1994), 493--535.

\bibitem{BB} V.~Batyrev, and L.~Borisov: \emph{On Calabi-Yau complete
intersections in toric varieties,}
in {\it Higher-dimensional complex varieties (Trento, 1994)}, 39--65,
de Gruyter, Berlin, 1996.

\bibitem{Oren} O.~Ben-Bassat, \emph{Mirror symmetry and generalized
complex manifolds}, preprint, 2004, {\tt math.AG/0405303}.

\bibitem{CastMat} R.~Casta\~no-Bernard and D.~Matessi, \emph{Lagrangian
3-torus fibration}, J.\ Differential Geom., {\bf 81} (2009), 483--573.

\bibitem{CLS} P.~Candelas, M.~Lynker, R.~Schimmrigk,
\emph{Calabi-Yau manifolds in weighted ${\bf P}\sb 4$,}
Nuclear Phys.\ B {\bf 341} (1990), 383--402.

\bibitem{COGP} P.~Candelas, X.~de la Ossa, P.~Green, and L.~Parkes,
\emph{A pair of Calabi-Yau manifolds as an exactly soluble superconformal 
theory,}  Nuclear Phys. B  {\bf 359}  (1991), 21--74.

\bibitem{Chen} Q.~Chen: \emph{Stable logarithmic maps to Deligne--Faltings
pairs I,} preprint, 2010.

\bibitem{ChengYau} S.-Y.~Cheng and S.-T.~Yau, \emph{The real Monge-Amp\`ere 
equation and affine flat structures},
in \emph{Proceedings of the 1980 Beijing Symposium on Differential 
Geometry and Differential Equations}, Vol. 1, 2, 3 (Beijing, 1980),
339--370, Science Press, Beijing, 1982. 

\bibitem{Fukaya} K.~Fukaya, \emph{Multivalued Morse theory, 
asymptotic analysis and mirror symmetry,} in \emph{Graphs and patterns 
in mathematics and theoretical physics}, 205--278, 
Proc. Sympos. Pure Math., {\bf 73}, Amer. Math. Soc., Providence, RI, 2005.

\bibitem{GMN} D.~Gaiotto, G.~Moore, A.~Neitzke:
\emph{Four-dimensional wall-crossing via three-dimensional field theory,}
Comm.\ Math.\ Phys.\ {\bf 299} (2010), 163--224.
 
\bibitem{Givental} A.~Givental, \emph{Equivariant Gromov-Witten invariants,}
Internat. Math. Res. Notices {\bf 13}, (1996), 613--663. 

\bibitem{Gold} E.~Goldstein: \emph{A construction of new families of 
minimal Lagrangian submanifolds via torus actions,}  J. Differential Geom.  
{\bf 58} (2001),  233--261.

\bibitem{GrPl} B.~Greene and M.~Plesser, \emph{Duality in Calabi-Yau moduli 
space,} Nuclear Phys.\ B  {\bf 338}  (1990), 15--37.

\bibitem{SlagI} M.~Gross:
\emph{Special Lagrangian Fibrations I: Topology,}
in: {\sl Integrable Systems and Algebraic Geometry},
(M.-H.\ Saito, Y.\ Shimizu and K.\ Ueno eds.),
World Scientific 1998, 156--193.

\bibitem{SlagII} M.~Gross:
\emph{Special Lagrangian Fibrations II: Geometry,}
in: {\sl Surveys in Differential Geometry}, Somerville: MA,
International Press 1999, 341--403.

\bibitem{TMS} M.~Gross:
\emph{Topological Mirror Symmetry},
Invent.\ Math.\ {\bf 144} (2001), 75--137.

\bibitem{SLAGex} M.~Gross: \emph{Examples of special Lagrangian fibrations,}
in \emph{Symplectic geometry and mirror symmetry (Seoul, 2000)}, 81--109, 
World Sci. Publishing, River Edge, NJ, 2001.

\bibitem{GBB} M.~Gross:
\emph{Toric Degenerations and Batyrev-Borisov Duality},
Math. Ann. {\bf 333}, (2005) 645-688.

\bibitem{GAMS} M.~Gross: \emph{The Strominger-Yau-Zaslow conjecture: 
from torus fibrations to degenerations,}  
Algebraic geometry—Seattle 2005. Part 1,  149--192, Proc. Sympos. Pure Math., 
{\bf 80}, Part 1, Amer. Math. Soc., Providence, RI, 2009. 

\bibitem{GP2} M.~Gross: \emph{Mirror symmetry for $\PP^2$ and tropical
geometry}, Adv.\ Math., {\bf 224} (2010), 169--245.

\bibitem{Gbook} M.~Gross: \emph{Tropical geometry and mirror symmetry},
CBMS Regional Conference Series in Mathematics, {\bf 114}. American Mathematical
Society, Providencem RI, 2011. xvi+317 pp.

\bibitem{GHK} M.~Gross, P.~Hacking, S.~Keel: \emph{Mirror symmetry for log
Calabi-Yau surfaces I}, preprint, 2011.

\bibitem{GHKK} M.~Gross, P.~Hacking, S.~Keel, M.~Kontsevich: \emph{Mirror
symmetry and cluster algebras}, in preparation.

\bibitem{GPS} M.~Gross, R.~Pandharipande, B.~Siebert:
\emph{The tropical vertex}, Duke Math. J.  {\bf 153} (2010), 297--362. 

\bibitem{Announce} M.~Gross, and B.~Siebert:
\emph{Affine manifolds,  log structures, and mirror symmetry},
Turkish J.\ Math.\ {\bf 27} (2003), 33-60.

\bibitem{tori} M.~Gross, and B.~Siebert:
\emph{Torus fibrations and toric degenerations,}
in preparation.

\bibitem{PartI} M.~Gross, and B.~Siebert:
\emph{Mirror symmetry via logarithmic degeneration data I},
J. Diff. Geom., {\bf 72}, (2006) 169--338.

\bibitem{PartII} M.~Gross, and B.~Siebert:
\emph{Mirror symmetry via logarithmic degeneration data II},
J. Algebraic Geom., {\bf 19}, (2010) 679--780

\bibitem{Annals} M.~Gross and B.~Siebert: \emph{From affine
geoemtry to complex geometry}, \emph{Annals of Mathematics}, 
{\bf 174}, (2011), 1301-1428.

\bibitem{JAMS} M.~Gross and B.~Siebert: \emph{Logarithmic
Gromov-Witten invariants}, preprint, 2011, to appear in
J.\ of the AMS.

\bibitem{GStheta} M.~Gross and B.~Siebert: \emph{Theta functions and
mirror symmetry}, preprint, 2011.

\bibitem{GTZ} M.~Gross, V.~Tosatti, Y.~Zhang: \emph{Collapsing
of abelian fibred Calabi-Yau manifolds}, preprint, 2011. To appear in
Duke Math. J.

\bibitem{GrWiBV} M.~Gross, and P.M.H.~Wilson: \emph{Mirror symmetry via $3$-tori
for a class of Calabi-Yau threefolds,}  Math. Ann. {\bf 309} (1997),  505--531.

\bibitem{GrWi} M.~Gross, and P.M.H.~Wilson: \emph{Large complex structure 
limits of $K3$ surfaces,}  J. Differential Geom. {\bf 55} (2000), 475--546.

\bibitem{Gual} M.~Gualtieri, \emph{Generalized complex geometry},
Oxford University DPhil thesis, {\tt math.DG/0401221}.

\bibitem{HZ} C.~Haase, and I.~Zharkov:
\emph{Integral affine structures on spheres and torus fibrations of
Calabi-Yau toric hypersurfaces I},
preprint 2002, {\tt math.AG/0205321}.

\bibitem{HZ3} C.~Haase, and I.~Zharkov:
\emph{Integral affine structures on spheres III: complete intersections},
preprint, {\tt math.AG/0504181}.

\bibitem{Hit} N.~Hitchin: 
\emph{The Moduli Space of Special Lagrangian Submanifolds},
Ann.\ Scuola Norm.\ Sup.\ Pisa Cl.\ Sci.\ (4) {\bf 25} (1997), 503--515.

\bibitem{HitGen} N.~Hitchin: \emph{Generalized Calabi-Yau manifolds},
Q. J. Math. {\bf 54}  (2003),  281--308. 

\bibitem{Huy} D.~Huybrechts: \emph{Generalized Calabi-Yau structures, 
$K3$ surfaces, and $B$-fields,} Internat. J. Math. {\bf 16}  (2005),  13--36.

\bibitem{Illu}
L.\ Illusie: \emph{Logarithmic spaces (according to K. Kato)},
in {\sl Barsotti Symposium in Algebraic Geometry} (Abano Terme 1991),
183--203, Perspect.\ Math.\ 15, Academic Press 1994.

\bibitem{IP1} E.~Ionel, T.~Parker, \emph{Relative Gromov-Witten invariants,}
Ann.\ of Math.\ (2) {\bf 157} (2003),  45--96.

\bibitem{IP2} E.~Ionel, T.~Parker, \emph{The symplectic sum formula for 
Gromov-Witten invariants},  Ann.\ of Math.\ (2)  {\bf 159}  (2004), 
935--1025.

\bibitem{Joyce} D.~Joyce, \emph{Singularities of special Lagrangian 
fibrations and the SYZ conjecture,}  Comm. Anal. Geom. {\bf 11}  (2003),
859--907.

\bibitem{F.Kato} F.~Kato:
\emph{Log smooth deformation theory},
Tohoku Math.\ J.\ {\bf 48} (1996), 317--354.

\bibitem{K.Kato} K.~Kato:
\emph{Logarithmic structures of Fontaine--Illusie},
in: {\sl Algebraic analysis, geometry, and number theory}
(J.-I.~Igusa et.~al.\ eds.), 191--224,
Johns Hopkins Univ.~Press, Baltimore, 1989.

\bibitem{KN} Y.~Kawamata, Y.~Namikawa:
\emph{Logarithmic deformations of normal crossing varieties
and smooothing of degenerate Calabi-Yau varieties},
Invent.\ Math.\ {\bf 118} (1994), 395--409.

\bibitem{KHMS} M.~Kontsevich: \emph{Homological algebra of mirror symmetry},
Proceedings of the International Congress of Mathematicians,
Vol.\ 1, 2 (Z\"urich, 1994), 120--139,
Birkh\"auser, Basel, 1995.

\bibitem{KS} M.~Kontsevich, and Y.~Soibelman: 
\emph{Homological mirror symmetry and torus fibrations},
in: {\sl Symplectic geometry and mirror symmetry} (Seoul, 2000), 203--263, 
World Sci.\ Publishing, River Edge, NJ, 2001. 

\bibitem{KS2} M.~Kontsevich, and Y.~Soibelman: 
\emph{Affine structures and non-archimedean analytic spaces},
The unity of mathematics,  321--385, 
Progr.\ Math., 244, Birkh\"auser Boston, Boston, MA, 2006. 

\bibitem{Leung} N.C.~Leung:
\emph{Mirror symmetry without corrections},
Comm.\ Anal.\ Geom.\ {\bf 13} (2005),  287--331.

\bibitem{LV} N.C.~Leung, C.~Vafa: \emph{Branes and toric geometry},
Adv.\ Theor.\ Math.\ Phys.\ {\bf 2} (1998), 91--118.

\bibitem{Li2} J.~Li: \emph{A degeneration formula of GW-invariants,}
J.\ Differential Geom.\ {\bf 60}  (2002), 199--293.

\bibitem{LR} A.-M.~Li, Y.~Ruan: \emph{Symplectic surgery and Gromov-Witten
invariants of Calabi-Yau 3-folds}, Invent.\ Math., {\bf 145} (2001), 151--218.

\bibitem{LLY} B.~Lian, K.~Liu, S-T.~Yau,
\emph{Mirror principle. I,} Asian J. Math. {\bf 1} (1997), 729--763. 

\bibitem{Matessi} D.~Matessi, \emph{Some families of special Lagrangian tori,}
Math. Ann. {\bf 325} (2003), 211--228.

\bibitem{McLean} R.~McLean, \emph{Deformations of calibrated submanifolds,}
Comm. Anal. Geom. {\bf 6} (1998), 705--747.

\bibitem{Mik} G.~Mikhalkin, \emph{Enumerative tropical algebraic
 geometry in $\RR^2$,}
J. Amer. Math. Soc. {\bf 18} (2005), 313--377.

\bibitem{Morr} D.~Morrison, \emph{Compactifications of moduli spaces
inspired by mirror symmetry,}
in \emph{Journ\'ees de G\'eom\'etrie Alg\'ebrique d'Orsay (Orsay, 1992)},
Astérisque {\bf 218} (1993), 243--271.

\bibitem{NS} T.~Nishinou, B.~Siebert: \emph{Toric degenerations
of toric varieties and tropical curves}, Duke Math.\ J., {\bf 135} (2006),
1--51.

\bibitem{Ols} M.~Olsson: \emph{Log algebraic stacks and moduli of log schemes,}
Ph.D.\ thesis, Berkeley.

\bibitem{Park1} B.~Parker: \emph{Exploded manifolds}, Adv.\ in Math., 
{\bf 229} (2012), 3256--3319.

\bibitem{Park2} B.~Parker: \emph{Gromov-Witten invariants of exploded
manifolds}, preprint, 2011.

\bibitem{Petersen} P.~Petersen: \emph{Riemannian geometry,}
Graduate Texts in Mathematics, 171. Springer-Verlag, New York, 1998.

\bibitem{Rei} M.~Reineke: \emph{Poisson automorphisms and quiver
moduli}, J.\ Inst.\ Math.\ Jussieu {\bf 9}, (2010), 653--667.

\bibitem{Ruan} W.-D.~Ruan:
\emph{Lagrangian torus fibration and mirror symmetry of Calabi-Yau
hypersurface in toric variety},
preprint 2000, math.DG/0007028.

\bibitem{RuanJSG} W.-D.~Ruan: \emph{Lagrangian torus fibration of quintic 
Calabi-Yau hypersurfaces. II. Technical results on gradient flow construction,}
J. Symplectic Geom.  {\bf 1}  (2002),  no. 3, 435--521.

\bibitem{ssKod} S.\ Schr\"oer, B.\ Siebert: \emph{Irreducible 
degenerations of primary Kodaira surfaces,} in Complex geometry 
(G\"ottingen, 2000),  193--222, Springer, Berlin, 2002.

\bibitem{ss} S.\ Schr\"oer, B.\ Siebert:
\emph{Toroidal crossings and logarithmic structures},
Adv.\ Math.\ {\bf 202} (2006), 189--231. 

%\bibitem{ShimaYagi} H.~Shima, K.~Yagi, \emph{Geometry of Hessian
%Manifolds,} Differential Geom.\ Appl., {\bf 7}, (1997), 277--290.

\bibitem{STalk} B.~Siebert: \emph{Gromov-Witten invariants in relative and
singular cases.} Lecture given at the workshop ``Algebraic aspects of
mirror symmetry,'' Univ.\ Kaiserslautern, Germany, June 2001.

\bibitem{SYZ} A.\ Strominger, S.-T.\ Yau, and E.~Zaslow, \emph{Mirror Symmetry
is $T$-duality,} Nucl.\ Phys.\ {\bf B479}, (1996) 243--259.

\bibitem{Tos} V.~Tosatti, \emph{Adiabatic limits of Ricci-flat
K\"ahler metrics,} J.\ Differential Geom., {\bf 84} (2010), 427--453.

\bibitem{Yau} S.-T.~Yau: \emph{On the Ricci curvature of a compact
K\"ahler manifold and the complex Monge-Amp\`ere equation. I,}
Comm.\ Pure Appl.\ Math., {\bf 31} (1978), 339--411.

\bibitem{Zhang} Y.~Zhang: \emph{Collapsing of Calabi-Yau manifolds and
special Lagrangian submanifolds,} preprint, 2009.
\end{thebibliography}
\end{document}